\documentclass[twoside]{amsart}
\usepackage[utf8]{inputenc}
\usepackage[english]{babel}
\usepackage{amssymb,amsmath,amsthm}
\usepackage{graphicx}
\usepackage[small,nohug,heads=vee]{diagrams}
\diagramstyle[labelstyle=\scriptstyle]
\usepackage{xspace}
\usepackage{enumerate}
\usepackage{geometry} 
\geometry{left=2.5cm}
\geometry{right=2cm}
\geometry{top=2cm}
\geometry{bottom=2cm}

\newcommand{\F}[1]{\ensuremath{\mathbb{F}_{#1}}}

\newcommand{\QQ}{\ensuremath{\mathbb{Q}}\xspace}

\newcommand{\ZZ}{\ensuremath{\mathbb{Z}}\xspace}
\newcommand{\Zp}{\ensuremath{\mathbb{Z}_{(p)}}\xspace}
\newcommand{\NN}{\ensuremath{\mathbb{N}}\xspace}

\newcommand{\rarr}{\rightarrow}

\newcommand{\xrarr}[1]{\xrightarrow{#1}}

\newcommand{\id}{\ensuremath{\mbox{id}}}

\newcommand{\ot}{\otimes}

\theoremstyle{definition}
\newtheorem{Def}{Definition}[section]
\newtheorem{Rk}[Def]{Remark}
\newtheorem*{Exl}{Example}
\theoremstyle{plain}
\newtheorem{Th}[Def]{Theorem}
\newtheorem{Prop}[Def]{Proposition}
\newtheorem{Cr}[Def]{Corollary}
\newtheorem{Lm}[Def]{Lemma}
\newtheorem*{BigTh}{Classification of Operations Theorem  (COT)}
\newtheorem*{BigTh-add}{Algebraic Classification of Additive Operations Theorem  (CAOT)}

%

\begin{document}

\title{Chern classes from Morava K-theories to Chow groups}
\author{Pavel Sechin}
\address{National Research University Higher School of Economics,  Russian Federation}
\email{pasechnik771@gmail.com}
\date{}

\begin{abstract}
In this paper we calculate the ring of unstable (possibly non-additive) operations
from algebraic Morava K-theory $K(n)^*$ to Chow groups with $\Zp$-coefficients.
More precisely, we prove that it is a formal power series ring on generators $c_i:K(n)^*\rarr CH^i\ot\Zp$,
which satisfy a Cartan-type formula.
\end{abstract}

\maketitle

\section*{Introduction}

(Complex) orientable cohomology theories appeared in algebraic topology as rigid examples
of generalized cohomology theories. The very notion of orientability is 
strongly connected to the notion of Chern classes 
of complex vector bundles by the following.
Loosely speaking, orientable theories are those theories for which all invariants
of vector bundles (aka operations from K-theory) 
are expressed in terms of their Chern classes
and all universal relations between these invariants come from relations between vector bundles.

In algebraic geometry the notion of an orientable cohomology theory did not seem to appear
until the fundamental work of F.Morel and V.Voevodsky on the motivic homotopy theory.
The setting developed by them allowed geometers 
to 'borrow' many notions and constructions from topology
in a clear conceptual (but technically uneasy) way. In particular, V.Voevodsky
has performed a motivic construction of the Thom spectrum to introduce
the theory of algebraic cobordisms $MGL^{*,*}$ which was later proved to be
the universal orientable theory.

The notion of orientability of more general cohomology theories than those
which are
representable in the stable motivic homotopy category
was studied by I.Panin and A.Smirnov. They observed that 
orientability has three equivalent avatars: it can be specified either by Thom classes of line bundles,
Chern classes of line bundles or pushforward maps for proper morphisms.
Perhaps, it shows crucial importance of Chern classes 
in defining orientable theories.

While the study of oriented motivic spectra is not easy,
any motivic-representable cohomology theory 
has a {\it pure} part, sometimes referred to as a {\it small} theory 
as opposite to the whole {\it big} theory. 
Small theories are presheaves of graded  rings which
often allow for a more geometric description in comparison with big theories. 
For example, the pure part of motivic cohomology is Chow groups, and
the pure part of algebraic K-theory is the Grothendieck group of vector bundles 
$K_0[\beta,\beta^{-1}]$.

The pure part of algebraic cobordisms $MGL^{*,*}$ has been developed 
in a seminal paper by M.Levine and F.Morel. 
It is usually denoted as $\Omega^*$ and is reffered 
to as Levine-Morel algebraic cobordism. 
(However, the comparison of $\Omega^*$ and $MGL^{*,*}$ turned out
to be difficult and was proved only years later).
They also gave a definition of an orientable theory 
which is different than that of Panin-Smirnov,
for which they proved that $\Omega^*$ is the universal oriented theory.
This universality allowed to introduce the whole bunch of orientable
theories which were investigated before in algebraic topology.
More precisely, for any formal group law $F_R$ over any ring $R$
there exist an oriented theory $\Omega^*\ot_{\Omega^*(k)} R$
with the ring of coefficients $R$ and the corresponding formal group law $F_R$.
These theories are called {\it free theories}.

In particular, one is able to introduce small Morava K-theories $K(n)^*$
as free theories with the ring of coefficients $\Zp$ and Lubin-Tate formal group law
(prime $p$ is not usually included in the notation, for each $p$ and $n\ge 1$ there is a theory $K(n)^*$).
Their topological counterpart (perhaps, with $\F{p}$-coefficients)
 appeared in the chromatic homotopy theory,
a study of the stable homotopy category localized away from prime $p$.
In algebraic geometry big Morava K-theories conjectural at that moment
 were used in the course of the proof of the Bloch-Kato conjecture 
by V.Voevodsky (eventually in a disguised way). 
Recently Morava K-theories were used 
to study projective homogenous varieties (\cite{PS_PetSem}). 

It would not be a big lie to say that Morava K-theories 
are quite mysterious objects. There is though at least one reason
why they are called {\sl K-theories}, which is that $K(1)^*$ 
can be viewed as theory $K_0\ot\Zp$ with its orientation changed.
The goal of this paper is to show that another similarity exists.
Note that usual Chern classes can be considered as (non-additive) operations
from $K_0$ to an oriented theory $A^*$.
If $A^*$ is a presheaf of $\Zp$-algebras, one can extend 
Chern classes to operations from $K_0\ot\Zp$, or equivalently $K(1)^*$, to $A^*$.
In this paper we produce similar operations from $K(n)^*$ for all $n$
when $A^*=CH^*\ot\Zp$. 

Chern classes
have two important properties as operations: first, there are no relations between polynomials on them,
and second, any operation from $K_0$ to any orientable theory $A^*$ can be written in terms of them.
 Probably, one can characterize 
orientable theories by this features having added some axioms in the definition of a theory.
In view of this the following question naturally arises:
does there exist a notion of $K(n)^*$-orientable theories?
Any such theory $A^*$ should be equipped with some operations $K(n)^*\rightarrow A^*$ 
which imitate usual Chern classes and can be named likewise.
These Chern classes should freely generate the ring 
of all operations to $A^*$ and we might try 
to reformulate some properties of $A^*$ in terms of them.

Although, we are not yet in the position to answer this general question, 
we provide some evidence that the question is not completely senseless.
Our main result is the existence of a series of operations
 $c_i:K(n)^*\rightarrow {CH^i\otimes\mathbb{Z}_{(p)}}$
which satisfy the Cartan formula 
and generate freely all operations to ${CH^*\otimes\mathbb{Z}_{(p)}}$.
We may interprete this as $CH^*\otimes\mathbb{Z}_{(p)}$ being '$K(n)^*$-orientable'.
At the same time we are able to show that 
there are many more operations from $K(n)^*$ to $CH^*/p$, except from those generated by operations $c_i$.
It should be noted that operations 
$c_1, c_2, \ldots, c_{p^n}$ appeared previously in a paper by V.Petrov and N.Semenov 
(cf. Remark \ref{rem:petsem}).

We need to say a few more words about the technique used in the proof of our result.
Calculating operations seems to be a complicated task in algebraic geometry.
However, recently the work of A.Vishik made this problem amenable 
for operations between small orientable theories. 
Vishik has developed a definition of orientable theories of rational type, for which it is possible
to construct the value of a theory on a particular variety by induction on its dimension.
It turned out that these theories are precisely free theories appearing above.
The inductive description allows to classify completely all operations
 from a free theory to any orientable theory.
More precisely, operation can be uniquely constructed by its restriction 
to the values on products of projective spaces s.t. it commutes 
with pullbacks along several morphisms between them.
These morphisms are the Segre embeddings, the diagonal maps,
the point inclusions, the projections and the permutations.
Using the projective bundle theorem (over a point) one can make a 1-to-1 correspondence between operations
and solutions of a certain system of linear equations which depends {\sl solely} on 
formal group laws of theories in consideration.

Our construction of operations from $K(n)^*$ to $CH^*\ot\Zp$ 
is just a solution of a system provided by Vishik's theorem and
therefore it is rather technical and lacks any geometric flavor at all.
We wish these operations could be constructed in a more elegant and meaningful way.
However, we think the calculation is a not-uninteresting example which shows that 
system of equations from Vishik's theorem is computable
and that the notion of $K(n)^*$-orientability could exist.

{\bf Outline.} In Section \ref{prelim} 
we recall notions of generalized orientable cohomology theory,
free theories of Levine-Morel and state Vishik's Classification of Operations Theorem.
In Section \ref{sec_op_K0} we classify all operations and all additive operations from K-theory 
to orientable theories. 
This is both used as a motivation for calculating operations from higher Morava K-theories 
and as a tool to simplify several proofs.

In Section \ref{sec_adams} we prove that Adams operations (existing on any free theory as proved by Vishik)
are central with respect to all operations. Using it we are able to extend non-additive 
operations when localizing coefficients (Proposition \ref{prolong}), namely,
we show that 
any operation $K_0\rarr CH^*\ot \Zp$ factors uniquely through the map $K_0\rarr K_0\ot\Zp$.
As a corollary of this statement we are able to compute the dimension of spaces of 
operations and additive operations from theories alike $K(n)^*$ to $CH^*\ot\Zp$.

Section \ref{sec_op_mor} contains main results of the paper.
Definitions and general properties of Morava K-theories are discussed in Section \ref{subsec_mor}.
In Section \ref{subsec_mainth} we state the main result, Theorem \ref{main}, and outline the general idea of its proof. 
Section \ref{sec_cor} contains several corollaries of the theorem which give a clue of how 
to prove it. Inductive construction of Chern classes is performed in section \ref{chern_constr}.
The construction itself is conditional upon a certain statement about additive operations
from $K(n)^*$ to $CH^*/p$. 
This statement is proved in Section \ref{op_modp} where we study additive operations from $K(n)^*$ to $CH^*/p$.
We find that there are many of them which can not be lifted to $CH^*\ot \Zp$ and prove a sufficient property
 for liftability as an additive operation. Section \ref{chern-base} closes the paper with the proof that constructed Chern classes
generate freely the ring of all operations to $CH^*\ot\Zp$.

\section*{Acknowledgements}

This work has started while the author visited A.~Vishik at the University of Nottingham in the Spring of 2015.
It is both encouragement given by him and persistent discussions with him 
which made this work appear. 
The author is enormously grateful to A.~Vishik for being his advisor and mentor during that time
as well as for useful comments on drafts of this text.

The study has been funded by the Russian Academic Excellence Project '5-100' 
and supported in part by Simons Foundation.

\section{Preliminaries}\label{prelim}

In this section we recall the notions of a generalized orientable cohomology theory,
a theory of rational type and state the main tool needed for our paper,  
Vishik's Classification of Operations Theorem.

Fix a field $k$ with $\mbox{char }k=0$.

\subsection{Orientable theories}

Several authors have inroduced slightly different notions of orientation of cohomology theories
 in algebraic geometry. 
 These include definitions of I.~Panin and A.~Smirnov(\cite[2.0.1]{PS_PanSmi})
 and of M.~Levine and F.~Morel \cite[1.1.2]{PS_LevMor}.
We will use a variant of the latter definition
with one more axiom (LOC) added as was used by A.~Vishik 
(note, however, that
this axiom is also denoted as (EXCI) in \cite{PS_Vish1}).

\begin{Def}[{Vishik, \cite[2.1]{PS_Vish1}}]\label{goct}
 
A generalized oriented cohomology theory (g.o.c.t.) is
a presheaf $A^*$ of commutative rings on a category of smooth varieties over $k$
supplied with the data of push-forward maps for proper morphisms.
Namely, for each proper morphism of smooth varieties $f:X\rarr Y$,
morphisms of abelian groups $f_*:A^*(X)\rarr A^*(Y)$ are defined.

The structure of push-forwards has to satisfy the following axioms 
(for precise statements see {\sl ibid}):
functoriality for compositions (A1), base change for transversal morphisms (A2),
projection formula, projective bundle theorem (PB),
 $\mathbb{A}^1$-homotopy invariance (EH)
and localisation axiom (LOC).
\end{Def}

Let us describe explicitly the axiom (LOC).
Let $X$ be a smooth variety over $k$, let $j:U\rarr X$ be an open embedding
and let $i:Z\rarr X$ be its closed complement. Define $A^*(Z)$ as a direct limit of $A^*(V)$
over the system of projective morhisms $f:V\rarr Z$ from smooth varieties $V$.
Push-forward maps $(i\circ f)_*$ induce the map $i_*:A^*(Z)\rarr A^*(V)$.

The localisation axiom says that the following sequence of abelian groups is exact:
\begin{equation}\tag{LOC}\label{eq_loc}
A^*(Z) \xrarr{i_*} A^*(X) \xrarr{j^*} A^*(U) \rarr 0
\end{equation}

{\it Notation}. Star in the superscript of a g.o.c.t. does not mean neccessarily
that the theory is graded. However, if it is, then we will freely replace the star 
by a number or a variable, e.g. $CH^2$ or $CH^i$.
In non-graded cases we do not drop the superscript in order to distinguish 
the theory itself with its ring of coefficients, i.e. $A^*$ is a presheaf of rings, and
$A$ is usually the corresponding value on a point, $A=A^*(Spec(k))$.
Nevertheless, we always write $K_0$ to denote the Grothendieck group of vector bundles 
as a g.o.c.t. with the ring of coefficients $\ZZ$.\\

For any g.o.c.t. one can define Chern classes of vector bundles in a usual way (\cite{PS_PanSmi})
as follows.
Let $X$ be a smooth variety, $L_X$ be a line bundle over $X$, 
denote by $s:X\rarr L_X$ the zero section of $L_X$. 
Then $c_1^A(L_X):=s^*s_*(1_X)$ is the first Chern class.
Higher Chern classes are defined using the projective bundle theorem
and are uniquely determined by the Cartan's formula 
and the property that $c_i^A(V)=0$ for $i>rk(V)$.

One may associate with each g.o.c.t.\ a formal group law (FGL) over its ring of coefficients.
Consider the infinite-dimensional projective space $\mathbb{P}^\infty$
as an ind-obejct of the category of smooth varieties. 
It is natural to consider the values of g.o.c.t. on products of 
$\mathbb{P}^\infty$ as inverse limits over the values on finite-dimensional
projective spaces.
From the (PB) axiom it follows then that 
 $A^*((\mathbb{P}^\infty)^{\times n})=A[[z_1,\ldots, z_n]]$
where $z_i$ denotes $c_1^A(\mathcal{O}(1)_i)$, 
and $\mathcal{O}(1)_i$ is the pullbacl of the canonical line bundle 
along the projection on the $i$-th component of the product.

The system of Segre maps between projective spaces
is compatible and yields a morphism of ind-objects 
$Seg:(\mathbb{P}^\infty)^{\times 2}\rarr \mathbb{P}^\infty$.
The pullback $Seg^*$ in $A^*$ is defined by its value on $z$
which we denote by $F_A(z_1,z_2)\in A[[z_1,z_2]]$. 
It is not difficult to show that $F_A$ is a formal group law.

Thinking of $\mathbb{P}^\infty$ as classifying line bundles
in a certain category (e.g. in the motivic stable homotopy category)
one notices that the Segre map induces tensor product of line bundles.
It is not surprising then that 
 for any line bundles $L, L'$
on a smooth variety $X$ the following formula holds: $c_1^A(L\otimes L')=F_A(c_1^A(L),c_1^A(L'))$ 
(\cite[Lemma 1.1.3]{PS_LevMor}).

\begin{Th}[Levine-Morel, {\cite[1.2.6]{PS_LevMor}}]
There exists the univeral g.o.c.t. $\Omega^*$ called algebraic cobordisms,
i.e. for any g.o.c.t. $A^*$ 
there exists a unique morphism of presheaves of rings
$p_A:\Omega^*\rarr A^*$ which respects the structure of pushforwards.

The associated formal group law of algebraic cobordisms
 is the universal formal group law,
i.e. $\Omega^*(\mathrm{Spec\ }k)$ 
is canonically isomorphic to the Lazard ring $\mathbb{L}$.
\end{Th}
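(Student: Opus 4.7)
The plan is to construct $\Omega^*$ directly by generators and relations, and then to verify separately the universal property among g.o.c.t.'s and the identification of its coefficient ring with the Lazard ring $\LL$. For a smooth variety $X$ let $\mathcal{Z}^*(X)$ be the free abelian group generated by isomorphism classes of projective morphisms $f:Y\rarr X$ with $Y$ smooth, graded by $\dim X - \dim Y$. Pushforward is given by composition, pullback along morphisms transverse to $f$ by base change, and disjoint union is addition. From this crude object one passes, by a sequence of quotients, first to a ``pre-cobordism'' $\underline{\Omega}^*$ enforcing $\mathbb{A}^1$-homotopy invariance and a projective bundle formula, and then to $\Omega^*$ by imposing the formal group law relation $c_1^\Omega(L\otimes L') = F_\Omega(c_1^\Omega(L), c_1^\Omega(L'))$ for a universal power series $F_\Omega$ over $\Omega^*(\Spec k)$.

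One then verifies that $\Omega^*$ is a g.o.c.t. Axioms (A1), (A2), the projection formula, (PB) and (EH) are essentially built into the construction, so that checking them amounts to verifying that the imposed relations are themselves consequences of those axioms. The delicate axiom is the localisation sequence (LOC): surjectivity of $j^*$ follows from a moving argument, but exactness in the middle term — that every cycle on $X$ vanishing on $U$ is pushed forward from $Z$ — requires the full strength of resolution of singularities and weak factorization in characteristic zero. I expect this to be the main obstacle, and it is indeed where the bulk of the technical work of Levine and Morel is concentrated.

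The universal property among g.o.c.t.'s is almost tautological once the construction is in hand. For any g.o.c.t. $A^*$ one sets $p_A([f:Y\rarr X]) := f_*(1_Y)$ on generators; compatibility with pushforwards, pullbacks along transverse morphisms, and the axioms of $A^*$ guarantee that $p_A$ descends through the successive quotients defining $\Omega^*$. Uniqueness is automatic, since any morphism respecting the pushforward structure is forced to send $[f:Y\rarr X]$ to $f_*(1_Y)$.

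For the identification $\Omega^*(\Spec k)\cong \LL$, the universal property of $\LL$ applied to $F_\Omega$ supplies a ring map $\LL\rarr \Omega^*(\Spec k)$. To prove it is an isomorphism one exhibits a left inverse through the complex realization morphism $\Omega^*(\Spec k)\rarr MU^*(\mathrm{pt})$ (after base change to \CC if necessary) combined with Quillen's theorem identifying $MU^*(\mathrm{pt})$ with $\LL$ via the universal FGL of complex cobordism. Since the composite $\LL\rarr \Omega^*(\Spec k)\rarr \LL$ is the identity, the first map is injective; its surjectivity requires producing explicit cobordism classes on a point (via products of projective spaces and Milnor hypersurfaces) sufficient to hit all of $\LL$. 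Together with (LOC), this surjectivity statement is the other genuinely hard step in the proof.
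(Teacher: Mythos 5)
Since the paper only cites Levine--Morel and gives no proof of its own, what matters is whether your sketch faithfully reflects the proof in \cite{PS_LevMor}. Your overall architecture — a generators-and-relations construction, verification of the axioms with (LOC) as the hard one, a near-tautological universal property, and an identification with $\LL$ via the classifying map of $F_\Omega$ together with topological realization and explicit generating classes — is broadly on target. There are, however, two genuine gaps in the construction step. First, Levine--Morel's cobordism cycles are not bare projective morphisms $f:Y\rarr X$ but tuples $(f:Y\rarr X,\,L_1,\ldots,L_r)$ decorated with line bundles on $Y$, graded by $\dim Y - r$. These decorations are the mechanism by which the first Chern class operator is made available at the cycle level \emph{before} any projective-bundle formula is in place; without them your stated step ``then to $\Omega^*$ by imposing the formal group law relation $c_1^\Omega(L\ot L')=F_\Omega(\ldots)$'' has no content, since there is nothing to impose it on. Correspondingly your formula for $p_A$ should read $p_A([f,L_1,\ldots,L_r])=f_*\bigl(c_1^A(L_1)\cdots c_1^A(L_r)\cdot 1_Y\bigr)$, not $f_*(1_Y)$. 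Second, the relations Levine--Morel impose on the way to $\Omega^*$ are the dimension axiom (Dim) and the section axiom (Sect), followed by the FGL relation; the properties (EH) and (PB) are then \emph{deduced}, not imposed. This is not cosmetic: (PB) presupposes Chern classes, so imposing it directly would be circular, whereas (Sect) is a cycle-level statement one can quotient by.

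Two smaller remarks. Your account of $\Omega^*(\Spec k)\cong\LL$ is a reasonable compression, but the decisive technical input in Levine--Morel is the generalized degree formula (itself resting on resolution of singularities), which shows $\Omega_*(k)$ is generated by the classes $[Y\rarr\Spec k]$ of smooth projective varieties; once that is in place, the injectivity-via-$MU$ and surjectivity-via-explicit-classes argument you outline goes through. Finally, note that Levine--Morel's Theorem~1.2.6 is universality among their oriented cohomology theories, which do \emph{not} include (LOC) as an axiom; the statement for g.o.c.t.'s in Vishik's sense follows because $\Omega^*$ happens to satisfy (LOC) (their Theorem~3.2.7) and every g.o.c.t.\ is a fortiori a Levine--Morel theory. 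Your draft blurs this, treating (LOC) as part of ``$\Omega^*$ is a g.o.c.t.'' rather than as an extra property enabling the restriction of the universal property to the smaller class; the conclusion is the same, but the logical order should be made explicit.
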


\begin{Exl}
The Grothendieck group of vector bundles $K_0$ and Chow groups $CH^*$
are g.o.c.t.'s.

The corresponding formal groups are 
the multiplicative FGL $F_m(x,y)=x+y+xy$ and the additive FGL $F_a(x,y)=x+y$ over 
the ring $\mathbb{Z}$, respectively.
\end{Exl}

The unique morphism of oriented theories $\Omega^* \rarr A^*$ respects Chern classes,
and we know that the classes $c_i^\Omega(V)$ of a vector bundle $V$ on a variety $X$
are zero whenever $i>\dim X$. Hence,
the same is true for any g.o.c.t. The same argument shows that 
Chern classes are nilpotent in any g.o.c.t. for each particular variety.

One can construct a g.o.c.t. with any given FGL in the following way.

\begin{Def}[Levine-Morel,{ \cite[Rem. 2.4.14]{PS_LevMor}}]
Let $R$ be a ring, let $\mathbb{L}\rightarrow R$ be a ring morphism
 corresponding to a formal group law $F_R$ over $R$.

Then $\Omega^*\otimes_\mathbb{L} R$ is a g.o.c.t. which is called a {\bf free theory}.
Its ring of coefficients is $R$, and its associated FGL is $F_R$.
\end{Def}

\begin{Rk}
Note that {\it any} formal group law yields a g.o.c.t, which
is mainly due to the kind of localisation axiom in the definition.
The tensor product is exact on the right and it suffices for the property (LOC)
to stay true after the change of coefficients.

This shows the difference with orientable theories in topology
or in the stable motivic homotopy category
where additional conditions on the formal group law are imposed in order for it 
to be realized.
\end{Rk}

\begin{Th}[Levine-Morel, {\cite[Th. 1.2.2 and 1.2.3]{PS_LevMor}}]
Chow groups $CH^*$ and K-theory of vector bundles $K_0$ 
are free theories, i.e. natural morphisms
$$ \Omega^*\ot_{\mathbb{L}, F_a} \ZZ \rarr CH^*, \quad \Omega^*\ot_{\mathbb{L}, F_m} \ZZ \rarr K_0 $$
are isomorphisms.
\end{Th}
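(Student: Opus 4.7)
The plan is to (i) obtain the two morphisms from the universal property of $\Omega^*$, (ii) show they factor through the claimed tensor products by identifying the associated FGLs, (iii) verify surjectivity using resolution of singularities, and (iv) prove injectivity, which is the heart of the matter.

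For (i) and (ii): by the Levine--Morel theorem cited just above, there are unique morphisms of g.o.c.t.'s $p_{CH}\colon\Omega^*\rarr CH^*$ and $p_{K_0}\colon\Omega^*\rarr K_0$. These are in particular morphisms of $\LL$-algebras, where $\LL$ acts on $CH^*$ and $K_0$ via the classifying maps $\LL\rarr\ZZ$ of their associated formal group laws. A direct computation on first Chern classes of line bundles, using $c_1^A(L)=s^*s_*(1)$, identifies $F_{CH^*}$ with $F_a$ (first Chern classes of line bundles add under tensor product, since divisor classes do) and $F_{K_0}$ with $F_m$ (via the identity $[L\ot M]=[L]\cdot[M]$ in $K_0$, expanded in terms of $c_1^{K_0}$). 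Hence both $p_{CH}$ and $p_{K_0}$ factor uniquely through the respective free theories, producing the morphisms $\tilde p_{CH}$ and $\tilde p_{K_0}$ whose bijectivity one must establish.

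For (iii): on a smooth $X$, every class in $CH^*(X)$ is a $\ZZ$-linear combination of classes $[Z]$ for irreducible closed subvarieties $Z\subset X$. Applying Hironaka's resolution of singularities (available since $\mathrm{char}\,k=0$) yields a projective $f\colon\tilde Z\rarr X$ from smooth $\tilde Z$ with $f_*(1_{\tilde Z})=[Z]$; the cobordism class $[\tilde Z\xrarr{f}X]\in\Omega^*(X)$ maps to $[Z]$ under $\tilde p_{CH}$, giving surjectivity. An analogous argument for $K_0$, combining d\'evissage for coherent sheaves, resolution of singularities, and the identification $K_0^{\mathrm{coh}}(X)=K_0^{\mathrm{vect}}(X)$ on smooth $X$, yields surjectivity of $\tilde p_{K_0}$.

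Step (iv) is the main obstacle. The approach is to use the explicit presentation of $\Omega^*(X)$ due to Levine--Morel as the quotient of the free abelian group on projective morphisms $[Y\rarr X]$ with $Y$ smooth, modulo relations of dimensional, zero-section, and formal-group-law type. After tensoring with $\ZZ$ along $\LL\rarr\ZZ$ classifying $F_a$ (respectively $F_m$), the FGL relation collapses to plain additivity (respectively to the multiplicative law) for the first Chern class, which already holds in the target $CH^*(X)$ (respectively $K_0(X)$). One then constructs an inverse $CH^*(X)\rarr\Omega^*(X)\ot_\LL\ZZ$ by sending $[Z]$ to $[\tilde Z\rarr X]\ot 1$, and the key step is verifying well-definedness on rational equivalence. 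This requires the weak factorization theorem of Wlodarczyk and Abramovich--Karu--Matsuki--Wlodarczyk to decompose every rational equivalence into a sequence of blowups in smooth centers, whose effect on $\Omega^*$ modulo $\LL_{>0}$ can be computed using the projective bundle and blowup formulas. The construction for $K_0$ runs in parallel, replacing rational equivalence by the short-exact-sequence relations and the additive FGL by the multiplicative one.
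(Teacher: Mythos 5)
This statement is cited in the paper as a theorem of Levine--Morel (their book, cited as \cite{PS_LevMor}), and the paper gives no proof of it; so there is no ``paper's proof'' to compare against. What follows therefore assesses your sketch against the actual argument in Levine--Morel.

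Steps (i)--(iii) of your plan are fine. The universal property gives the maps, the identification of the formal group laws is a standard computation, and surjectivity onto $CH^*$ via resolution of singularities (and onto $K_0$ via d\'evissage plus the identification of $G_0$ with $K_0$ on smooth schemes) is correct and close in spirit to what Levine--Morel do.

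Step (iv) is where the gap lies. You propose to build an inverse $[Z]\mapsto[\tilde Z\rarr X]\ot 1$ and to verify well-definedness ``on rational equivalence'' via weak factorization. Weak factorization only controls the choice of resolution $\tilde Z$ of a fixed $Z$; it says nothing about why two rationally equivalent cycles $[Z_0]\sim[Z_\infty]$ give the same class in $\Omega^*(X)\ot_\LL\ZZ$. That requires lifting a rational equivalence (a cycle on $X\times\mathbb{P}^1$) to a relation in $\Omega^*$, and after resolving the total space its fibres over $0$ and $\infty$ need not be smooth, nor need they coincide with the chosen resolutions $\tilde Z_0,\tilde Z_\infty$ --- so the argument does not close. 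There is a second gap you do not mention: an inverse is only an inverse if \emph{both} composites are the identity, and the composite $\Omega^*(X)\ot_\LL\ZZ\rarr CH^*(X)\rarr\Omega^*(X)\ot_\LL\ZZ$ being the identity requires knowing that every class in $\Omega^*(X)\ot_\LL\ZZ$ is a $\ZZ$-linear combination of classes of resolutions of subvarieties of $X$; this is exactly the content of Levine--Morel's generalized degree formula, which is a substantial theorem in its own right. Levine--Morel's actual proofs of these two isomorphisms do not use weak factorization: for $CH^*$ they prove that $CH_*$ is the universal \emph{additive} oriented Borel--Moore homology theory and compare universal properties via the degree formula, and for $K_0$ they argue via the projective bundle theorem and a Riemann--Roch type comparison. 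So your proposal is a reasonable first sketch of the surjectivity half but does not contain a proof of injectivity, and the route you indicate for it diverges from (and would not straightforwardly replace) the argument in the cited source.
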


Theories of rational type were introduced by A.Vishik in \cite{PS_Vish1}
and are those g.o.c.t. which satisfy an additional axiom (CONST)
and a really strong 'inductive' property (but rather technical to state it here precisely).
One crucial feature of this property is that 
values on varieties can be described by induction on dimension,
and this allows to construct operations from such theories 
to orientable theories effectively, by induction on dimension.

The {\bf axiom (CONST)} for a g.o.c.t. $A^*$
says that for any smooth irreducible variety $X$
the natural restriction map $A\rarr A^*(k(X))$ is an isomorphism.
Here $A=A(k)$ and $A^*(k(X)):=\mathrm{colim_{U\subset X} }A^*(U)$ where
$U$ runs over all open subsets of $X$.

If this property is satisfied, the restriction maps $A\rarr A^*(X)\rarr A^*(k(X))$ permit
 to split $A^*$ as presheaf of abelian groups into two summands:
$A^*=\tilde{A}^*\oplus A$, where $A$ is a constant presheaf and $\tilde{A}^*$
 is an ideal subpresheaf of elements which are trivial in generic points.
The algebraic cobordism $\Omega^*$ satisfy this property,
and therefore all free theories as well.

The following theorem allows us to skip the definition of a theory of rational type,
which we will not use explicitly.

\begin{Th}[Vishik, {\cite[Prop. 4.9]{PS_Vish1}}]
Theories of rational type are precisely free theories.
\end{Th}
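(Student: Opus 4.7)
The plan is to prove the two inclusions separately. For one direction, I would show that every free theory $\Omega^*\ot_{\mathbb{L}}R$ is of rational type: the axiom (CONST) is known to hold for $\Omega^*$ by Levine--Morel, and since tensoring with $R$ over $\mathbb{L}$ is right-exact and commutes with the generic-fiber colimits used to define $A^*(k(X))$, (CONST) is preserved. The key point then is that the inductive construction of $\Omega^*(X)$ à la Levine--Morel, which expresses the value on an $n$-dimensional variety via generators of Bott--Samelson type modulo relations coming from lower-dimensional geometry and from the universal formal group law over $\mathbb{L}$, is compatible with base change. Tensoring with $R$ turns the universal formal group law into $F_R$, so the inductive presentation of $\Omega^*\ot_{\mathbb{L}}R$ uses only the formal group law $F_R$ and lower-dimensional data. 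That presentation is precisely what the inductive axiom of a theory of rational type demands.

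For the converse, let $A^*$ be a theory of rational type with coefficient ring $A$ and formal group law $F_A\colon\mathbb{L}\to A$. Universality of $\Omega^*$ gives a morphism $\Omega^*\to A^*$ of presheaves of rings respecting pushforwards, which by the universal property of the tensor product factors as
\begin{equation*}
\varphi_X\colon (\Omega^*\ot_{\mathbb{L}}A)(X)\longrightarrow A^*(X),
\end{equation*}
natural in $X$. I would prove by induction on $\dim X$ that $\varphi_X$ is an isomorphism for every smooth variety $X$. The base case $\dim X=0$ reduces, via (CONST), to the tautology $A\xrightarrow{\cong}A$.

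For the inductive step, I would feed both sides into the inductive description guaranteed by the rational-type axiom. Since $\Omega^*\ot_{\mathbb{L}}A$ is itself of rational type by the first half of the proof, both $(\Omega^*\ot_{\mathbb{L}}A)(X)$ and $A^*(X)$ are computed by the same recipe: generators coming from proper maps $V\to X$ with $\dim V<\dim X$, relations of the same shape (coherence with pullbacks, the projective bundle theorem, and relations encoded by $F_A$). The map $\varphi$ is compatible with all of this structure, and the induction hypothesis identifies the generators and relations on the two sides, yielding that $\varphi_X$ is bijective.

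The main obstacle is making the inductive step rigorous without the explicit definition of ``theory of rational type'' at hand: one has to verify that the presentation of $A^*(X)$ singled out by the axiom depends only on the formal group law $F_A$ (so that it matches the corresponding presentation of the free theory) and not on finer data of $A^*$. Concretely, this amounts to checking that the relations imposed on the inductively defined generators --- e.g.\ relations forced by blow-ups, by the projective bundle formula, and by (LOC) in lower dimensions --- are all controlled by $F_A$ through the Levine--Morel double-point or Bott--Samelson machinery; once this compatibility is in place, the inductive identification is formal.
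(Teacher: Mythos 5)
The paper does not contain a proof of this statement at all: it is imported verbatim from Vishik (\cite[Prop.~4.9]{PS_Vish1}), and the authors explicitly use it to \emph{avoid} stating the definition of a theory of rational type. So there is no ``paper's proof'' to compare against; the only available target is Vishik's original argument.

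Your outline is in fact the right shape of Vishik's proof: first show $\Omega^*$ is of rational type and that the axioms survive base change $-\ot_{\mathbb{L}}R$; then use universality of $\Omega^*$ to get a comparison map $\Omega^*\ot_{\mathbb{L}}A\to A^*$ and prove by induction on dimension, using the inductive description, that it is an isomorphism. The step where you check (CONST) for a free theory is fine as stated (the generic-point colimit commutes with $-\ot_{\mathbb{L}}R$, and $\Omega^*(k(X))\cong\mathbb{L}$). But the rest is not a proof, for the reason you yourself flag at the end: everything hinges on knowing the precise inductive axiom (Vishik's axiom on ``standard cycles'' / the surjectivity-and-kernel description of $A^*(X)$ in terms of the formal group law and proper pushforwards from lower-dimensional varieties), and on verifying that this axiom pins down $A^*(X)$ \emph{uniquely} from $A$, $F_A$, and the values in lower dimension. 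That verification is the entire mathematical content of the theorem; saying ``this amounts to checking that the relations are all controlled by $F_A$'' is a restatement of the goal, not an argument. Without the explicit axiom in hand, the inductive step cannot close, and the claim in the first half that the Levine--Morel presentation ``is precisely what the rational-type axiom demands'' is an assertion rather than a deduction. In short: correct strategy, honestly acknowledged gap, but the gap is exactly the theorem.
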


\subsection{Operations between theories of rational type}

Though the notion of a theory of rational type does not yield new examples of cohomology theories,
 their intrinsic inductive description
allows to study operations and poly-operations between them in a very efficient way.

\begin{Def}\label{operation}
Let $A^*, B^*$ be presheaves of abelian groups (or rings, or graded rings)
 on the category of smooth varieties over a field.

An {\bf operation} $\phi: A^*\rarr B^*$ is a morphism of presheaves of sets. 
The set of operations is denoted by $[A^*, B^*]$. 
If $B^*$ is a presheaf of rings, the set $[A^*, B^*]$ has the natural ring structure
induced by such structure on the target theory.

An {\bf additive operation} $\phi:A^*\rarr B^*$ is a morphism of presheaves of abelian groups.
The set of additive operations is denoted by $[A^*,B^*]^{add}$.
\end{Def}

We will need to consider not only operations, but poly-operations as well.
There are two types of them: external and internal ones.
 It is not hard to see that there is a 1-to-1 correspondence between these two notions (\cite[p.8]{PS_Vish2}).
As we will be concerned only with external operations, we omit the adjective in the following definition.

\begin{Def}
Let $A^*, B^*$ be presheaves of abelian groups (or rings, or graded rings)
 on the category of smooth varieties over a field $k$.
 
An {\bf $\mathbf{r}$-ary poly-operation} from $A^*$ to $B^*$ 
is a moprhism of presheaves of sets on the $r$-product category
of smooth varieties over a field $k$
from $(A^*)^{\times r}$ to $B^*\circ \prod^r$.

Explicitly, for varieties $X_1, \ldots, X_r$
poly-operation yields a map of sets 
$$ A^*(X_1)\times A^*(X_2) \times \ldots \times A^*(X_r) 
\rightarrow B^*(X_1 \times X_2 \times \ldots \times X_r).$$

The set of $r$-ary poly-operations is denoted by $[(A^*)^{\times r}, B^*\circ \prod^r]$.
\end{Def}

\begin{BigTh}[{Vishik, \cite[Th. 5.1]{PS_Vish1}, \cite[Th. 5.2]{PS_Vish2}}]\label{Vish_op}
\item 

Let $A^*$ be a theory of rational type and let $B^*$ be a g.o.c.t.

Then the set of $r$-ary poly-operations from $A^*$ to $B^*$
is in 1-to-1 correspondence with the following data:

maps of sets 
$\times_{i=1}^r A^*((\mathbb{P}^\infty)^{\times l_i})\rightarrow B^*(\times^r_{i=1}(\mathbb{P}^\infty)^{\times l_i})$ for $l_i \ge 0$
(restrictions of a poly-operation),
which commute with the pull-backs for:
\begin{enumerate}
\item the permutation action of a product of symmetric groups $\times_{i=1}^r \Sigma_{l_i}$;
\item the partial diagonals for each $i$;
\item the partial Segre embeddings for each $i$;
\item the partial point embeddings for each $i$;
\item the partial projections for each $i$.
\end{enumerate}
\end{BigTh}

\begin{Rk}\label{rem:grad}
If the target theory is graded, 
then the theorem allows one to compute poly-operations to each of the components of the target.

To see this note that grading on $B^*$ yields (additive) projectors $p_n:B^*\rarr B^n$
and an operation to a component $B^n$ is just an operation
which is zero when composed with $p_m$, $m\neq n$.
 As follows from the theorem, this property may be checked on products of projective spaces.
\end{Rk}

\begin{Rk}
Analogous result was proved in topology by T.Kashiwabara, \cite[Th. 4.2]{PS_Kash}.

We would expect that cohomology theories in topology
analogous to $K(n)^*$ and $CH^*\ot\Zp$ considered in this paper
satisfy conditions of Kashiwabara's theorem, however, we have not checked it. 
This would mean that our computation, Theorem \ref{main}, is true in topological context as well.
\end{Rk}

\subsection{Continuity of operations}\label{sec:cont}

Recall that by the (PB) axiom for any g.o.c.t. $A^*$
the value on the product of infinite-dimensional projective spaces
 $A^*((\mathbb{P}^\infty)^{\times l})$
 is a formal power series ring
$A[[c_1^A(\mathcal{O}(1)_1),\ldots, c_1^A(\mathcal{O}(1)_l)]]$,
where $\mathcal{O}(1)_i$ is the pull-back of the canonical line bundle from the $i$-th factor. 
Throughout the article we will use the notation $z_i^A := c_1^A(\mathcal{O}(1)_i)$,
or just $z_i$ if theory $A^*$ is clear from the context.

The restriction of any operation to products of projective spaces
satisfies the property of continuity which we now explain.

Let $G:A^*\rarr B^*$ be any operation from a theory of rational type  $A^*$ to any g.o.c.t. $B^*$.

By the COT $G$ is determined by maps of sets  
$G_{\{l\}}:A[[z^A_1,\ldots, z^A_l]]\rarr B[[z^B_1,\ldots, z^B_l]]$
for all $l\ge 0$.
As $G_{\{l\}}$'s have to commute with pull-backs along partial projections  
the following diagram is commutative for any $l\ge 0$:
\begin{diagram}
A[[z^A_1,\ldots, z^A_l]] &\rTo^{G_{\{l\}}} & B[[z^B_1,\ldots, z^B_l]] \\
\dInto & & \dInto \\
A[[z^A_1,\ldots, z^A_{l+1}]] &\rTo^{G_{\{l+1\}}} & B[[z^B_1,\ldots, z^B_{l+1}]]
\end{diagram}

This allows to use only one transform, the inductive limit of maps $G_{\{l\}}$
$$ G:A[[z^A_1,\ldots, z^A_l,\ldots]]\rarr B[[z^B_1, \ldots, z^B_l, \ldots]]$$
which uniquely determines $G_{\{l\}}$ for any $l$.

Denote by $F^k_A$ an ideal in $A[[z^A_1,\ldots, z^A_l, \ldots]]$ of series of degree $\ge k$
($F^k_B$ is defined analogously). Without loss of generality (as we may subtract $G(0)$),
 we may assume that $G(0)=0$.

\begin{Prop}[Vishik, {\cite[Prop. 5.3]{PS_Vish2}}]\ \\

Let $P, P'\in A[[z^A_1,\ldots, z^A_l, \ldots]]$ be s.t. $P \equiv P' \mod F^k_A$.
Then $$ G(P) \equiv G(P') \mod F^k_B. $$
\end{Prop}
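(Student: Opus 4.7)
The plan is to induct on $k$; the base case $k=0$ is vacuous. For the inductive step, assume the proposition holds for $k-1$, so that $G(P)-G(P')\in F^{k-1}_B$. Writing $G(P)-G(P')=H_{k-1}+H_{\geq k}$ where $H_{k-1}=\sum_{|a|=k-1}c_a\,z^a$ is the homogeneous component of degree $k-1$ and $H_{\geq k}\in F^k_B$, it suffices to show $c_a=0$ for every multi-index $a$ with $|a|=k-1$; note that any given $P,P'$ live in $A[[z_1,\ldots,z_l]]$ for some finite $l$ by the inductive limit structure.

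To isolate $c_a$, for each $a=(a_1,\ldots,a_l)$ with $|a|=k-1$ I would consider the morphism $f_a\colon X_a\to(\mathbb{P}^\infty)^{\times l}$ with $X_a:=\prod_{i:\,a_i>0}\mathbb{P}^{a_i}$, classified by the tuple of line bundles $(L_1,\ldots,L_l)$ where $L_i=p_i^*\mathcal{O}(1)$ for those $i$ with $a_i>0$ (with $p_i$ the corresponding projection) and $L_i=\mathcal{O}_{X_a}$ otherwise. Since $\dim X_a=\sum_i a_i=k-1$, a pigeonhole argument shows $(h_1,\ldots,h_s)^k=0$ in $B^*(X_a)=B[h_1,\ldots,h_s]/(h_j^{a_j+1})$: any degree-$k$ monomial $\prod_j h_j^{b_j}$ with $\sum_j b_j=k>\sum_j a_j$ must have some $b_j>a_j$, hence vanishes. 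Consequently $f_a^*$ annihilates both $F^k_A$ and $F^k_B$, so $f_a^*(P-P')=0$ and $f_a^*(H_{\geq k})=0$.

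By the naturality of $G$ (applied on the smooth variety $X_a$) we then have $f_a^*(G(P)-G(P'))=G(f_a^*P)-G(f_a^*P')=0$, hence $f_a^*(H_{k-1})=0$ in $B^*(X_a)$. The key computation is that $f_a^*(z^b)=\prod_{i:\,a_i>0}h_i^{b_i}\cdot\prod_{i:\,a_i=0}0^{b_i}$; under the constraint $|b|=k-1=|a|$ this is nonzero only when $b=a$, in which case it equals the top class $\prod_i h_i^{a_i}$. Therefore $c_a\cdot\prod_i h_i^{a_i}=0$, and since the top class is a basis element of the free $B$-module $B^*(X_a)$, this forces $c_a=0$.

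The main subtlety is the choice of the pullback $f_a$: it must simultaneously kill $F^k_A$ in the source and isolate a single coefficient of $H_{k-1}$ in the target. A more naive approach --- using only pullbacks $\mathbb{P}^{k-1}\to(\mathbb{P}^\infty)^{\times l}$ classified by tuples $(\mathcal{O}(n_1),\ldots,\mathcal{O}(n_l))$ for $n_i\in\mathbb{Z}$ --- would reduce the problem to showing that a homogeneous polynomial over $B$ vanishing on $\mathbb{Z}^l$ is identically zero, which can fail when $B$ has torsion (e.g.\ $XY(X+Y)$ over $\F{2}$). The product-of-projective-spaces construction above sidesteps this entirely by exploiting unit line bundles on separate factors.
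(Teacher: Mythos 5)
The paper does not prove this Proposition; it cites it from Vishik's \cite[Prop.\ 5.3]{PS_Vish2}, so there is no in-text proof to compare against. That said, your argument is correct and is, I believe, essentially the standard one.

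The key points all check out. The induction is sound: the hypothesis at level $k-1$ gives $G(P)-G(P')\in F^{k-1}_B$, and the job is to kill the degree-$(k-1)$ homogeneous part $H_{k-1}$. Your choice of $X_a=\prod_{a_i>0}\mathbb{P}^{a_i}$ does exactly what is needed on both ends. On the source, $\dim X_a=|a|=k-1$ ensures $f_a^*(F^k_A)=0$ because every monomial of degree $\ge k$ either meets a variable set to zero or has some exponent exceeding the nilpotency bound $h_j^{a_j+1}=0$ supplied by (PB); the same reasoning kills $F^k_B$. On the target, $B^*(X_a)$ is a \emph{free} $B$-module (iterated projective bundle theorem) with the top class $\prod_j h_j^{a_j}$ as a basis element, so the identity $f_a^*(H_{k-1})=c_a\prod_j h_j^{a_j}=0$ really does force $c_a=0$, with no torsion loophole. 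Naturality of $G$ against $f_a$ is legitimate since $G$ is a full operation, i.e.\ a natural transformation on all smooth varieties, not merely the restricted data of the COT; and $f_a$ factors through $(\mathbb{P}^N)^{\times l}$ for finite $N$, so this is genuine naturality of presheaves. Your closing remark about why the naive evaluation-at-integer-points approach fails over a ring with torsion (the $XY(X+Y)$ example over $\F{2}$) is a correct and worthwhile observation; it also elides a second defect of that approach, namely that $c_1^A(\mathcal{O}(n_i))=[n_i]_{F_A}(h)$ is only $n_ih$ to leading order, so one would additionally have to keep track of the formal group law. Your construction sidesteps both issues.

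Two small points of hygiene, neither a gap: (a) the pullback $f_a^*$ of a formal power series is well-defined because the classes $h_j$ are nilpotent in $B^*(X_a)$, so only finitely many terms survive; (b) since $P,P'$ involve finitely many variables $z_1,\ldots,z_l$, so does $G(P)-G(P')$, and the multi-indices $a$ with $|a|=k-1$ supported in $\{1,\ldots,l\}$ form a finite set, so the argument terminates.
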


This Proposition allows to calculate approximation of $G(P)$ approximating $P$.
In particular, an operation $G$ is determined by its restriction 
to the products of finite-dimensional projective spaces, or equivalently
by the maps
$$G_{r,n}:A^*((\mathbb{P}^n)^{\times l})=A[[z^A_1,\ldots, z^A_l]]/F_A^{n+1} 
\rarr B^*((\mathbb{P}^n)^{\times l})=B[[z_1,\ldots, z_l]]/F_B^{n+1}.$$
In other words, the maps $G_{\{l\}}$ are determined by their restriction 
to the polynomial rings $A[z^A_1,\ldots, z^A_l] \subset A[[z^A_1,\ldots, z^A_l]]$
and the same is true for the map $G$.

An analogous statement is true for poly-operations as well.

\subsection{Classification of additive operations}\label{sec:CAOT}

The data of an additive operation specified in the COT 
can be rewritten as a set of formal power series satisfying a system of linear equations.
We specify this system here.

Let $A^*$ be a theory of rational type, let $B^*$ be a g.o.c.t. and let $\phi$ be an additive operation 
from $A^*$ to $B^*$.
For $l\ge 0$ define maps ${G_l\in Hom_{Ab}(A, B[[z^B_1,\cdots, z^B_l]])}$ 
to be the values of the operation on 'basis' monomials of $z$-degree $l$:
$$G_l(\alpha)(z^B_1,\ldots, z^B_l) := \phi(\alpha z_1^A\cdots z_l^A).$$

It is clear by continuity that the maps of abelian groups
$G_{\{l\}}:A^*((\mathbb{P}^\infty)^{\times l}) \rarr B^*( ((\mathbb{P}^\infty)^{\times l}) )$
carry the same information as the maps $G_l$. 

\begin{BigTh-add}[Vishik, {\cite[Th. 6.2]{PS_Vish1}}]\ \\
Let $A^*$ be a theory of rational type and let $B^*$ be any g.o.c.t.
Denote by $F_A(x,y)=\sum_{i,j} a_{ij}x^iy^j$, $F_B=\sum_{i,j} b_{ij}x^iy^j$
the corresponding formal group laws.

Then the abelian group of additive operations $[A^*,B^*]^{add}$
is in 1-to-1 correspondence with the set of maps $G_l\in Hom_{Ab}(A, B[[z^B_1,\cdots, z^B_l]])$ 
for $l\ge 0$ which satisfy the following properties:

\begin{enumerate}[i)]
\item for any $\alpha\in A$ the series $G_l(\alpha)$ is divisible by $z_1^B\cdots z_l^B$;
\item for any $\alpha\in A$ the series $G_l(\alpha)$ is symmetric;
\item for any $\alpha\in A$ the following system of equations is satisfied
\begin{equation}\label{eq:add}
G_l(\alpha)(z_1^B, z^B_2, \ldots, z^B_{l-1}, F_B(x,y)) = 
\sum_{i,j} G_{l+i+j-1}(\alpha a_{i,j})(z_1^B, z_2^B, \ldots, z^B_{l-1}, x^{\times i}, y^{\times j})
\end{equation}
\end{enumerate}

Here $x^{\times i}$ and $y^{\times j}$ denote $i$-tuple $(x,x,\ldots, x)$ and $j$-tuple
$(y,y,\ldots, y)$ respectively.
\end{BigTh-add}

Note that i) is an instance of continuity discussed in Section \ref{sec:cont}.

\begin{Rk}\label{rem:add_grad}
If theories $A^*$ and $B^*$ are graded, 
one can use Remark \ref{rem:grad} to specify the data of additive operations between graded components.
Additive operations from $A^n$ to $B^m$ are in 1-to-1 correspondence
with maps $G_l\in Hom_{Ab}(A^{n-l},B[[z_1,\ldots, z_l]]_{(m)})$
satisfying properties i)-iii) of the CAOT.
\end{Rk}

\subsection{Derivatives and products of poly-operations}\label{subsec_poly}

There are two straight-forward ways to produce some poly-operations from operations,
or in other words to increase the {\sl arity} of operations.

First, if $\phi_1, \phi_2$ are $r_1$-ary and $r_2$-ary poly-operations, respectively,
then we define an $(r_1+r_2)$-ary poly-operation $\phi_1\odot \phi_2$ 
as their external product: 
$$(\phi_1\odot \phi_2) (x_1,x_2,\ldots,x_{r_1},y_1,y_2,\ldots,y_{r_2}) = \phi_1(x_1,x_2,\ldots,x_{r_1})\phi_2(y_1,y_2,\ldots,y_{r_2}).$$

This construction defines a morphism of algebras
$$ [(A^*)^{\times r_1},B^*\circ \prod^{r_1}]\ot_B [(A^*)^{\times r_2},B^*\circ\prod^{r_2}] \xrarr{\odot} 
[(A^*)^{\times (r_1+r_2)}, B^*\circ \prod^{r_1+r_2}].$$

One may interprete the statement that the latter map is an isomorphism 
for particular $A^*$ and $B^*$ as some kind of Kunneth-type property.
When this property is satisfied for all $r_1, r_2$ 
(cf. Th. \ref{chern_polybase}, Prop. \ref{ch-base} and \ref{polyderint}),
we will write $[(A^*)^{\times r}, B^*\circ \prod^r] = [A^*,B^*]^{\odot r}$.

Second, if $\phi$ is an $r$-ary poly-operation,
then we define an $(r+1)$-ary poly-operation $\partial^1_i \phi$ 
as its derivative with respect to the $i$-th component (\cite[Def.3.1]{PS_Vish2}).
Denote by $Z_{<i}=(z_1,\ldots,z_{i-1})$, $Z_{>i}=(z_{i+1}, \ldots, z_r)$, then 
$$ \partial_i \phi(Z_{<i},x,y,Z_{>i}) := 
\phi(Z_{<i},x+y,Z_{>i})
-\phi(Z_{<i},x,Z_{>i})-\phi(Z_{<i},y,Z_{>i}).$$

It is clear that $\phi$ is poly-additive if and only if $\partial_i \phi = 0$ for $1\le i \le r$.

If $r=1$, i.e. $\phi$ is an operation, we will omit the subscript
and write $\partial \phi$ to mean its derivative.
Iterating the procedure one can easily define
$\partial^s_{(r_1,\ldots,r_s)}=\partial_{r_s}\circ\partial_{r_{s-1}}\circ \cdots \circ \partial_{r_1}$.
However, it is easy to see that all $s$-derivatives of an operation are symmetric
and thus derivatives do not depend on the order of derivation.
We will write $\partial^s \phi$ to denote any of them.

By definition of the derivative of $\phi$ one can express
values of $\phi$ on the sum of {\it two} elements
as the sum of values of $\phi$ and $\partial^1\phi$.
It is useful for computations to have analogous formulas
for the values on the sum of any number of elements.

\begin{Prop}[Discrete Taylor Expansion, Vishik, {\cite[Prop. 3.2]{PS_Vish2}}]\label{prop:taylor}
Let $f:A\rarr B$ be a map between abelian groups.
Denote by $\partial^if:A^{\times i}\rarr B$ its derivatives.

For any set $\{a_i\}_{i\in I}$ of elements in $A$ the following equality holds:

$$ f(\sum_{i\in I} a_i) = \sum_{J\subset I} \partial^{|J|-1}f(a_j|j\in J). $$
\end{Prop}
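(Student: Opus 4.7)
The plan is to prove the formula by induction on $n = |I|$, which I read as implicitly finite. The base case $n = 1$ is the tautology $f(a_{i_0}) = \partial^0 f(a_{i_0})$, taking $\partial^0 f := f$ by convention.

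For the inductive step, I would pick an element $i_0 \in I$, set $I' = I \setminus \{i_0\}$ and $S' = \sum_{i \in I'} a_i$. Applying the very definition of $\partial$ to $f(S' + a_{i_0})$ yields
$$f\left(\sum_{i \in I} a_i\right) = f(S') + f(a_{i_0}) + \partial f(S', a_{i_0}).$$
I would apply the induction hypothesis to $f$ on the indexing set $I'$ to expand $f(S')$, and separately to the auxiliary map $g : A \to B$, $a \mapsto \partial f(a, a_{i_0})$, to expand $g(S') = \partial f(S', a_{i_0})$. The higher derivatives of $g$ are $\partial^s g(a_{j_1}, \ldots, a_{j_s}) = \partial^{s+1} f(a_{j_1}, \ldots, a_{j_s}, a_{i_0})$, which, being derivatives of $f$, are symmetric in all $s+1$ arguments as noted earlier in the text.

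Combining the two expansions is purely bookkeeping: the nonempty subsets $J \subseteq I$ partition into (a) $J \subseteq I'$, (b) $J = \{i_0\}$, and (c) $J = J' \sqcup \{i_0\}$ with $\emptyset \neq J' \subseteq I'$, and these correspond exactly to the three kinds of terms produced, respectively by the expansion of $f(S')$, the isolated summand $f(a_{i_0})$, and the expansion of $g(S')$.

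An equivalent, perhaps more conceptual route is via Möbius inversion on the Boolean lattice of subsets: one first shows (again by easy induction) that
$$\partial^{|J|-1} f(a_j \mid j \in J) = \sum_{\emptyset \neq K \subseteq J} (-1)^{|J|-|K|} f\left(\sum_{k \in K} a_k\right),$$
then substitutes this into the claimed right-hand side and swaps the order of summation over $K \subseteq J \subseteq I$, using the standard fact that $\sum_{K \subseteq J \subseteq I} (-1)^{|J|-|K|}$ vanishes unless $K = I$. Either way, the only inputs are the definition of $\partial$ and the symmetry of iterated derivatives, so there is no serious technical obstacle; the only care required is the combinatorial accounting of which subsets contribute to which term.
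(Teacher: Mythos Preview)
Your proof is correct. Note, however, that the paper does not actually give its own proof of this proposition: it is quoted as a result of Vishik (\cite[Prop.~3.2]{PS_Vish2}) and stated without argument, so there is no ``paper's own proof'' to compare against. Your induction on $|I|$ is the standard route and matches what one finds in the cited reference; the only point worth making explicit is that the induction hypothesis must be stated for \emph{all} maps $A\to B$ simultaneously (not just the fixed $f$), since you invoke it for the auxiliary map $g(a)=\partial f(a,a_{i_0})$. You clearly intend this, but it should be said. The alternative M\"obius-inversion argument you sketch is equally valid and arguably cleaner, since it avoids introducing $g$ altogether.
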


\section{Operations from $K_0$ to orientable theories}\label{sec_op_K0}

\subsection{Chern classes as free generators of operations from $K_0$}

The following Theorem was communicated to the author by A.Vishik.
The result is analogous to the usual calculation of generalized cohomology of a product of infinite Grassmannians, though it does not formally follow from it.

\begin{Th}[Vishik]\label{basis}
Let $A^*$ be a g.o.c.t.
Then the ring of $r$-ary poly-operations from a presheaf $\tilde{K_0}$ to $A^*$ 
is freely generated over $A$ by external products of Chern classes.

Using notations from section \ref{subsec_poly}, 
we write  $[(\tilde{K}_0)^{\times r}, A^*\circ \prod^r] = A[[c_1^A,\ldots,c_i^A,\ldots]]^{\odot r}$.
\end{Th}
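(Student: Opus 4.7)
The plan is to invoke the COT to reduce the statement to a compatible system of set-maps on products of $\mathbb{P}^\infty$'s, and then to exhibit an explicit $A$-algebra isomorphism with the formal power series ring $A[[c_j^{(i)}]]^{\odot r}$ in Chern class variables. Concretely, I would first define the natural map
\[
\Phi : A[[c_j^{(i)} : 1 \le i \le r,\ j \ge 1]] \longrightarrow [(\tilde K_0)^{\times r},\ A^* \circ \prod\nolimits^r]
\]
sending the formal variable $c_j^{(i)}$ to the $r$-ary poly-operation $(\xi_1,\ldots,\xi_r) \mapsto c_j^A(\xi_i)$, viewed via external product. Since Chern classes are nilpotent on any fixed variety, arbitrary formal power series in the $c_j^{(i)}$'s define well-defined poly-operations, so $\Phi$ is a well-defined $A$-algebra homomorphism; the theorem asserts that it is an isomorphism.

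For injectivity, I would evaluate on the universal element $\mathfrak u_{\vec N} := (\sum_{j=1}^{N_i} z_{i,j}^K)_{i=1}^{r}$. By the Whitney formula, $c_k^A(\sum_{j=1}^{N_i} z_{i,j}^K) = \sigma_k(z_{i,1}^A, \ldots, z_{i,N_i}^A)$, so $\Phi(P)(\mathfrak u_{\vec N}) = P(\sigma_k^{(i)})$; since elementary symmetric polynomials in an unbounded number of variables are algebraically independent over $A$, this forces $P = 0$ as soon as $\Phi(P) = 0$.

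For surjectivity, given $\phi$ I would first extract the candidate $P_\phi$ from $\phi(\mathfrak u_{\vec N})$: by the permutation compatibility in the COT this value is symmetric in each group $\{z_{i,1}^A,\ldots,z_{i,N_i}^A\}$ separately, so by the fundamental theorem of symmetric functions it equals $P_{\vec N}(\sigma_k^{(i)})$ for a unique formal power series $P_{\vec N}$; projection compatibility makes these compatible in $\vec N$, producing a unique $P_\phi \in A[[c_j^{(i)}]]^{\odot r}$. It remains to prove that $\psi := \phi - \Phi(P_\phi)$ is identically zero given that it vanishes on every universal $\mathfrak u_{\vec N}$. For this I would combine the discrete Taylor expansion (Proposition~\ref{prop:taylor}) with the diagonal, Segre and point-embedding pullback compatibilities, expressing $\psi(\alpha)$ on an arbitrary polynomial $\alpha$ in terms of $\psi$ and its iterated derivatives $\partial^s\psi$ evaluated on universal elements, and then induct on arity of the poly-operation.

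The hardest step is this last propagation argument: translating the vanishing of $\psi$ on universal sums of line bundles into vanishing on arbitrary polynomial inputs. This is the algebraic analogue of the topological computation $A^*(BU) = A^*(\mathrm{pt})[[c_1, c_2, \ldots]]$, but one must carry it out using only the five COT compatibilities together with continuity, which jointly have to be rigid enough to pin $\phi$ down from its behaviour on the universal elements alone.
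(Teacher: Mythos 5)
Your high-level strategy agrees with the paper's: both evaluate a poly-operation on the ``universal'' positive sums $\sum_j z_{i,j}$, extract the candidate series in Chern classes via the fundamental theorem of symmetric functions (this is the paper's claim~(i)), and then must prove the remainder vanishes identically. Your injectivity argument is essentially the paper's uniqueness part of claim~(i) and is fine.

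However, there is a genuine gap exactly where you flag ``the hardest step,'' and it is not filled by the tools you list. The difficulty is not ``propagation from universal elements to arbitrary polynomial inputs'' in general, nor is it arity (the paper dispatches the poly-operation case in a two-line remark at the end). The real obstacle is that a general element of $\tilde K_0((\mathbb P^n)^{\times l})$ is a polynomial in the $z_j$'s with \emph{arbitrary integer} coefficients, including negative ones, whereas the Discrete Taylor Expansion together with the diagonal/Segre/point-embedding pullbacks only propagate vanishing across \emph{nonnegative} sums of $z$'s and their pullbacks (all five COT pullbacks preserve positivity of coefficients). Vanishing on every $\mathfrak u_{\vec N}$ therefore does \emph{not} formally imply vanishing on, say, $-z_1$. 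The paper handles this by introducing the alternating-sign universal element $q = z_1 - z_2 + z_3 - z_4 + \cdots$, showing every monomial $\pm z_1\cdots z_r$ (hence every polynomial without constant term) is a COT-pullback of $q$, and then proving $\phi(q)=0$ by an induction on the number of minus signs. That induction step is itself non-obvious: it uses the identity $0 = z_{i+1} - z_{i+1}$ to relate $\partial^i\phi(\mathbf z_i, -z_{i+1})$ to $\partial^{i+n}\phi(\mathbf z_i, z_{i+1}^{\times n}, -z_{i+1})$ for all $n$, and then a divisibility-by-$t_{i+1}^{n+2}$ argument (continuity along the diagonal) to force the limit to be zero. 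None of this is present in your outline, and your sketch as stated would get stuck precisely at $\psi(-z_1)$.

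So: same overall route as the paper, but the key lemma --- extending vanishing on positive universal sums to all of $\tilde K_0$ --- is left as a wish, and the specific mechanism (alternating-sign element, induction on minus signs, divisibility trick) is the actual content of the theorem. You should treat the positive-sum reduction as only the easy half and supply the negative-coefficient argument explicitly.
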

\begin{Rk}
Note that there is no issue of convergence of a series of Chern classes for any particular element of $K_0$.
Due to a discussion after Def. \ref{goct}, Chern classes $c_i^A$ are nilpotent
 and thus a formal power series reduces to a polynomial for each particular variety.
\end{Rk}

\begin{proof}
For simplicity we will assume that $\phi$ is an operation (i.e. 1-ary poly-operation). 
In the end of the proof we explain how to generalize it for arbitrary arity.
We can also assume that $\phi(0)=0$ subtracting the constant operation if needed.

By the COT the operation $\phi$ is determined
by its restriction to products of infinite-dimensional projective spaces. 
Using the direct limit along inclusions as explained in Section \ref{sec:cont},
 $\phi$ is determined by a unique map of sets
$\phi:\ZZ[[z_1, z_2,\ldots, z_i, \ldots ]]\rarr A[[t_1, t_2, \ldots, t_i,\ldots]]$,
where $z_i = z_i^{K_0}$ and $t_i=z_i^A$.
This map has to commute with pull-backs along morphisms between finite products of projective spaces.
These pull-backs induce endomorphisms of the ring $\ZZ[[z_1, z_2,\ldots, z_i, \ldots ]]$
acting as identity on all but finite number of variables. 

Denote by $Q$ the symmetric series $\phi(z_1+z_2+\ldots+z_i+\ldots)\in A[[t_1,\ldots,t_i,\ldots]]$.

Let us prove the following claims from which the theorem follows:
\begin{enumerate}[i)]
\item for any symmetric $Q$ there exist a unique $A$-series of Chern classes $P(c_1,\ldots, c_i, \ldots)$\\
s.t. ${P(c_1,\ldots) (z_1+z_2+\ldots+z_i+\ldots) = Q(t_1,\ldots, t_i, \ldots)}$;

\item if $Q=0$, then the operation $\phi$ is zero itself.
\end{enumerate}

i) Due to the fundamental theorem of symmetric series 
$Q$ can be written as a power series $P$ over $A$ in elementary symmetric series in
$t_i$’s.
However, these elementary symmetric series are exactly
 values of Chern classes $c_i^A(z_1 + z_2 +\ldots)$. 
Starting from $Q$ we have constructed $P$ satisfying the given condition.

ii) Suppose that $Q=0$, and let us show that $\phi$ is zero in two steps. 
First, we will show that any
element in $\ZZ[z_1,\ldots, z_l]\subset \ZZ[[z_1,\ldots, z_l, \ldots]]$ (for all $l>0$) 
without a constant term
can be given as 
a pull-back of $q:=z_1-z_2+z_3-z_4+\ldots+z_{2i-1}-z_{2i}+\ldots$ along some morhism
 between projective spaces. 
As $\phi$ is supported on $\tilde{K}_0$ and
 by the continuity of operations, the value of $\phi$ on polynomials without constant term
 uniquely determines it, and thus $\phi$ is uniquely determined by its value on $q$.
Second, we will show that if $Q=0$, then $\phi(q)=0$.

{\bf Step 1}.
In fact, we will need only those morphisms which appear in the statement of the COT.
Let us write explicitly how the pull-backs along them look like (cf. \cite[5.1]{PS_Vish2}).
Below we consider the maps 
$Id^{\times (r-1)}\times f$ from the 
product of $\mathbb{P}^\infty$ where $f$ is either the diagonal map, the Segre map
or the point embedding. Clearly, 
the correspoding pull-backs act identically on almost all variables $z_i$,
the exceptional cases being writted down explicitly:
\begin{itemize}
\item the partial diagonal: $z_{r+1}\rarr z_{r}$;
\item the partial Segre map $Seg_r: z_r \rarr z_r + z_{r+1} + z_rz_{r+1}$
\item the partial point embeddings: $z_{r+1} \rarr 0$;
\end{itemize}

We also will need to use the action of the symmetric group $\cup_{n=1}^\infty S_n$ 
which acts by permutations on $z_i$.

It is enough to prove that for any $r>0$
we can get monomials $\pm z_1\cdots z_r$ from $q$ using transformations above.
Setting some of the variables to be equal (i.e. using pullbacks along partial diagonals)
 we will thus obtain the value on any monomial of degree $r$ 
 (with plus and minus sign).
As we can apply transformations to different groups of variables independently
and as we can get any monomial, we can obtain any polynomial as well.

Let us prove that $\pm z_1\cdots z_r$ can be obtained from $q$ 
by induction on $r$.

{\bf Base of induction ($r=1$).} 
Setting zero all the variables (i.e. using pull-backs along point inclusions)
 except for $z_1$ or $z_2$ we get either $z_1$ or $-z_2$,
respectively. Using the permuation $(12)$ one gets $-z_1$ from $-z_2$.

{\bf Induction step.}
Applying composition of Segre transformations $Seg_r\circ Seg_{r-1}\circ \cdots \circ Seg_1$ 
to variable $z_1$ we get
$\sigma_1 + \sigma_2 + \ldots + \sigma_{r+1}$ where $\sigma_i$ 
is the elementary symmetric function of degree $i$ in $z_1, \ldots, z_{r+1}$.
Starting from $-z_1$ we would get the same expression with the minus sign.

By induction assumption and using other groups of variables in $q$ we may get
polynomials $-z_{i_1}\cdots z_{i_{l-1}}$ for any $l: 1\le l\le r$  and any set of indexes $i_j$.
Thus, we may cancel all monomials of degree less than $r+1$ in 
the expression $\sigma_1 + \sigma_2 + \ldots + \sigma_{r+1}$
and the induction step is proved.

The proved claim shows that the value of $\phi$ on 
the element $q$ determines the operation uniquely.

{\bf Step 2}. Assume that $Q=0$. Let us show that $\phi(q)=\phi(z_1+z_3+z_5+\ldots-z_2 -z_4-z_6-\ldots)=0$.

By continuity (Section \ref{sec:cont}), it is enough to show that 
$\phi(z_1+z_2+z_3+\ldots+z_n -z_{n+1} -z_{n+2}-\ldots-z_{2n})=0$ for any $n\ge 1$.
However, by the Discrete Taylor Expansion as stated in Prop. \ref{prop:taylor}
this would follow from $\partial^i\phi(\pm z_1, \pm z_2,\ldots, \pm z_{i+1})=0$ for any $i\ge 0$.
Let us prove the latter claim 
by induction on the number of minuses in $(\pm z_1, \pm z_2,\ldots, \pm z_{i+1})$.

{\bf Base of induction (no minuses).} 
By the definition of the derivative (cf. Discrete Taylor Expansion)
we may express $\partial^i\phi(z_1,\ldots,z_{i+1})$ 
as the sum $\sum_{I\subset \{1,\ldots,i+1\}}(-1)^{i+1-|I|}\phi(\sum_{j\in I} z_j)$
which is zero by assumption.

{\bf Induction step}. Assume that the claim is true for at most $k$ minuses.

Note that as $Q$ is zero, $\phi(0)=0$.
It is easy to prove by induction (unrelated to the induction on minuses) that
 $\partial^s \phi(P_1, \ldots, P_{s+1}) =0 $ whenever $P_i=0$ 
for some $i:1\le i \le s+1$.

Let $\mathbf{z}_{i}= (\pm z_1, \pm z_2,\ldots, \pm z_{i})$ have at most $k$ minuses.
We will show now that 
\begin{equation}\label{eq2}
\partial^i\phi(\mathbf{z}_{i}, -z_{i+1}) =
 (-1)^n \partial^{i+n} \phi (\mathbf{z}_{i}, (z_{i+1})^{\times n}, -z_{i+1}),
\end{equation}
for any $n\ge 0$.

Using the identity $0 = z_{i+1} - z_{i+1}$ 
we get that 
\begin{multline}
0=
\partial^{i+n} \phi (\mathbf{z}_{i}, (z_{i+1})^{\times n}, z_{i+1}-z_{i+1})
= \partial^{i+n} \phi (\mathbf{z}_{i}, (z_{i+1})^{\times (n+1)}) +\\
+\partial^{i+n} \phi (\mathbf{z}_{i}, (z_{i+1})^{\times n}, -z_{i+1})
+ \partial^{i+n+1} \phi (\mathbf{z}_{i}, (z_{i+1})^{\times n}, -z_{i+1}).
\end{multline}

The formula \ref{eq2} follows by induction
 since the first summand in the RHS is zero by our assumptions.

Now note that $\partial^i \phi(\pm z_1,\ldots,\pm z_{i+1})$ 
is divisible by $t_1\cdots t_i$ as a series over $A$
(this is again an instance of continuity of operations).
Indeed, setting any of the variables $z_i$ to zero 
(that is, restricting along the respective partial
point embedding)
has to lead to the annihilation of the value.

Restricting along the diagonal $\mathbb{P}^\infty\rarr(\mathbb{P}^\infty)^{\times n+2}$
 on the last $(n + 2)$ factors (i.e, setting the respective variables $z_j$ to be equal),
we obtain that $\partial^{i+n} \phi (\mathbf{z}_{i},z_{i+1}^{\times n}, -z_{i+1})$
has to be divisible by $t_{i+1}^{n+2}$ and the same
should hold for $\partial^i\phi(\mathbf{z}_{i}, -z_{i+1})$. 
As this is true for all $n$ we
get that $\partial^i\phi(\mathbf{z}_{i}, -z_{i+1})=\partial^i\phi(\mathbf{z}_{i}, 0)=0$.
This proves the induction step and the theorem for the case of operations.\\

To generalize this proof to a poly-operation $\psi$
one needs to prove i) and ii) for symmetric series $Q$
defined as the value of $\psi$ on 
$z^{(1)}_1+z^{(1)}_2+z^{(1)}_3+\ldots,z^{(2)}_1+z^{(2)}_2+\ldots, \ldots, z^{(r)}_1+z^{(r)}_2+\ldots$.
 Part ii) is nearly the same,
while in part i) one needs to apply the fundamental theorem of symmetric series
for each set of variables $t^{(j)}_1,t^{(j)}_2, \ldots$
\end{proof}

\subsection{Additive operations from $K_0$ to an oriented theory $A^*$}
In this section
we deduce the description of all additive operations $[\tilde{K}_0,A^*]^{add}$
in terms of series in Chern classes. 
This result is not used anywhere in the paper, 
however it provides some intution about what one could expect 
of additive operations from higher Morava K-theories to Chow groups $[K(n)^*,CH^*\ot\Zp]^{add}$ 
(compare Cor. \ref{cr_k0_add} and Cor. \ref{cr_kn_add}).

Denote by $P_n\in \QQ[c_1,\ldots,c_n]$ a polynomial, 
which is the $n$-th graded component of $\log (1 + c_1 + c_2 + \ldots + c_n)$ multiplied by $n$ 
(variable $c_i$ has degree $i$ here). 
For example, $P_1 = c_1, P_2 = 2c_2+(c_1)^2, P_3 = 3c_3+3c_1c_2+(c_1)^3$.

\begin{Prop}
\begin{enumerate}
\item $P_n\in \ZZ[c_1,\ldots,c_n]$;
\item $P_n$'s produce linearly independent (over $A$) additive operations from $K_0$ to $A^*$;
\item all additive operations from $\tilde{K}_0$ to $A^*$ are 
infinite $A$-linear combinations of $P_n$'s.
\end{enumerate}
\end{Prop}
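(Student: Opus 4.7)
The plan is to prove the three parts in sequence, using Theorem~\ref{basis} to identify operations from $\tilde K_0$ with power series in Chern classes, combined with the splitting principle and Newton's identities.

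For part (1), I formally treat the $c_i$ as elementary symmetric functions in indeterminates $x_1, x_2, \ldots$, so that (with the appropriate sign convention) the logarithm expands as $\sum_i \log(1\pm x_i) = \sum_{n\ge 1} \alpha_n p_n(x)/n$ for $\alpha_n\in\{\pm 1\}$, where $p_n = \sum_i x_i^n$ is the Newton power sum. Thus $P_n = \alpha_n p_n$ expressed in terms of the $c_i$'s, and Newton's identities give $p_n \in \ZZ[e_1,\ldots,e_n]$, proving $P_n \in \ZZ[c_1,\ldots,c_n]$.

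For part (2), additivity of $P_n$ follows from the Whitney formula $c^A(V \oplus W) = c^A(V)\cdot c^A(W)$ valid in any g.o.c.t.\ (since Chern classes are nilpotent, $c^A(V)$ is a unit in $A^*$, and $c^A$ extends to a group homomorphism $K_0 \to (A^*)^\times$). Over $A^*\otimes\QQ$, $\log c^A$ is additive, so each of its graded components (scaled by $n$) is additive too, yielding $P_n(x+y)=P_n(x)+P_n(y)$. By part (1) this is a universal polynomial identity with integer coefficients, hence descends to $A^*$ for any $A$. For linear independence, I evaluate a putative relation $\sum_n a_n P_n = 0$ on the class $\xi = \sum_{i=1}^N ([L_i]-1) \in \tilde K_0((\mathbb{P}^\infty)^{\times N})$. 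Using $P_n([L]-1)=c_1^A(L)^n$ (direct substitution $c_1=t$, $c_k=0$ for $k>1$) together with additivity, one gets $\sum_n a_n \sum_{i=1}^N t_i^n = 0$ in $A[[t_1,\ldots,t_N]]$; extracting the coefficient of $t_1^n$ forces $a_n=0$.

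For part (3), let $\phi \in [\tilde K_0, A^*]^{add}$ and define $g(t) := \phi([\mathcal{O}(1)]-1) \in tA[[t]]$ on $\mathbb{P}^\infty$, with expansion $g(t) = \sum_{n\ge 1} g_n t^n$. The operation $\psi := \phi - \sum_n g_n P_n$ is additive and well-defined (the sum converges on each variety since Chern classes are nilpotent, so only finitely many summands are nonzero at any fixed class), and vanishes on $[L]-1$ for every line bundle $L$ by naturality from $\mathbb{P}^\infty$. Given any $\xi \in \tilde K_0(X)$, write $\xi = [V]-[W]$ with equal ranks; by the splitting principle, pulling back along an iterated projective bundle $\tilde X \to X$ splits both $V$ and $W$ into line bundles, so $\xi|_{\tilde X}$ is a sum of classes $\pm([L]-1)$. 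Additivity gives $\psi(\xi|_{\tilde X}) = 0$, and injectivity of $A^*(X) \to A^*(\tilde X)$ from the (PB) axiom yields $\psi(\xi) = 0$. Hence $\phi = \sum_n g_n P_n$.

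The main obstacle is the splitting-principle reduction in part (3), which requires the projective bundle theorem (to guarantee injective pullback to the flag variety) and Chern class nilpotence (to ensure convergence of the infinite sum $\sum_n g_n P_n$ as an operation on each variety).
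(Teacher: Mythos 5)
Parts (1) and (2) are correct, though your route differs mildly from the paper's: for integrality you use Newton's identities to identify $P_n$ with a power sum, while the paper differentiates $\log(1+c_1t+\cdots)$ in $t$ and uses integrality of $(\log)'(1+x)$; for linear independence you evaluate on $\sum_i([L_i]-1)$, while the paper invokes Theorem~\ref{basis}. Your additivity argument via the Whitney formula is essentially the paper's appeal to the Cartan formula.

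In part (3) there is a gap. You assert that $\psi:=\phi-\sum_n g_n P_n$ ``vanishes on $[L]-1$ for every line bundle $L$ by naturality from $\mathbb{P}^\infty$.'' Naturality only gives this for $L=f^*\mathcal{O}(1)$, i.e.\ for globally generated $L$. But the line bundles produced by the splitting principle on a flag bundle $\tilde X\to X$ are tautological subquotients and need not be globally generated, so the splitting-principle step does not close as written. The gap is fixable: the identity $\prod_{i=1}^k([L_i]-1)=\sum_{S\subseteq\{1,\dots,k\}}(-1)^{k-|S|}\left([\bigotimes_{i\in S}L_i]-1\right)$ shows $\psi$ kills such products for globally generated $L_i$, and then writing $L=M\otimes N^{-1}$ with $M,N$ very ample and expanding $(1+([N]-1))^{-1}$ as a Chern-nilpotent series handles general $L$. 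But this extra work is essential and is not a consequence of naturality alone. The paper avoids the issue entirely by reading off the CAOT: the defining system expresses $G_{l+1}$ in terms of $G_l$, so an additive operation is pinned down by $G_1\in A[[z]]\cdot z$ alone, and $P_n$ is the one with $G_1=z^n$.

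One cosmetic point: with the paper's stated definition of $P_n$ one actually has $P_n([L]-1)=(-1)^{n-1}c_1^A(L)^n$; the paper's displayed examples of $P_2,P_3$ carry sign errors which you have inherited. The coefficients in your formula $\phi=\sum_n g_nP_n$ therefore need alternating signs, but this does not affect the substance.
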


As was noted previously, Chern classes are nilpotent,
thus there is no issue of convergence of an infinite linear combination of $P_n$'s
for values on each particular variety.

\begin{proof}

1. Consider $f(t)=\log (1+ c_1t +c_2t^2+c_3t^3+\ldots)$ as a polynomial in $t$ 
over the ring $\QQ[c_1,\ldots, c_i,\ldots]$.
 We are interested in the coefficient of $t^n$ in $f$ as a polynomial of $c_i$'s.

Note that $f'(t) = \log'(1+c_1t+c_2t^2+\ldots) (c_1+2c_2t+3c_3t^2+\ldots)$ is an integral series,
because $\log'(1+x)$ is integral. As the coefficient of $t^n$ is multiplied by $n$
after differentiation the claim follows.

It is easy to see that $P_n$ contains $(c_1)^n$ as a summand,
so the common divisor of coefficients in $P_n$ equals to 1.

2. It follows from the Cartan's formula that $P_n$'s produce additive operations
from $K_0$ to $A^*$.
Any $A$-linear relation will give a non-trivial relation between Chern classes 
which would contradict Theorem \ref{basis}.

3. From the CAOT it follows that
any additive operation from $\tilde{K}_0$ to $A^*$ is determined 
by symmetric polynomials $G_l\in A[[z_1,\ldots, z_l]] \cdot \prod_{i=1}^l z_i$ for each $l\ge 1$
which  satisfy the following equations:

$$ G_l(z_1,z_2, \ldots, z_{l-1}, F_A(x,y)) = G_l(z_1,z_2,\ldots, z_{l-1}, x) + G_l(z_1,z_2,\ldots, z_{l-1}, y)
 + G_{l+1}(z_1, z_2, \ldots,z_{l-1},x,y).$$

From this equation one may express $G_{l+1}$
in terms of $G_l$, and by induction the operation is uniquely determined by the series $G_1$.
We will show now that for the operation $P_n$ the corresponding series $G_1$ is just $z^n$.
Therefore any series $G_1\in A[[z]]\cdot z$ can be realized as 
the value of some infinite linear combination of $P_n$'s.

By definition $G_1$  equals to $\phi(c_1^{K_0}(\mathcal{O}(1)))$ expressed 
as a series over $A$ in $z:=c_1^{A}(\mathcal{O}(1))$. 
As $c_1^{K_0}(\mathcal{O}(1))=\mathcal{O}(1)-\mathcal{O}$,
it is easy to see that for $c^A_1$ the series $G_1$ equals to $z$ 
and for $c^A_i$, $i>1$ it equals to 0.
Therefore for operations $P_n$ the series $G_1$ is equal to $z^n$. 
\end{proof}

\begin{Rk}
This proposition is a generalization of a result of A.Vishik (\cite[Th. 6.8]{PS_Vish1})
 classifiying additive operations from $K_0$ to itself. 
Vishik has shown that any additive operation
 is a unique infinite linear combination
of operations $\Upsilon_k:=\sum_{i=1}^k(-1)^{i-1}\binom{k}{i}\psi_{i}$,
where $\psi_i$ are Adams operations.
The proofs of Vishik's result and of ours are quite similar
and one can check that $\Upsilon_k$ equals to a polynomial $P_k$ on Chern classes $c_i^{K_0}$. 
\end{Rk}

\begin{Cr}\label{cr_k0_add}
The natural map $[K_0, CH^i\ot\Zp]^{add}/p\rarr [K_0, CH^i/p]^{add}$ is
an isomorphism of 1-dimensional vector spaces.

In particular, take $\phi_i, \phi_{ip}$ to be two generators of
additive integral operations from $K_0$ to $CH^i\ot\Zp$, $CH^{ip}\ot\Zp$ respectively.
Then $(\phi_i)^p \equiv a\phi_{ip} \mod p$ for some $a\in\F{p}$.
\end{Cr}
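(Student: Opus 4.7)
The plan is to apply the Proposition above to the two g.o.c.t.'s $CH^*\ot\Zp$ and $CH^*/p$ (both free theories, for the additive formal group law over $\Zp$ and $\F{p}$ respectively) and then compare the resulting descriptions of additive operations in the degree-$i$ component.

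First I would reduce to $\tilde K_0$: splitting $K_0=\tilde K_0 \oplus \ZZ$ as presheaves of abelian groups, any additive operation from the constant summand $\ZZ$ to $CH^i$ (with $i\ge 1$) vanishes, because its value on any $X$ is a pull-back along $X\ra \Spec k$ of an element of $CH^i(\Spec k)=0$. Thus $[K_0,CH^i\ot\Zp]^{add}=[\tilde K_0, CH^i\ot\Zp]^{add}$, and likewise modulo $p$.

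Next I would invoke the Proposition. Both spaces become $\Zp$- (resp.\ $\F{p}$-) linear combinations of the operations $P_n$, $n\ge 1$. Since the coefficient ring $CH^*\ot\Zp(\Spec k)=\Zp$ is concentrated in degree $0$ and $P_n$ has weighted degree $n$ (with $\deg c_j=j$), Remark \ref{rem:add_grad} isolates $P_i$ as the unique basis element landing in $CH^i$. Hence $[K_0,CH^i\ot\Zp]^{add}=\Zp\cdot P_i$ is free of rank $1$. Running the same argument over $\F{p}$, and using that $P_n$ contains $(c_1)^n$ with coefficient $1$ and is therefore nonzero modulo $p$, gives $[K_0,CH^i/p]^{add}=\F{p}\cdot \bar P_i$. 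The reduction map sends $P_i\mapsto \bar P_i$, so it is an isomorphism of $1$-dimensional $\F{p}$-vector spaces, proving the first claim.

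For the second assertion, the key observation is that $(\phi_i)^p$ is additive modulo $p$: the freshman's dream $(u+v)^p\equiv u^p+v^p\pmod p$ together with the additivity of $\phi_i$ yields $(\phi_i)^p(x+y)\equiv (\phi_i)^p(x)+(\phi_i)^p(y)\pmod p$. Hence $(\phi_i)^p\bmod p$ lies in $[K_0,CH^{ip}/p]^{add}$, which the first part identifies with $\F{p}\cdot \bar\phi_{ip}$, giving $(\phi_i)^p\equiv a\phi_{ip}\pmod p$ for some $a\in\F{p}$. No step is a real obstacle --- the only thing to watch is that the coefficient ring in the Proposition is $A=A^*(\Spec k)=\Zp$, so that the ``$A$-linear combinations'' become honest scalar combinations and the grading argument goes through cleanly.
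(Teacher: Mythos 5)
Your proof is correct and fills in exactly the argument the paper intends: the Corollary is placed immediately after Proposition 2.3 precisely so that it follows from the $A$-linear description of additive operations as combinations of $P_n$'s together with the grading isolation via Remark \ref{rem:add_grad}, and the fact that $P_i$ contains $(c_1)^i$ with unit coefficient. The freshman's-dream step for the second assertion is the standard argument and needs nothing more.
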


\section{Localisation of theories and Adams operations}\label{sec_adams}

\subsection{Adams operations are universally central.}

The following proposition was communicated to the author by A. Vishik.

\begin{Prop}[Vishik, cf. {\cite[Th. 6.15]{PS_Vish1}}]
Let $A^*$ be a theory of rational type. 
Then there exist $A$-linear multiplicative Adams operations $\psi^A_i:A^*\rarr A^*$ for $i\in \ZZ$,
which are uniquely defined by the property $\psi^A_i(c_1^A(L))=c_1^A(L^{\ot i})$,
where $L$ is any line bundle over any smooth variety.

Moreover, Adams operations do not depend on the orientation.
\end{Prop}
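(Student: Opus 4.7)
The plan is to apply the Classification of Operations Theorem (COT), in its $1$- and $2$-ary forms, to build $\psi_i^A$ from its prescribed restrictions to products of infinite-dimensional projective spaces. First, on each $A^*((\mathbb{P}^\infty)^{\times l}) = A[[z_1^A,\ldots,z_l^A]]$, define $\psi_i^{A,(l)}$ to be the continuous $A$-algebra endomorphism sending each $z_j^A$ to the $i$-series $[i]_{F_A}(z_j^A)$ (for $i<0$, interpret $[i]_{F_A}$ using the formal inverse supplied by $F_A$). I would then verify compatibility with each of the five families of pullbacks listed in the COT. For permutations, partial point embeddings, partial projections, and partial diagonals, commutation is immediate from the action on generators together with the fact that each $\psi_i^{A,(l)}$ is an $A$-algebra map. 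The substantial check is the partial Segre, which pulls back $z_r^A$ to $F_A(z_r^A, z_{r+1}^A)$; commutativity amounts exactly to the identity $[i]_{F_A}(F_A(x,y)) = F_A([i]_{F_A}(x), [i]_{F_A}(y))$, which holds because $[i]_{F_A}$ is an endomorphism of $F_A$. The COT then produces an operation $\psi_i^A : A^* \to A^*$ of presheaves of sets.

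To promote $\psi_i^A$ to an $A$-linear ring homomorphism I would invoke the $2$-ary COT: the assignments $(x,y)\mapsto \psi_i^A(xy)$ and $(x,y)\mapsto \psi_i^A(x)\psi_i^A(y)$ are both binary poly-operations $A^*\times A^* \to A^*\circ\prod^2$, and by construction they agree on values on products of projective spaces (both compute as the $A$-algebra map on $A[[z_1^A,\ldots,z_{l_1+l_2}^A]]$), hence coincide everywhere. The same argument with $x+y$ in place of $xy$, and with $ax$ for $a\in A$, gives additivity and $A$-linearity. The prescribed behaviour $\psi_i^A(c_1^A(L)) = c_1^A(L^{\otimes i})$ then follows since every line bundle on a smooth variety is pulled back from some $\mathcal{O}(1)$ on a $\mathbb{P}^N$, and $[i]_{F_A}(c_1^A(\mathcal{O}(1))) = c_1^A(\mathcal{O}(1)^{\otimes i})$ by the very definition of $F_A$. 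For uniqueness, any $A$-linear multiplicative operation with the prescribed action on first Chern classes must send $z_j^A$ to $[i]_{F_A}(z_j^A)$, hence by multiplicativity and $A$-linearity agree with $\psi_i^A$ on each polynomial ring $A[z_1^A,\ldots,z_l^A]$, hence by the continuity result of Section \ref{sec:cont} on their completions, hence globally by the COT.

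Finally, for orientation-independence: if $A^*$ carries a second orientation, the new first Chern classes take the form $\tilde c_1^A(L) = g(c_1^A(L))$ for some invertible $g \in A[[t]]$ with $g(0)=0$, and the associated formal group law is the conjugate $\tilde F_A(x,y) = g(F_A(g^{-1}(x), g^{-1}(y)))$; a short computation yields $[i]_{\tilde F_A} = g \circ [i]_{F_A} \circ g^{-1}$. Writing $\tilde z_j^A = g(z_j^A)$, the Adams operation built from the second orientation sends $\tilde z_j^A$ to $[i]_{\tilde F_A}(\tilde z_j^A) = g([i]_{F_A}(z_j^A))$, so on the underlying element it acts the same way as the first-orientation operation; by the uniqueness established above the two operations coincide. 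The main technical hurdle in the whole argument is the partial-Segre verification, which ultimately is just the FGL-endomorphism property of $[i]_{F_A}$; the remaining steps are routine applications of the COT and of continuity.
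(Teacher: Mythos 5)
Your proposal is correct and follows essentially the same route as the paper, which defers the existence of Adams operations to Vishik's construction (multiplicative operations corresponding to the $i$-series endomorphism of $F_A$, built via the COT), establishes uniqueness from the COT, and derives orientation-independence from $A$-linearity exactly as you do by commuting $\psi_i^A$ past the change-of-orientation series $g\in A[[t]]$. The only difference is that you reconstruct in detail the COT verification that the paper outsources to \cite[Th.~6.15]{PS_Vish1}; the Segre check reducing to the fact that $[i]_{F_A}$ is an endomorphism of $F_A$ is indeed the crux, and the rest is bookkeeping.
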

\begin{proof}
The existence of Adams operations was proved in \cite{PS_Vish1}
and uniqueness follows from the COT.

Using the fact that Adams operations
are $A$-linear (i.e., that $\psi_i$ acts identically on $A$),
 it is easy to see that they are stable under reorientation.
\end{proof}

\begin{Prop}\label{adams}
Let $\phi:A^*\rarr B^*$ be an operation between two theories of rational type.

Then $\phi$ commutes with Adams operations, i.e. $\phi \circ \psi_k^A = \psi_k^B \circ \phi$.
\end{Prop}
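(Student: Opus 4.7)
The plan is to use the COT to reduce the desired equality to the restriction of both compositions to products of projective spaces, where they can be identified with the pullback along an explicit ``$k$-th tensor power'' ind-morphism. Since $\phi\circ\psi_k^A$ and $\psi_k^B\circ\phi$ are operations $A^*\to B^*$, by the COT it suffices to show that they agree as maps of sets $A^*((\mathbb{P}^\infty)^{\times l})\to B^*((\mathbb{P}^\infty)^{\times l})$ for every $l\ge 0$, that is, as functions on formal power series rings $A[[z_1^A,\ldots,z_l^A]]$.

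Consider for each $l\ge 0$ the morphism of ind-objects $\delta_k:(\mathbb{P}^\infty)^{\times l}\to (\mathbb{P}^\infty)^{\times l}$ whose $i$-th component is the composition of the $k$-fold diagonal $\mathbb{P}^\infty\to(\mathbb{P}^\infty)^{\times k}$ with the iterated Segre embedding $(\mathbb{P}^\infty)^{\times k}\to \mathbb{P}^\infty$; truncated at level $n$ this realises an honest morphism of smooth varieties $(\mathbb{P}^n)^{\times l}\to(\mathbb{P}^N)^{\times l}$, where $N=(n+1)^k-1$. By construction $\delta_k^*\mathcal{O}(1)_i = \mathcal{O}(1)_i^{\otimes k}$ for every $i$, so for any g.o.c.t.\ $C^*$ the induced pullback $\delta_k^*$ is the continuous $C$-algebra endomorphism of $C[[z_1^C,\ldots,z_l^C]]$ determined by $z_i^C\mapsto [k]_{F_C}(z_i^C)$. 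On the other hand, by its defining property combined with $A$-linearity and multiplicativity, $\psi_k^A$ restricted to $A^*((\mathbb{P}^\infty)^{\times l})$ is an $A$-algebra endomorphism sending each $z_i^A=c_1^A(\mathcal{O}(1)_i)$ to $c_1^A(\mathcal{O}(1)_i^{\otimes k})=[k]_{F_A}(z_i^A)$; hence it coincides with $\delta_k^*$, and likewise $\psi_k^B=\delta_k^*$ on $B^*$.

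Since $\phi$ is a morphism of presheaves of sets on the category of smooth varieties, it commutes with pullbacks along arbitrary morphisms, and in particular with $\delta_k^*$ (truncated to each finite level). Combining,
$$\phi\circ\psi_k^A=\phi\circ\delta_k^*=\delta_k^*\circ\phi=\psi_k^B\circ\phi$$
on $A^*((\mathbb{P}^\infty)^{\times l})$ for every $l\ge 0$, and the COT yields the global equality of operations. The only non-formal step is the identification $\psi_k^C=\delta_k^*$ on products of projective spaces, which rests on both maps being continuous $C$-algebra endomorphisms agreeing on the topological generators $z_i^C$; everything else is a direct consequence of $\phi$ being a morphism of presheaves and of the COT.
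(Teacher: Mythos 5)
Your proof is correct and takes essentially the same route as the paper: both reduce via the COT to products of projective spaces, identify $\psi_k$ there with the pullback along the $k$-Veronese (diagonal followed by Segre) map, and conclude because $\phi$, being a morphism of presheaves, commutes with all pullbacks. You merely spell out more explicitly the identification $\psi_k = \delta_k^*$ via agreement of continuous algebra endomorphisms on the generators $z_i$, which the paper leaves implicit.
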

\begin{proof}
According to the COT we may check the 
equality $\phi \circ \psi_k^A = \psi_k^B \circ \phi$ on products of projective spaces.
However, the action of Adams operations $\psi_k$ on products of projective spaces $(\mathbb{P}^\infty)^r$
is nothing more than the pull-back along the composition of Segre maps with diagonals 
(i.e. products of $k$-Veronese maps) and, thus, commutes with any operation.
\end{proof}

\subsection{Localisation of non-additive operations.}

\begin{Prop}\label{prolong}
Let $S$ be a subset in $\mathbb{Z}\setminus\{0\}$, denote by $\ZZ_S:=S^{-1}\ZZ$ the localisation
of integers in $S$.

Let $A^*$ be a theory of rational type s.t. the map $A\rarr A\ot\ZZ_S$ is injective,
and let
$B^*$ be any g.o.c.t such that $S$ is invertible in $B$.

Then the natural map $[\tilde{A}^*\ot\ZZ_S, B^*] \rarr [\tilde{A}^*, B^*]$ is an isomorphism.
\end{Prop}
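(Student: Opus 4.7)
The plan is to translate the statement, via Vishik's Classification of Operations Theorem (COT), into a comparison of set maps on products of infinite projective spaces, and then to exploit the automatic commutation of any such map with the Adams endomorphisms, which in this setting are pullbacks along Veronese embeddings and hence are themselves among the pullbacks appearing in the COT.

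By the COT, an operation $\tilde\phi\in[\tilde A^*\ot\ZZ_S,B^*]$ is the same datum as a family of set maps $\tilde\phi_l: A_S[[z_1,\ldots,z_l]]^+\rarr B[[t_1,\ldots,t_l]]$ (with $A_S:=A\ot\ZZ_S$) compatible with the pullbacks along partial permutations, diagonals, Segre maps, projections and point embeddings; similarly for $\phi\in[\tilde A^*,B^*]$, with source $A[[z_1,\ldots,z_l]]^+$. The injectivity of $A\rarr A_S$ gives an inclusion $A[[z_1,\ldots,z_l]]^+\subset A_S[[z_1,\ldots,z_l]]^+$, so the restriction map of the statement corresponds on the COT side to the restriction of each $\tilde\phi_l$ to this subring. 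Since $\psi_n^A$ is the pullback along the $n$-th Veronese $\mathbb{P}^\infty\rarr\mathbb{P}^\infty$ (a composition of Segre maps and diagonals), every such $\tilde\phi_l$ automatically satisfies $\tilde\phi_l\circ\psi_n^A=\psi_n^B\circ\tilde\phi_l$; this is the geometric content of Proposition \ref{adams} and needs no rational-type hypothesis on $B^*$. For $n\in S$, both $\psi_n^A$ on $A_S[[z_1,\ldots,z_l]]$ and $\psi_n^B$ on $B[[t_1,\ldots,t_l]]$ are ring automorphisms, since their linear parts are units; the inverses are the substitutions $z_i\mapsto[1/n]_{F_A}(z_i)$ and $t_i\mapsto[1/n]_{F_B}(t_i)$.

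For uniqueness, I fix $\xi\in A_S[[z_1,\ldots,z_l]]^+$ and $m\geq 1$. By the continuity property from Section \ref{sec:cont}, $\tilde\phi_l(\xi)\bmod F_B^m$ depends only on $\xi\bmod F_A^m$, a polynomial whose finitely many $A_S$-coefficients share a common denominator $N\in S$. Since $\psi_N^A$ scales the lowest-degree-$d$ part of a series by $N^d$ modulo strictly higher degrees, iterating $\psi_N^A$ inside the truncation carries $\xi\bmod F_A^m$ into the image of $A[[z_1,\ldots,z_l]]^+$, and the Adams commutation together with the invertibility of $\psi_N^B$ then expresses $\tilde\phi_l(\xi)\bmod F_B^m$ in terms of the values of $\phi_l$ on $A$-integral polynomials. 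Thus any two extensions of $\phi_l$ coincide modulo every $F_B^m$ and hence coincide. Existence is then obtained by taking this same formula as the definition: continuity ensures consistency across $m$, the Adams commutation of $\phi_l$ itself ensures agreement with the given $\phi_l$ on $A[[z_1,\ldots,z_l]]^+$, and naturality of the Segre, diagonal and Veronese maps ensures compatibility with the remaining COT pullbacks.

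The main obstacle will be the denominator bookkeeping inside this iteration: $\psi_N^A$ scales the current lowest-degree part exactly by $N^d$, but its contribution to the next higher degree uses the coefficients of $[N]_{F_A}(z)$ beyond the linear term and generally introduces new $A_S$-denominators one degree up. Consequently the ``clearing of denominators'' proceeds degree by degree rather than all at once; the induction is performed within each finite truncation $F_A^m$ (where only finitely many denominators appear), alternating applications of $\psi_N^A$ with appeals to the continuity proposition.
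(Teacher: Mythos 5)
Your proof is correct and takes essentially the same approach as the paper's: both reduce via the COT and continuity to finite truncations of the values on products of projective spaces, then use the Adams operations (realized as pullbacks along Veronese embeddings) to clear denominators degree by degree, and finally use the invertibility of $\psi^B_N$ for $N\in S$ to determine the extension uniquely and to define it. The one small difference is presentational: the paper first replaces $B^*$ by the rational-type theory $\Omega^*\ot_{\mathbb L}B$ so that Proposition \ref{adams} applies verbatim, whereas you observe directly that on products of projective spaces the commutation with $\psi^B_N$ needs no rational-type hypothesis on the target, since $\psi^B_N$ there is just the Veronese pullback — a harmless simplification of the same argument.
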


This Proposition is obviously true for additive operations, though to deal with non-additive ones
we use the COT. 

\begin{proof}
From the COT it follows that any operation from $A^*$ to $B^*$
factors through a theory of rational type $\Omega^*\ot_{\mathbb{L}} B$.
Thus, without loss of generality we may assume that $B^*$ is of rational type as well. 

Let $\phi$ be any operation from $\tilde{A}^*$ to $B^*$.
We need to show that there exists unique $\bar{\phi}:\tilde{A}^*\ot\ZZ_S\rarr B^*$,
s.t. its composition with the natural map 
$\tilde{A}^* \rarr \tilde{A}^*\ot\ZZ_S$ is equal to $\phi$.

Denote by $A_{r,n}:=\tilde{A}^*((\mathbb{P}^n)^{\times r})$ a factor ring of $A[[z_1,\ldots, z_r]]$
by the ideal of power series of degree $\ge(n+1)$, 
where $z_i$ denotes the first Chern class
of the line bundle $\mathcal{O}(1)_i$ (cf. section \ref{sec:cont}).
By the COT and continuity of operations $\bar{\phi}$ 
is determined by its restriction to maps of sets from $A_{r,n}\ot\ZZ_S$ to $B_{r,n}$ for all $r,n$.

We claim that for any $P\in A_{r,n}\ot \ZZ_S$ there exists $M\in \ZZ_S^\times$, s.t.
 $\psi^A_M (P) \in A_{r,n}$.
Recall that the 
operation $\psi^A_M$ is multiplicative and $\psi^A_M(z)=Mz+ Q(z)$ where $Q\in A[[z]]\cdot z^2$.
Write $P=P_{< k}+P_k + P_{>k}$ where summands are of degree less than $k$, exactly $k$ and bigger than $k$,
respectively. Assume that summands of degree less than $k$ have coefficients from $A$,
i.e. $P_{< k}\in A_{r,n}$.
Let $d$ be the common denominator of coefficients of $P_k$.
Apply $\psi^A_d$ to $P$. The polynomial $\psi^A_d(P_{< k})$ still has coefficients in $A$,
the polynomial $\psi^A_d(P_{>k})$ still has degree bigger than $k$, 
and at the same time the polynomial $\psi^A_d(P_{k})$ has its coefficients in degree $k$ 
multiplied by $d^k$ 
and thus lying in $A$.
Therefore $\psi^A_{d}(P)$ has its summands of degree less than $k+1$ lying in $A$.
Iterating this procedure and using the fact that $\psi^A_{m}\circ \psi^A_{n}=\psi^A_{mn}$, 
the claim is proved. 

By Prop. \ref{adams} Adams operations commute with any operation, 
$\bar{\phi}(\psi^A_M (P)) = \psi^B_M\circ \bar{\phi}(P)$. 
On the other hand $\psi^B_M$ is an invertible operation when $M$ is invertible in $B$.
Therefore this equality allows to express $\bar{\phi}(P)$ in terms of $\phi$ uniquely,
which proves the uniqueness of $\bar{\phi}$. 

One can define $\bar{\phi}$ by the procedure above.
It is enough to show that the maps $\bar{\phi}:A_{r,n}\rarr B_{r,n}$ 
commute with the restriction maps of the list of morphisms in the COT
(Segre, diagonals, etc.).
However, this follows quite formally as $\phi$ and Adams operations commute 
with pull-backs along all morphisms between projective spaces.
\end{proof}

\begin{Prop}\label{ch-base}
Let $K$ be a theory of rational type with the ring of coefficients being a subring in $\mathbb{Q}$.

Then any $r$-ary poly-operation from $\tilde{K}$ to $CH^*\ot\QQ$ 
can be uniquely written as a series in external products of monomials in $\{ch_i\}_{i\ge 1}$,
where $ch:K\rarr CH^*\ot\QQ$ is the unique stable invertible multiplicative operation 
aka Chern character (cf. \cite[Th. 3.7, Prop. 3.8]{PS_Vish1}).

In other words, $\left[\tilde{K}^{\times r},CH^*_{\QQ}\circ \prod^r\right] 
= \QQ[[ch_1,\ldots, ch_i,\ldots]]^{\odot r}$.
\end{Prop}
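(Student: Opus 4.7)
My plan is to reduce the statement to Theorem \ref{basis} via the Chern character, in two steps. First, I would extend Proposition \ref{prolong} from unary to $r$-ary poly-operations. Its proof rests only on the COT/continuity machinery together with Proposition \ref{adams}, and Adams operations on a product of theories act factor-wise while still commuting with any poly-operation; the same argument therefore yields, under the hypotheses of that proposition, a bijection
\[
[(\tilde A^*\ot\ZZ_S)^{\times r}, B^* \circ \textstyle\prod^r] \xrightarrow{\sim} [(\tilde A^*)^{\times r}, B^* \circ \textstyle\prod^r].
\]
Applied to $A=K$ and $B=CH^*_\QQ$, this allows us to replace the source by $\widetilde{K\ot\QQ}$.

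Second, over $\QQ$ every formal group law is strictly isomorphic to the additive one, so the Chern character $\overline{ch}\colon K\ot\QQ \xrightarrow{\sim} CH^*_\QQ$ of \cite[Prop.~3.8]{PS_Vish1} is an isomorphism of theories. Precomposing each factor with $\overline{ch}^{-1}$ identifies the previous space of poly-operations with $[(\widetilde{CH^*_\QQ})^{\times r}, CH^*_\QQ \circ \prod^r]$ and sends each $ch_i = \mathrm{pr}_i \circ \overline{ch}$ to the graded projector $\mathrm{pr}_i\colon CH^*_\QQ \to CH^i_\QQ$. Running the very same two steps with $K_0$ in place of $K$ (using $\overline{ch}^{K_0}\colon K_0\ot\QQ \xrightarrow{\sim} CH^*_\QQ$) and combining with Theorem \ref{basis} (which gives free generation of $[(\tilde K_0)^{\times r}, CH^*_\QQ \circ \prod^r]$ by external products of the Chern classes $c_i^{CH}$), one deduces that $[(\widetilde{CH^*_\QQ})^{\times r}, CH^*_\QQ \circ \prod^r]$ is itself a formal power series algebra on countably many generators $\tilde c_i := c_i^{CH}\circ (\overline{ch}^{K_0})^{-1}$, one for each graded degree $i\ge 1$.

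What remains, and what I expect to be the main obstacle, is to verify that $\{\mathrm{pr}_i\}_{i\ge 1}$ also form a topological free basis of the same ring. Since both $\tilde c_i$ and $\mathrm{pr}_i$ are operations landing in $CH^i_\QQ$, the change of variables between them is graded and can be checked invertible degree by degree. Explicitly, using that $\overline{ch}^{K_0}(c_1^{K_0}(\mathcal O(1))) = \exp(z^{CH})-1$, a direct computation shows $\tilde c_i = \lambda_i \cdot \mathrm{pr}_i + (\text{terms factoring through strictly lower Chern classes})$ for some $\lambda_i \in \QQ^\times$, so the substitution is formally invertible. Back-transporting through $\overline{ch}^K$ then gives the claimed description of $[(\tilde K)^{\times r}, CH^*_\QQ \circ \prod^r]$ as a formal power series algebra on external products of the $ch_i$'s.
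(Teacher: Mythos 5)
Your argument is essentially the paper's own: both reduce to the $K_0$ case via a stable multiplicative isomorphism over $\QQ$ (you factor it through $CH^*\ot\QQ$, the paper composes the two Chern characters to identify $\tilde K_0\ot\QQ$ with $\tilde K\ot\QQ$ directly), invoke Theorem~\ref{basis}, and use the Newton/Chern-character identity $(\log(1+c_{tot}))_n=(n-1)!\,ch_n$ to make the triangular change of basis between $\{c_i\}$ and $\{ch_i\}$ invertible over $\QQ$. You are, if anything, slightly more careful than the text in flagging that Proposition~\ref{prolong} must first be extended from unary to $r$-ary poly-operations -- a point the paper's proof uses without comment, and which does require rerunning the Adams-operation argument with the $M$-Veronese maps applied simultaneously in each factor.
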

\begin{proof}
By Prop. \ref{prolong} the natural map
 $[\tilde{K}\ot\QQ, CH^*\ot\QQ]\rarr [\tilde{K}, CH^*\ot\QQ]$ is an isomorphism. 
As any two formal group laws over $\QQ$-algebras are isomorphic,
there exists the unique stable invertible multiplicative operation
yielding an isomorphism
$\tilde{K}_0\ot\QQ \cong \tilde{K}\ot\QQ$, 
which, however, does not respect the push-forward structure.
Obviously, it maps components of the Chern character
 $K\ot\QQ \rarr CH^*\ot \QQ $ to components of 
the Chern character $K_0\ot \QQ \rarr CH^*\ot\QQ$. 
Thus, it is enough to prove the statement for $K=K_0$.

Note that there is a standard equality $(\log (1+c_{tot}))_n = (n-1)!ch_n$ 
between operations from $K_0$ to $CH^n\ot\QQ$,
where $\log (1+x) = x+\frac{1}{2}x^2+\frac{1}{3}x^3+\ldots$ 
and $c_{tot}=c_1+c_2+c_3+\ldots$ is the total Chern class.
Using it one can uniquely express Chern classes as polynomials
in the components of the Chern character $ch_n$ and vice versa.
The claim now follows from Theorem \ref{basis}.
\end{proof}

\begin{Cr}\label{cor-dim_op}
The dimension of the space of operations $[K, CH^i\ot \QQ]$ equals to $p(i)$,
where $p(i)$ is the number of partitions of $i$.

The dimension of the space of $r$-ary poly-operations from $K$ to $CH^i\ot\QQ$ does not depend on $K$
(and can be interpreted as a number of some partition-type objects).
\end{Cr}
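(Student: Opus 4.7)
The corollary is a direct combinatorial consequence of Proposition \ref{ch-base}, which identifies the ring of $r$-ary poly-operations $[\tilde K^{\times r}, CH^*_\QQ \circ \prod^r]$ with the formal power series ring $\QQ[[ch_1, ch_2, \ldots]]^{\odot r}$ whose generators $ch_j$ live in Chow-degree $j$. My plan is to extract the dimension of the $CH^i$-summand using the grading and then address the mild discrepancy between $\tilde K$ and $K$.

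For the first assertion, I would apply Remark \ref{rem:grad} to project the ring of operations onto its $CH^i$-component: this is freely spanned over $\QQ$ by monomials $\prod_{j\geq 1} ch_j^{a_j}$ subject to the single constraint $\sum_j j\cdot a_j = i$. Sending such an exponent tuple $(a_j)$ to the partition whose multiplicity of the part $j$ equals $a_j$ is a bijection with the set of partitions of $i$, so the dimension is $p(i)$. For the $r$-ary case, the same grading argument identifies the $CH^i$-component of $\QQ[[ch_1,\ldots]]^{\odot r}$ with the direct sum over ordered decompositions $i = d_1+\cdots+d_r$ of tensor products of homogeneous pieces, giving dimension $\sum_{d_1+\cdots+d_r=i}\prod_k p(d_k)$. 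Since this count involves only $i$, $r$ and the partition function, the dimension is manifestly $K$-independent, and it may be read as the number of ordered $r$-tuples of partitions of total size $i$.

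The only non-tautological point is that Proposition \ref{ch-base} is phrased for $\tilde K$ while the corollary uses $K$. For $i\geq 1$ the two agree: naturality of pullback along $X\to\mathrm{Spec}\,k$ together with $CH^i(\mathrm{Spec}\,k)\ot\QQ = 0$ forces any operation $K \to CH^i\ot\QQ$ to annihilate the constant summand $K(k)\subset K(X)$, and combined with the CONST splitting $K = \tilde K \oplus K(k)$ one obtains $[K, CH^i\ot\QQ]\cong[\tilde K, CH^i\ot\QQ]$ by pre-composition with the projection $K\to\tilde K$. I expect this $K$-versus-$\tilde K$ reconciliation to be the only step needing genuine care---verifying it at the level of non-additive operations uses continuity together with the Discrete Taylor Expansion of Prop \ref{prop:taylor} applied to the decomposition of an arbitrary element into its constant and $\tilde K$ parts; the rest of the proof is a direct dimension count.
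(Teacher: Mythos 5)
Your first paragraph is the counting argument the paper intends: Proposition~\ref{ch-base} identifies poly-operations to $CH^*\ot\QQ$ with $\QQ[[ch_1,ch_2,\ldots]]^{\odot r}$, and the $CH^i$-component of the $r=1$ case is spanned by the monomials $\prod_j ch_j^{a_j}$ with $\sum_j j\,a_j = i$, in bijection with partitions of $i$; likewise the $r$-ary count $\sum_{d_1+\cdots+d_r=i}\prod_k p(d_k)$ depends only on $i$, $r$ and the partition function. That part is correct and is all the paper has in mind.

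Your second paragraph, however, has a genuine gap that cannot be repaired as you propose. For a non-additive operation, ``$\phi$ vanishes on the constant summand'' does \emph{not} imply ``$\phi$ factors through the projection onto $\tilde{K}$.'' The Discrete Taylor Expansion you invoke exhibits exactly the obstruction rather than removing it: for a constant $a$ and $\tilde x\in\tilde{K}$,
$$\phi(a+\tilde x) = \phi(a) + \phi(\tilde x) + \partial\phi(a,\tilde x),$$
and the cross-term $\partial\phi(a,\tilde x)$ need not vanish even when $\phi(a)=0$. Concretely, on $K_0$ the operations $\mathrm{rk}^j\cdot c_1: K_0\rarr CH^1\ot\QQ$, $j\ge 1$, all vanish on constants and all restrict to zero on $\tilde{K}_0$, yet they take the value $n^j\,c_1(\tilde x)$ on $n\cdot 1 + \tilde x$ and are therefore nonzero and $\QQ$-linearly independent. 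Thus $[K_0, CH^1\ot\QQ]$ is infinite-dimensional and the restriction map to $[\tilde{K}_0, CH^1\ot\QQ]$ has infinite-dimensional kernel. The corollary is only true with $\tilde{K}$ in place of $K$ (consistent with Proposition~\ref{ch-base} and with its later use in Proposition~\ref{add_rank}, which is again stated for $\tilde{A}^*$); once read that way, no $K$-versus-$\tilde{K}$ reconciliation is needed and your first paragraph is the whole proof.
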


\subsection{Applications: theories of rational type with the ring of coefficients equal to $\Zp$.}

\begin{Prop}\label{add_rank}
Let $A^*$ be a theory of rational type s.t. $A=\Zp$ and let $i\ge 1$.

Then the $\Zp$-module of additive operations $[\tilde{A}^*, CH^i\ot \Zp]^{add}$ is free of rank 1;
the $\Zp$-module of all operations $[\tilde{A}^*, CH^i\ot \Zp]$ is free of rank $p(i)$,
	where $p(i)$ is the number of partitions of $i$.
\end{Prop}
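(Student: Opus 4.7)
The plan is to localize to $\QQ$-coefficients and combine the rational classification with an integrality argument via the COT. First, by Prop \ref{prolong} (and its evident additive variant) the natural map $[\tilde A^* \ot \QQ, CH^i \ot \QQ] \to [\tilde A^*, CH^i \ot \QQ]$ is an isomorphism, and by Prop \ref{ch-base} together with Cor \ref{cor-dim_op} its source is the degree-$i$ part of $\QQ[[ch_1, ch_2, \ldots]]$, a $\QQ$-vector space of dimension $p(i)$ whose additive part $\QQ \cdot ch_i$ has dimension $1$. Second, the coefficient map $\Zp \hookrightarrow \QQ$ induces an injection
$$M := [\tilde A^*, CH^i \ot \Zp] \hookrightarrow [\tilde A^*, CH^i \ot \QQ] = \QQ^{p(i)},$$
because by the COT and continuity (Section \ref{sec:cont}) an operation is determined by its restriction to products of $\mathbb{P}^\infty$, and the target $\Zp[[z_1,\ldots,z_l]]_{(i)}$ is a torsion-free $\Zp$-module; the same argument works for additive operations, giving $M_{add} := [\tilde A^*, CH^i \ot \Zp]^{add} \hookrightarrow \QQ$. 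Both $M$ and $M_{add}$ are thus torsion-free $\Zp$-modules of rank at most $p(i)$ and $1$, respectively.

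Next I would establish finite generation. For $M_{add}$ this is immediate from the CAOT combined with Remark \ref{rem:add_grad}: the classifying data is a sequence of symmetric polynomials $G_l(1) \in \Zp[z_1,\ldots,z_l]_{(i)}$ divisible by $z_1 \cdots z_l$, which vanish whenever $l > i$ on degree grounds, and the $\Zp$-linear CAOT equations cut $M_{add}$ out of a finite free $\Zp$-module. For $M$ I would use discrete Taylor (Prop \ref{prop:taylor}) together with the observation that any rational degree-$i$ operation from the previous step is a polynomial of ``length'' at most $i$ in the Chern characters, hence $\partial^i \phi = 0$ rationally and therefore, by the same torsion-freeness argument applied to the poly-operation $\partial^i \phi$, also integrally. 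The filtration $F^k := \{\phi \in M : \partial^{k+1}\phi = 0\}$ satisfies $F^{i-1} = M$, and each graded piece $F^k / F^{k-1}$ embeds into the $\Zp$-module of $(k+1)$-ary poly-additive operations to $CH^i \ot \Zp$, which a poly-operation analogue of the CAOT classifies by finitely many symmetric polynomials in at most $i$ variables with $\Zp$-coefficients; inductively, $M$ is finitely generated.

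Finally, to pin down the rank I would show $M \ot_\Zp \QQ \cong \QQ^{p(i)}$: using the linearity of the CAOT equations (and of their poly-operation analogue underlying the filtration above), the system defining $M$ is $\ZZ$-linear in the classifying data, so the standard fact $\ker_\QQ = \ker_\Zp \ot \QQ$ for $\ZZ$-linear systems shows that every $\tilde\phi \in \QQ^{p(i)}$ admits some $N \in \ZZ$ with $N\tilde\phi \in M$. Combined with torsion-freeness and finite generation, this yields $M \cong \Zp^{p(i)}$ and $M_{add} \cong \Zp$. The main obstacle is the non-additive case: since the COT classifies non-additive operations only as maps of sets rather than as modules, reducing both the finite generation and the $\QQ$-scaling step to a $\ZZ$-linear system of CAOT-type equations in finitely much $\Zp$-data requires carefully exploiting the discrete Taylor expansion, the vanishing $\partial^i \phi = 0$, and the poly-operation version of the CAOT.
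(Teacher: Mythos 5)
Your opening moves match the paper's: embed $[\tilde A^*,CH^i\ot\Zp]$ and its additive part into the rational spaces (torsion-freeness via the COT), identify the rational space as $\QQ[[ch_1,ch_2,\ldots]]_{(i)}$ of dimension $p(i)$ by Prop.~\ref{ch-base} and Cor.~\ref{cor-dim_op}, and reduce to showing the natural map $M\ot\QQ\to[\tilde A^*,CH^i\ot\QQ]$ is surjective. The additive case is fine: the CAOT realizes $[\tilde A^*,CH^i\ot\Zp]^{add}$ as the kernel of a $\Zp$-linear system in finitely many unknowns, so rank $1$ over $\Zp$ follows from the rational dimension count. Your observation that $\partial^i\phi=0$ for operations landing in $CH^i$ is also correct.

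The gap you flag at the end is, however, the genuine crux of the non-additive case, and the filtration $F^k=\{\phi:\partial^{k+1}\phi=0\}$ does not close it. While each graded piece $F^k/F^{k-1}$ embeds $\Zp$-linearly into the finite lattice of $(k+1)$-ary poly-additive poly-operations, this only controls the top derivative; it does not show that a rational $\psi$ with $p^{N}\partial^k\psi$ integral admits an integer multiple whose entire tower of lower derivatives is simultaneously integral, since there is no canonical integral ``antiderivative.'' The statement ``the system defining $M$ is $\ZZ$-linear in the classifying data'' is precisely what fails for non-additive operations: the COT gives you maps of sets, and linearity appears only on associated graded pieces, where it is insufficient. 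The missing ingredient, which the paper supplies, is a direct \emph{finite-determination} argument: by Discrete Taylor (Prop.~\ref{prop:taylor}) the value of $\psi$ on any $\ZZ$-linear combination of monomials is a $\ZZ$-linear combination of the finitely many values $\partial^n\psi(\bar z^{(1)},\ldots,\bar z^{(n-1)})$ with $n\le i$, $r_k\le i$; to pass from $\NN$- to $\Zp$-coefficients one invokes the Adams operations as in Prop.~\ref{prolong} (this step is not in your proposal at all and is essential). Once $\psi$ is seen to be $\Zp$-linearly determined by a finite list of rational polynomials, one clears their denominators by a single power $p^N$, yielding $p^N\psi$ integral on all of $\Zp[[z_1,\ldots,z_l]]$, hence an integral operation by the COT. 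This also closes the finite-generation issue in one stroke, so the filtration is an unnecessary detour: the paper goes straight to the finite-determination plus scaling argument.
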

\begin{proof}
The $\Zp$-modules $[\tilde{A}^*, CH^i\ot \Zp]^{add}$ and $[\tilde{A}^*, CH^i\ot \Zp]$
are torsion-free and are submodules of operations to $CH^i\ot\QQ$
as follows from the COT.
The spaces of operations to $CH^i\ot\QQ$ are finite dimensional, 
thus modules of operations to $CH^i\ot\Zp$ are of finite rank and free.

We claim that the natural map $[A^*,CH^*\ot\Zp]\ot\QQ \rarr [A^*,CH^*\ot\QQ]$ is an isomorphism.
This would prove the proposition as ranks of $\QQ$-modules of operations 
were computed in Cor. \ref{cor-dim_op}.
One may reformulate the claim as follows: for any operation $\psi:A^*\rarr CH^i\ot\QQ$ 
there exists $n$ s.t. $p^n\psi$ factors through $CH^i\ot\Zp$.

By the COT operation $\psi$
is specified by its value on products of projective space,
i.e. on $A^*((\mathbb{P}^{\infty})^{\times r})=\Zp[[z_1,\ldots, z_r]]$
where $z_j = c_1^A(\mathcal{O}(1)_j)$.
 Denote also by $t_j$'s the first Chern classes $c_1^{CH}(\mathcal{O}(1)_j)$ 
 in the target theory.

First, let us show that $\partial^{i+1} \psi = 0$.
From the definition of derivatives it follows that 
$\partial^s \psi(P_1,\ldots,P_{s+1})=0$ whenever $P_i=0$ for some $i$.
It is enough to show that $\partial^{i+n}\psi(P_1,\ldots,P_{i+n+1})=0$ for all $n\ge 1$
and all {\it monomials} $P_j=az_1\cdots z_s$ since values on any tuple of polynomials
can be expressed by higher derivatives of values on such monomials 
using  Discrete Taylor Expansion (Prop. \ref{prop:taylor}).
 Denote by $a_k\bar{z}^{(k)}=a_kz^{(k)}_1\cdot z^{(k)}_2\cdots z^{(k)}_{r_k}$
for some $r_k\ge 1$ and $a_k\in\Zp$.
The value $\partial^{i+n}\psi(a_1\bar{z}^{(1)},a_2\bar{z}^{(2)},\ldots,a_{i+n+1}\bar{z}^{(i+n+1)})$
has to be divisible by $\prod_{k=1}^{i+n+1} \prod_{j=1}^{r_k} t^{(k)}_{j}$ which has degree 
bigger than $i$ whenever $n>0$,
and thus is zero.

Second, now it is easy to see that the operation 
$\psi$ is specified by a finite number of values
$\partial^n \psi(\bar{z}^{(1)},\bar{z}^{(2)},\ldots$\\$\ldots,\bar{z}^{(n-1)})$
for all $r_k\le i$ and $n\le i$. 
Indeed, values $\partial^n \psi(a_1\bar{z}^{(1)},a_2\bar{z}^{(2)},\ldots,a_{n-1}\bar{z}^{(n-1)})$
for $a_i\in \NN$ can be expressed in terms of values above
by the definition of derivatives, and for $a_i\in\Zp$
one needs to use invertible Adams operations in the same way as in
the proof of Prop. \ref{prolong}.
Therefore there exist $N$ 
s.t. for the operation $p^N\psi$ all these polynomials will be $p$-integral 
and therefore the operation will be $p$-integral as well.
\end{proof}

\section{Operations from Morava K-theories to Chow groups}\label{sec_op_mor}

Fix a prime $p$. All the theories considered in this section will have
a structure of $\Zp$-algebras 
and the adjective {\sl integral} will always mean {\sl defined over $\Zp$} (aka $p$-integral).

\subsection{Definitions and grading}\label{subsec_mor}

\begin{Def}\label{mor} Let $n\ge 1$.
A theory of rational type with the ring of coefficients $\Zp$
is called $n$-th Morava K-theory 
if the logarithm of the corresponding formal group law $F_{K(n)}$
has the form $\sum_{i=0}^\infty b_i x^{p^{ni}}$ (where $b_i\in\QQ$),
and $F_{K(n)}\mod p$ has height $n$.
\end{Def}

\begin{Rk}
In \cite{PS_PetSem} {\sl the} $n$-th Morava K-theory
was defined as a theory 
of rational type with
the Lubin-Tate formal group law
defined by the logarithm $\sum_{i=0}^\infty \frac{1}{p^i} x^{p^{ni}}$.
This gives an example of an $n$-th Morava K-theory of Def. \ref{mor}.

However, one can show that
not all of $n$-th Morava K-theories as we define them
are multiplicatively isomorphic to each other 
(in particular, not all of them are multiplicatively isomorphic
to the one in loc.cit.).
In a future paper we will show 
that all $n$-th Morava K-theories
are isomorphic as presheaves of abelian groups, thus,
 after all there is the $n$-th Morava K-theory
 which might be endowed with different multiplicative and push-forward structures.
Perhaps, this justifies our 'ambiguous' definition.
\end{Rk}

\begin{Rk}
In topology one often defines the spectrum of the $n$-th Morava K-theory
as $(BP/(p, v_j, j\neq n))[v_n^{-1}]$,
and so its ring of coefficients is $\mathbb{F}_p[v_n, v_n^{-1}]$
where $\deg v_n=1-p^n$. The resulting spectrum does not depend on the choice of generators $v_j$.

 If, however, one chooses to consider $(BP/(v_j, j\neq n), v_n-a_1)$, where $a_1\in\Zp^\times$, 
similar to what we investigate, it is not clear whether the result is independent
of the choice of $v_j$. 
\end{Rk}

\begin{Rk}\label{rem_k0-mor}
The Artin-Hasse exponential $\exp(\log_{K(1)})$ establishes an isomorphism over $\Zp$
between the formal group law $F_m=x+y+xy$ and 
the formal group law with the logarithm $\log_{K(1)}(x)=\sum_{i=0}^\infty \frac{x^{p^i}}{p^i}$.

For corresponding g.o.c.t. this means
that K-theory $K_0\ot\Zp$ is isomorphic to a first
Morava K-theory as a presheaf of rings.
\end{Rk}

The formal group law of $K(n)^*$ looks as: 
$F_{K(n)}(x,y)=x+y-b_1\sum_{i=1}^{p^n-1} \binom{p^n}{i} x^iy^{p^n-i}+\textrm{higher terms}$,
where $a_1\in\Zp^{\times}$.

\begin{Prop}\label{prop:mor_grad}
\begin{enumerate}
\item The formal group law $F_{K(n)}=\sum_{i,j} a_{ij}x^iy^j$ of the theory $K(n)^*$
satisfies: $a_{ij}=0$ for $i+j\neq 1 \mod (p^n-1)$.

\item The Morava K-theory $K(n)^*$ is $\ZZ/(p^n-1)$-graded.

\item The grading on $K(n)^*$ is respected by Adams operations.

\item\label{item:pushforward} The grading is compatible with push-forwards,
i.e. for a proper morphism $f:X\rarr Y$ of codimension $c$
push-forward maps increase grading by $c$,
 $f_*:K(n)^i(X)\rarr K(n)^{i+c}(Y)$.
 
In particular, $c_1^{K(n)}(L)\in K(n)^1(X)$ for any line bundle $L$ over a smooth variety $X$.

\item Let $A$ be a $\Zp$-algebra containing the root
 of unity $\zeta$ of degree $(p^n-1)$. 
Then there exist a multiplicative operation $\hat{\zeta}$,
 which sends $c_1(L)$ to $\zeta c_1(L)$
for any line bundle $L$.
This operation acts as multiplication by $\zeta^j$ on $K(n)^j\ot A$.
\end{enumerate}
\end{Prop}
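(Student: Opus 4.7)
The five parts form a chain in which (1) is the only real input and (2)--(5) are formal consequences. My plan is as follows.

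For (1), my approach is to propagate the exponent condition from the logarithm to the formal group law in two steps. The hypothesis $\ell(x) = \sum_i b_i x^{p^{ni}}$ says that every exponent appearing in $\ell$ satisfies $p^{ni} \equiv 1 \pmod{p^n-1}$. The first step is to prove the analogous property for the compositional inverse $\ell^{-1}$: working over an extension of \QQ containing a primitive $(p^n-1)$-th root of unity $\zeta$, the identity $\ell(\zeta x) = \zeta \ell(x)$ is immediate from the form of $\ell$, and comparing $\ell(\zeta \ell^{-1}(z)) = \zeta z = \ell(\ell^{-1}(\zeta z))$ together with invertibility of composition with $\ell$ gives $\zeta \ell^{-1}(z) = \ell^{-1}(\zeta z)$, so every coefficient of $\ell^{-1}$ in degree $k \not\equiv 1 \pmod{p^n-1}$ must vanish. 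The second step is to substitute into $F(x,y) = \ell^{-1}(\ell(x)+\ell(y))$: every monomial $x^i y^j$ that appears comes from expanding some $(\ell(x)+\ell(y))^k$ with $k \equiv 1$, so $i + j \equiv k \equiv 1 \pmod{p^n-1}$.

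For (2), my plan is to transport the standard \ZZ-grading of $\Omega^*$ via the tensor-product description $K(n)^* = \Omega^* \ot_\LL \Zp$. Grading \LL so that $a_{ij}$ sits in degree $1-i-j$, part (1) says precisely that the kernel of the classifying morphism $\LL \rarr \Zp$ is generated by elements of degree divisible by $p^n-1$. Placing \Zp in degree $0$ of a $\ZZ/(p^n-1)$-grading thus promotes the classifying morphism to a graded one, and the presheaf $K(n)^*$ inherits a $\ZZ/(p^n-1)$-grading which is preserved by pullbacks since the \ZZ-grading on $\Omega^*$ is.

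Parts (3), (4) and the Chern-class claim of (5) then follow formally. Push-forwards on $\Omega^*$ raise the \ZZ-grading by the codimension, so the same holds after tensoring with \Zp, and in particular $c_1^{K(n)}(L) = s^*s_*(1_X)$ lies in $K(n)^1$. For Adams operations, by the COT (Theorem~\ref{Vish_op}) it suffices to test degree-preservation on products of projective spaces, where $\psi_k$ acts by $z_i \mapsto [k]_{F_{K(n)}}(z_i)$; iterating $F$ and invoking (1) shows that $[k]_F$ is supported on degrees $\equiv 1 \pmod{p^n-1}$, so $\psi_k$ is graded.

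Finally, for (5), once the grading is in place I would simply define $\hat\zeta$ to act on each homogeneous summand $K(n)^j \ot A$ by multiplication by $\zeta^j$. Multiplicativity is automatic from compatibility of the grading with products; $\hat\zeta$ commutes with pullbacks (hence defines an operation of presheaves of rings) because pullbacks are graded by (2); and $\hat\zeta(c_1^{K(n)}(L)) = \zeta c_1^{K(n)}(L)$ is immediate since $c_1^{K(n)}(L) \in K(n)^1$ by (4). Thus the entire proposition rests on the single technical step (1), propagating the exponent constraint from $\ell$ to its inverse --- this is where I expect the only genuine difficulty to lie; all later parts are formal reflections of the \ZZ-graded structure of algebraic cobordism.
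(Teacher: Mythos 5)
Your proposal is correct in its essentials but takes a somewhat different route from the paper, and one intermediate claim is misstated.

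The paper proves (1), (2), (4) in one stroke by a homogenization trick: it introduces the auxiliary $\ZZ$-graded theory $\hat{K}(n)^*$ over $\Zp[v_n,v_n^{-1}]$ (with $\deg v_n = 1-p^n$) whose logarithm $\sum b_i v_n^{(p^{ni}-1)/(p^n-1)} x^{p^{ni}}$ is homogeneous of degree $1$, so the classifying map $\LL\rarr\Zp[v_n,v_n^{-1}]$ is $\ZZ$-graded and $\hat{K}(n)^*$ inherits the $\ZZ$-grading of $\Omega^*$; localizing at $v_n=1$ then collapses the $\ZZ$-grading to $\ZZ/(p^n-1)$. You instead establish (1) directly by a root-of-unity scaling argument on $\ell$, $\ell^{-1}$ and $F$, and then argue separately in (2) that the classifying map $\LL\rarr\Zp$ descends to the $\ZZ/(p^n-1)$-grading. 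The two ideas are mathematically equivalent (putting $v_n$ in weight $1-p^n$ and scaling $x\mapsto\zeta x$ both enforce the same weight constraint), but the paper's packaging gets several parts at once; yours is more hands-on and transparent for (1). For (3) the paper simply observes that the grading projectors commute with Adams operations by Prop.~\ref{adams}, which is more direct than your computation of $\psi_k$ on projective spaces, though yours also works. For (5) the paper invokes the bijection between multiplicative operations and morphisms of formal group laws and checks $\gamma(z)=\zeta z$ is an endomorphism of $F_{K(n)}$; your approach of defining $\hat\zeta$ degreewise from the grading is an equally valid, slightly more elementary construction.

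One point to fix: in step (2) you claim that part (1) ``says precisely that the kernel of the classifying morphism $\LL\rarr\Zp$ is generated by elements of degree divisible by $p^n-1$.'' That is not the statement you need, and as written it does not imply that the morphism becomes $\ZZ/(p^n-1)$-graded with $\Zp$ in degree $0$. What you actually need, and what (1) does give, is that \emph{every homogeneous element of $\LL$ in degree $\not\equiv 0 \pmod{p^n-1}$ lies in the kernel}: any monomial in the $a_{ij}$'s of such degree must contain a factor $a_{ij}$ with $i+j\not\equiv 1 \pmod{p^n-1}$, which maps to $0$ by (1). With this corrected, the rest of your argument goes through.
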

\begin{proof}
Let $\sum_{i=0}^\infty b_i x^{p^{ni}}$ be the logarithm of $F_{K(n)}$
and consider the FGL over the graded ring $\Zp[v_n,v^{-1}_n]$ (with $\deg v_n=1-p^n$)
 defined by its logarithm
$\sum_{i=0}^\infty b_iv_n^{\frac{p^{in}-1}{p^n-1}} x^{p^{in}}$.
Denote by $\hat{K}(n)^*$ the corresponding theory of rational type.

Since the logarithm is homogenous of degree 1 (with $x$ having degree 1),
the map $\mathbb{L}\rarr \Zp[v_n,v_n^{-1}]$
classifying the FGL preserves grading and the theory $\hat{K}(n)^*$
inherits the grading from algebraic cobordism. 
Moreover,
 $F_{\hat{K}(n)}=\sum_{i,j} \hat{a}_{ij}x^iy^j$ 
 has $\hat{a}_{ij}=0$ for $i+j\neq 1 \mod (p^n-1)$
for grading reasons.

The FGL $F_{K(n)}$ can be defined by setting $v_n=1$ in $F_{\hat{K}(n)}$
and the theory $K(n)^*$ is the localisation of $\hat{K}(n)^*$ along $v_n=1$.
Properties (1), (2), (4) now follow.

Note that the grading determines projectors $K(n)^*\rarr K(n)^i$
which have to commute with Adams operations by Prop. \ref{adams}. This proves property (3).

To prove (5) recall that multiplicative operations are in 1-to-1 correspondence
 with morphisms of FGL's (e.g. \cite[Th. 6.8]{PS_Vish1}). 
 Thus, we need to check that $\gamma(z)=\zeta z$ defines an endomorphism of the FGL $F_{K(n)}$.
This is clear from the property (1).
\end{proof}

We denote the graded components of $n$-th Morava K-theories
 as $K(n)^1$, $K(n)^2$, $\ldots$, $K(n)^{p^n-1}$,
and freely use the following expressions
 $K(n)^i$, $K(n)^{i \mod{p^n-1}}$, $K(n)^{i+r(p^n-1)}$
to denote the component $K(n)^j$ where $j\equiv i \mod p^n-1$, $1\le j\le p^n-1$.
The reason we denote the component $K(n)^{0}$ as $K(n)^{p^n-1}$
is mainly because we will work with $\tilde{K}(n)^*$
instead of $K(n)^*$, $\tilde{K}(n)^{p^n-1}$ 
contains classes of codimensions $p^n-1 + r(p^n-1)$ for all $r\ge 0$.

For Morava K-theories as well as for any free theory defined over the subring of $\QQ$,
 there exist a unique stable multiplicative operation $ch:K(n)^*\rarr CH^*\ot\QQ$
which we call the Chern character. 

\begin{Prop}\label{add_grad}
Let $\phi:K(n)^*\rarr CH^i\ot\Zp$ be any additive operation, $i\ge 1$.

Then it is supported on $\tilde{K}(n)^{i\mod p^n-1}$,
i.e. $\phi(x)=0$ for $x\in \Zp$ and for $x\in \tilde{K}(n)^j$ whenever $j\neq i\mod p^n-1$.
\end{Prop}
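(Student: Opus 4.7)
My strategy is to apply the CAOT (additive version) directly and exploit the grading property of $F_{K(n)}$ from Proposition \ref{prop:mor_grad}(1) to decouple the resulting system of linear equations modulo $p^n-1$.

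First I would translate $\phi$ into CAOT data (with Remark \ref{rem:add_grad} for the graded refinement): $\phi$ is encoded by $\mathbb{Z}$-additive maps $G_l\colon \Zp\to (CH^i\otimes\Zp)[[t_1,\dots,t_l]]$ for $l\ge 0$, where $t_j=c_1^{CH}(\mathcal{O}(1)_j)$. Each $G_l(\alpha)$ is symmetric and divisible by $t_1\cdots t_l$; crucially, since the target is the single homogeneous component $CH^i$ and every $t_j$ has $CH$-degree $1$, $G_l(\alpha)$ must be a homogeneous polynomial of total degree exactly $i$. In particular $G_l=0$ for $l>i$, and $G_0(\alpha)\in CH^i(\mathrm{Spec}\,k)\otimes\Zp=0$ for $i\ge 1$, which already disposes of the case $x\in\Zp$.

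The remaining content lives in the CAOT compatibility relation
\[
G_l(\alpha)(t_1,\dots,t_{l-1},x+y)=\sum_{i',j'}G_{l-1+i'+j'}(\alpha a_{i'j'})(t_1,\dots,t_{l-1},x^{\times i'},y^{\times j'}),
\]
where $a_{i'j'}$ are the coefficients of $F_{K(n)}$ (and the target FGL is $F_a(x,y)=x+y$). By Proposition \ref{prop:mor_grad}(1), $a_{i'j'}=0$ unless $i'+j'\equiv 1\pmod{p^n-1}$, so every index $l-1+i'+j'$ appearing on the RHS is $\equiv l\pmod{p^n-1}$. Thus the family $\{G_l\}$ decouples into $p^n-1$ independent subfamilies indexed by residue class $l\bmod(p^n-1)$, and it suffices to show that every $G_l$ with $l\equiv j\not\equiv i\pmod{p^n-1}$ vanishes.

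Fix such a class $j$ and proceed by downward induction on $l\equiv j\pmod{p^n-1}$; the induction is finite since $G_l=0$ for $l>i$. At the largest remaining $l$, every term with $i'+j'\ge p^n$ has index $l-1+i'+j'\ge l+p^n-1>i$ (by maximality of $l$), so those terms vanish by degree; in the inductive step they vanish by the induction hypothesis. Only the two "trivial" terms $(i',j')\in\{(1,0),(0,1)\}$, both with $a_{i'j'}=1$, survive, so the relation collapses to
\[
G_l(\alpha)(t_1,\dots,t_{l-1},x+y)=G_l(\alpha)(t_1,\dots,t_{l-1},x)+G_l(\alpha)(t_1,\dots,t_{l-1},y).
\]
So $G_l(\alpha)$ is additive in its last variable, and by symmetry additive in each variable; over the torsion-free ring $\Zp$ this forces degree $\le 1$ in each $t_j$, hence total degree $\le l$. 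But $G_l(\alpha)$ is homogeneous of degree $i$ and divisible by $t_1\cdots t_l$, which would force $l=i$, contradicting $l\equiv j\not\equiv i\pmod{p^n-1}$. Therefore $G_l=0$.

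The main obstacle is not conceptual but the bookkeeping: I must confirm at each inductive stage that the only surviving contributions in the CAOT relation are the $(1,0)$ and $(0,1)$ terms, which relies on the degree bound $l+(p^n-1)>i$ at the start of the induction combined with the inductive hypothesis. Once that is set up, the conclusion is immediate from the mod-$(p^n-1)$ grading of the FGL.
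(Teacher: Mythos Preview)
Your argument is correct and takes a genuinely different route from the paper's proof. The paper proceeds by tensoring with $\QQ$ and invoking Proposition~\ref{add_rank} to see that $\phi\otimes\id_\QQ$ is a rational multiple of the Chern character component $ch_i$; the claim then reduces to a direct computation showing that $ch(z)=\log_{K(n)}^{-1}(t)$ only has terms of degree $\equiv 1\pmod{p^n-1}$, which follows from the shape of the logarithm. Your approach instead works entirely inside the CAOT system, using Proposition~\ref{prop:mor_grad}(1) to decouple the equations by residue class and then running a finite downward induction to kill each $G_l$ in the wrong class.

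What each approach buys: the paper's argument is shorter once Proposition~\ref{add_rank} is available and reduces everything to a single explicit operation, but it relies on that rank computation and on passing to $\QQ$. Your argument is more self-contained and elementary---it uses neither the rank-one result nor the Chern character, and it never leaves $\Zp$-coefficients. One small point you might make explicit at the end: having shown $G_l=0$ for all $l\equiv j$, you should remark that on products of projective spaces $\tilde K(n)^j$ is (topologically) generated by monomials $z_1^{a_1}\cdots z_r^{a_r}$ with $\sum a_s\equiv j$, whose $\phi$-values are diagonal pullbacks of the vanishing $G_{\sum a_s}$; then invoke the COT to conclude $\phi|_{\tilde K(n)^j}=0$ globally. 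This is implicit in your setup but worth a sentence.
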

\begin{proof}
By the COT it suffices to check the statement for values of theories 
on products of projective spaces tensored by $\QQ$.
By Prop. \ref{add_rank} the operation $\phi\ot \id_\QQ$ is proportional
to $ch_i$ and therefore it suffices to prove that $ch_i$ maps $K(n)^j$ to zero
whenever $j\neq i \mod (p^n-1)$. Clearly $ch_i$ is zero on $\Zp\subset K(n)^*$.

From the projective bundle theorem
it follows that $K(n)^*((\mathbb{P}^\infty)^{\times r})\cong \Zp[[z_1,\ldots, z_r]]$,
{where} $z_i$ is equal to $c_1^{K(n)}(\mathcal{O}(1)_i)$. 
By Prop. \ref{prop:mor_grad}, (\ref{item:pushforward}) we have $z_i\in K(n)^1((\mathbb{P}^\infty)^{\times r})$,
and thus $\tilde{K}(n)^j((\mathbb{P}^\infty)^{\times r})$ is generated by monomials 
of degree $j+r(p^n-1)$, $r\ge 0$.

Let us show that 
\begin{equation}\label{eq:ch_grad}
ch(z)\in \oplus_{r=0}^\infty CH^{1+r(p^n-1)}(\mathbb{P}^\infty)\ot\QQ
\end{equation}

\sloppy{From the multiplicativity of the Chern character it would follow that} 
$ch(z_1\cdots z_i)$ lies in 
$\oplus_{r=0}^\infty CH^{i+r(p^n-1)}((\mathbb{P}^\infty)^{\times i})\ot\QQ$
 for any $i\ge 1$.

Note that the element $ch(z)$ can be expressed as a series in 
$t:=c_1^{CH}(\mathcal{O}(1))\in CH^1(\mathbb{P}^\infty)$
 which is inverse to the logarithm of the FGL $F_{K(n)}$.
As $\log_{K(n)}(x)$ contains only monomials in $x$ of degrees $1+r(p^n-1)$, $r\ge 0$,
the same is true for its inverse. The formula (\ref{eq:ch_grad})
 now follows and the proposition is proved.
\end{proof}

\subsection{Chern classes: statement of the main theorem}\label{subsec_mainth}

\begin{Th}\label{main}

There exist a series of non-additive operations $c_i:K(n)^*\rarr CH^i\ot\Zp$ for $i\ge 1$
satisfying the following conditions. 
\begin{enumerate}[i)]
\item The operation $c_i$ is supported on $K(n)^{i\mod (p^n-1)}$,
 i.e. $c_i(x)=0$ for $x\in K(n)^j$, where $j\neq i \mod p^n-1$.
\item Denote by $c_{tot} = \sum_{i\ge 1} c_i$ the total Chern class. Then the Cartan's formula holds: 
$$c_{tot}(x+y)=F^{K(n)}(c_{tot}(x),c_{tot}(y)).$$
\item Any operation $\tilde{K}(n)^*\rarr CH^*\ot\Zp$ can be 
written uniquely as a series in $\{c_j\}_{j\ge 1}$ over $\Zp$:
$$ [\tilde{K}(n)^*, CH^*\ot\Zp] = \Zp [[c_1,\ldots, c_i,\ldots ]].$$
\end{enumerate}
\end{Th}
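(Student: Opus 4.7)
The plan is to establish all three parts simultaneously by inductively constructing the $c_i$'s via the COT, with Cartan's formula functioning as the rigidifying functional equation, and then deducing freeness (iii) by adapting the argument of Theorem~\ref{basis}. Throughout I would work with restrictions to products of projective spaces, where everything reduces to compatible families of formal power series over $\Zp$.

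First I would analyze what (i) and (ii) demand. By the COT, specifying $c_i:K(n)^*\to CH^i\otimes\Zp$ is the same as specifying a compatible family of maps $\Zp[[z_1,\ldots,z_l]]\to(CH^*\otimes\Zp)[[t_1,\ldots,t_l]]$ respecting the pullbacks listed in the COT. Writing $F_{K(n)}(x,y)=x+y+\sum_{j+k\ge 2}a_{jk}x^jy^k$ and extracting the degree-$i$ component in $CH^*$ of the Cartan identity gives
\[ \partial c_i(x,y) \;=\; c_i(x+y)-c_i(x)-c_i(y) \;=\; \sum_{j+k\ge 2} a_{jk}\,\bigl[c_{tot}(x)^{j}\,c_{tot}(y)^{k}\bigr]_{i}. \]
Because each monomial on the right has $\ge 2$ factors each of positive $CH^*$-degree summing to $i$, only $c_l$'s with $l<i$ appear; thus $\partial c_i$ is a universal $\Zp$-polynomial in $c_1,\ldots,c_{i-1}$, and by iterating (using associativity of $F_{K(n)}$) all higher derivatives $\partial^{s}c_i$ with $s\ge 1$ are prescribed inductively.

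The core step is then: given $c_1,\ldots,c_{i-1}$ and the prescribed, $\Zp$-integral higher derivatives $\{\partial^{s}c_i\}_{s\ge 1}$ (which automatically satisfy the symmetry and associativity compatibilities forced by the FGL axioms), produce an operation $c_i:\tilde{K}(n)^*\to CH^i\otimes\Zp$ realizing them. Rationally this is straightforward by Prop.~\ref{ch-base}, since every operation to $CH^*\otimes\QQ$ is a series in the Chern character components $ch_j$. The main obstacle is $p$-integrality: two integral lifts differ by an additive operation $\tilde{K}(n)^*\to CH^i\otimes\Zp$ — a free $\Zp$-module of rank $1$ concentrated in the correct graded degree by Prop.~\ref{add_rank} combined with Prop.~\ref{add_grad} — so the obstruction to integrality is governed by the cokernel of reduction mod $p$ on additive operations. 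Here I would invoke the mod-$p$ analysis of Section~\ref{op_modp} to identify a sufficient liftability criterion and check that the inductively prescribed derivatives satisfy it. The grading condition (i) then follows from Prop.~\ref{prop:mor_grad} and Prop.~\ref{add_grad}.

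For freeness (iii), I would adapt the proof of Theorem~\ref{basis}. Given $\phi:\tilde{K}(n)^*\to CH^*\otimes\Zp$, form the symmetric series $Q:=\phi(z_1+z_2+\cdots)$ where $+$ is computed via $F_{K(n)}$. By the fundamental theorem of symmetric functions there is a unique $P\in\Zp[[\sigma_1,\sigma_2,\ldots]]$ with $P(\sigma)=Q(t)$, where $\sigma_j$ denotes the $j$-th elementary symmetric function in $t_1,t_2,\ldots$. Because the freshly-constructed $c_j$'s agree with $\sigma_j$ to leading order on $z_1+z_2+\cdots$ (their $F_{K(n)}$-Cartan structure only modifies strictly higher-degree terms), the triangular change of variables $\sigma_j\leftrightarrow c_j(z_1+\cdots)$ can be inverted inductively, rewriting $P$ uniquely as a power series in the $c_j$'s that agrees with $\phi$ on $z_1+z_2+\cdots$. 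To finish, Step~2 of the proof of Theorem~\ref{basis} — that an operation on $\tilde{K}(n)^*$ vanishing on $z_1+z_2+\cdots$ is zero — transfers verbatim, since its argument uses only continuity of operations and the Discrete Taylor Expansion (Prop.~\ref{prop:taylor}), neither of which depends on the underlying FGL. The main obstacle throughout remains the $p$-integral lifting in the construction step: rationally everything is essentially forced up to additive ambiguity, but denominators in $ch$ mean a naive rational solution is rarely integral, and resolving this requires the delicate mod-$p$ analysis of additive operations that forms the technical heart of the paper.
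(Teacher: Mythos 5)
Your outline for parts (i)--(ii) matches the paper's inductive strategy: Cartan's formula fixes $\partial c_i$ as an integral polynomial in lower Chern classes, one integrates rationally, and the remaining ambiguity is an additive operation whose good choice must be found. But you explicitly defer the actual obstruction argument to ``invoking the mod-$p$ analysis of Section~\ref{op_modp}''; that analysis (Lemma~\ref{chern_exist}, Lemma~\ref{lift_add}, and the one-dimensionality of gradable additive operations mod~$p$) is the technical heart, so as a proof this part is an honest sketch with the hard step left open.

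The approach to (iii) has a genuine gap. You form $Q := \phi(z_1+z_2+\cdots)$, expand it by the fundamental theorem of symmetric functions as $P(\sigma_1,\sigma_2,\ldots)$, and propose to invert a ``triangular change of variables $\sigma_j \leftrightarrow c_j(z_1+\cdots)$,'' asserting that the Cartan structure ``only modifies strictly higher-degree terms.'' This is false for $n>1$. The element $z_1+\cdots+z_m$ lies in $K(n)^1$, and by part (i) of the very theorem you are proving, $c_j$ vanishes on $K(n)^1$ unless $j\equiv 1 \pmod{p^n-1}$; so $c_2(z_1+\cdots)=\cdots=c_{p^n-1}(z_1+\cdots)=0$, and these cannot account for $\sigma_2,\ldots,\sigma_{p^n-1}$. (Equivalently: $F_{K(n)}$'s first nonlinear term sits in degree $p^n$, not degree $2$ as for $F_m$, so Cartan's formula gives $c_{tot}(z_1+\cdots)=\alpha\sigma_1+(\text{degree}\ge p^n)$ -- the intermediate $\sigma_j$ are simply absent.) The change of variables is not triangular and not surjective; unlike the $K_0$ case, not every symmetric $Q$ can arise from an operation. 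Your claim that ``Step~2 of Theorem~\ref{basis} transfers verbatim'' also conflates the two halves of that argument: the statement ``$\phi$ vanishing on $z_1+z_2+\cdots$ implies $\phi=0$'' requires Step~1 (every monomial without constant term is a pull-back of $q$), and Step~1 relies on the fact that iterated $F_m$-Segre maps produce $\sigma_1+\cdots+\sigma_{r+1}$. For $F_{K(n)}$ the analogous iterated Segre maps only ever produce $F$-sums, whose first nonlinear term has degree $p^n$, so one can never reach low-degree monomials such as $z_1z_2$ and Step~1 fails. The paper avoids all of this in its proof of (iii): it first establishes rational freeness via the Chern character (Prop.~\ref{chern_polybase}), and then proves $p$-integrality of the coefficients by a descending induction on the order of derivatives (Lemma~\ref{polyderint}, Prop.~\ref{prop_chern_base}), ultimately reducing to the $\F{p}$-linear independence of external products of $\phi_i^{p^s}$ (Cor.~\ref{cr_polyadd}).
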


\begin{Rk}\label{rem:petsem}
V.Petrov and N.Semenov introduced
 operations $c_1, c_2,\ldots,  c_{p^n}$ in \cite{PS_PetSem} 
 as additive operations to Chow groups
where summation was changed. 

They did not use the COT, which was not available at that time,
and the operations were constructed to $CH^*\ot\Zp$ modulo torsion.
\end{Rk}

\begin{Rk}
It follows from the theorem and the COT
that there are no relations between products of Chern classes modulo $p$,
i.e. $\F{p}[[c_1,\ldots,c_i,\ldots]]\subset [\tilde{K}(n), CH^*/p]$.

However, we will show in Section \ref{op_modp} that for $n>1$ there exist additive operations 
from $K(n)^*$ to $CH^*/p$
which are not liftable even as non-additive operations to $CH^*\ot\Zp$. 
This is different from the situation with the usual Chern classes from $K_0$ (i.e. the case $n=1$).
\end{Rk}

\begin{Rk}\label{rem_chern_n1}
By Remark \ref{rem_k0-mor} the theorem  is true for 
all theories $K(1)$ additively isomorphic to $K_0\otimes\Zp$.

Let us consider the case of $K(1)$ from Remark \ref{rem_k0-mor}.
By Prop. \ref{prolong} there exist Chern classes $c^{K_0}_i:K_0\ot\Zp\rarr CH^i\ot\Zp$,
satisfying the usual Cartan's formula 
which can be written as $c^{K_0}_{tot}(x+y)=F_m(c^{K_0}_{tot}(x),c^{K_0}_{tot}(y))$. 

Denote by $\chi:K(1)\rarr K_0\ot\Zp$ a stable invertible 
multiplicative operation corresponding to the morphism of formal group laws
$(\Zp, F_{K(1)})\rarr (\Zp, F_m)$ given by $(id, \gamma)$
where $\gamma$ is the inverse to the Artin-Hasse exponential.
Denote by $\bar{c}_{tot}=c^{K_0}_{tot}\circ \chi:K(1)\rarr CH^*\ot\Zp$ the composition of it 
with the usual total Chern class.
Since $\chi$ is multiplicative, and so, additive,
the usual Cartan's formula holds for $\bar{c}_{tot}$.

Define $c_i:K(1)\rarr CH^i\ot\Zp$ to be the $i$-th graded component of $\gamma(\bar{c}_{tot})$.
It satisfies property ii) of Theorem \ref{main} by construction and property iii) 
can be easily deduced from Theorem \ref{basis}. Property i) can be checked in the same manner
as it will be done for $K(n)^*$ in Section \ref{sec_cor}.
\end{Rk}

Briefly the proof of the theorem goes as follows.
We construct Chern classes by induction on degree. 
From the Cartan's formula it follows that the derivative $\partial^1 c_i$ of the operation $c_i$
is equal to a polynomial in Chern classes of smaller degree. 
To define $c_i$ one calculates an {\it anti-derivative} of $\partial^1 c_i$ 
as a $\QQ$-polynomial in $c_j$, $j<i$. The operation $c_i$
 may differ from this polynomial by any additive operation. 
Our goal is to find an additive operation $\psi_i$
s.t. its sum with the anti-derivative above is an integral operation, to be denoted $c_i$.
We reduce the problem of existence of $\psi_i$ to a certain question
 about additive operations from $K(n)^*$ to $CH^*/p$ (Lemma \ref{lift_add}).
This is done in Section \ref{chern_constr}. 

In Section \ref{op_modp} we investigate additive operations from $K(n)^*$ to $CH^*/p$.
Though we notice that there are many of them which are non-liftable to integral operations,
we find a sufficient condition for liftability as an additive operation that is sufficient 
for our purposes. 

It should be noted that in order to construct Chern classes 
we use only the CAOT  which appeared in \cite{PS_Vish1}. 
However, to show that constructed Chern classes form 
free generators of the ring of all operations we have to use poly-operations and the COT 
as stated in \cite{PS_Vish2}.

It easily follows from Prop. \ref{ch-base} 
that operations $c_i$ provide rational 
generators of the ring of all operations if the Chern character is expressible
as a rational series in these operations. The latter property will be satisfied by construction.
Thus, to prove iii) of Theorem \ref{main} it is enough to show that a non-integral polynomial in Chern classes
 will yield a non-integral operation. 
To do this we make a careful study of derivatives of poly-operations defined by polynomials in Chern classes finally
  reducing the question to poly-additive poly-operations.
  The latter problem  is quite easy. This is done in Section \ref{chern-base}. 
 In fact, we also show that external products of Chern classes provide free generators of poly-operations.

\subsection{Corollaries of the main theorem}\label{sec_cor}

In this section we deduce several corollaries of Theorem \ref{main}.
These corollaries are not applications of our result, but on the contrary they provide us a clue of how 
to construct Chern classes and prove the theorem. 
Lemma \ref{lm_add_powers} is proved unconditionally 
and will be used in subsequent sections as a tool, 
while Cor. \ref{cr_chern_exist} and \ref{cr_kn_add}
will be proved by the end of Section \ref{op_modp}.

{\sl Notation.}
Denote by $\nu_p(a)$ the $p$-valuation of a rational number $a\in \QQ$.
Let $Q$ be a polynomial with rational coefficients: 
$Q=\sum a_{(i_1,\ldots, i_n)} t_1^{i_1}\cdots t_n^{i_n}$.
Denote by $\nu_p(Q)$ the smallest $p$-valuation of coefficients of monomials of $Q$,
i.e. $\nu_p(Q) = \min \nu_p (a_{(i_1,\ldots, i_n)})$. 
Thus, $Q$ has coefficients in $\Zp$ if and only if $\nu_p(Q)\ge 0$. 
If $Q$ is $p$-integral, then it is divisible by $p$ if and only if $\nu_p(Q)>0$.

Let $P$ be a polynomial in variable $t$ over some ring $A$. Denote by $P[t^s]$
the coefficient of $t^s$ in $P$ (which is an element of $A$).\\

Define rational polynomials $P_i\in \QQ[c_1,\ldots,c_{i-1}]$ for each $i\ge 1$
 by the following formula:
$(\log_{K(n)} c_{tot})_i = c_i - P_i$. 
Here $(\log_{K(n)} c_{tot})_i$ is the $i$-th graded component of the series $\log_{K(n)} (c_1+c_2+\ldots)$
where $c_i$ has degree $i$.
For example, $P_1 = \ldots = P_{p^n-1} = 0$, $P_{p^n} = -\frac{a_1}{p}(c_1)^{p^n}$.

Note that from the Cartan's formula it follows 
that $(\log_{K(n)} c_{tot})_i$ 
defines an additive operation from $K(n)^*$ to $CH^i\ot\QQ$,
where $c_j$'s are operations from Theorem \ref{main}.
By Prop. \ref{add_rank}  this additive operation can be written as 
$p^{-\mu_i} \phi_i$, where $\mu_i\in \ZZ$ and $\phi_i$ is a generator of integral additive operations.

By iii) of Theorem \ref{main} the operation $\phi_i$ should be
 uniquely expressed as an integral polynomial in $c_1, \ldots, c_i$.
Moreover, this polynomial should not be zero modulo $p$ 
as $\phi_i$ is a generator of integral additive operations.
Combined with formulas above this gives us an equation $\phi_i = p^{\mu_i}c_i - p^{\mu_i}P_i$.
Thus, $\mu_i \ge 0$ (as $P_i$ does not contain $c_i$) 
and $p^{\mu_i}$ is divisible by the least common multiple of $p$-factors of denominators of $P_i$.
 
 More precisely, $\mu_i = \max(0, -\nu_p(P_i))$.  
 Indeed, if $P_i$ is integral (i.e. $\nu_p(P_i)\ge 0$),
  then the expression $c_i-P_i$ is already integral and not zero modulo $p$ 
  (as $P_i$ does not depend on $c_i$). 
 Otherwise, if $\nu_p(P_i)< 0$, we have $\mu_i = -\nu_p(P_i)$ 
 and the expression $\phi_i = p^{-\nu_p(P_i)}c_i - p^{-\nu_p(P_i)}P_i$ is integral
 and not zero modulo $p$.
 
Thus, we have proven the following Corollary,
which can be used to define operations $c_i$ inductively.

\begin{Cr}\label{cr_chern_exist}
Let $P_i$ be as above, and $\mu_i= \max(0, -\nu_p(P_i))$.

Then there exist a generator $\phi_i$ of additive operations $[\tilde{K}(n), CH^i\ot\Zp]^{add}$
s.t. the following equality between operations to $CH^i\ot\QQ$ holds
$$c_i = \frac{\phi_i}{p^{\mu_i}}+P_i(c_1,\ldots,c_{i-1}).$$
\end{Cr}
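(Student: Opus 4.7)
The plan is to read this corollary directly off the Cartan formula of Theorem \ref{main}(ii), the rank-one classification of integral additive operations from Proposition \ref{add_rank}, and the freeness statement of Theorem \ref{main}(iii); the remaining content is a short $p$-valuation check.

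First I would observe that the defining equation $c_i - P_i = (\log_{K(n)} c_{tot})_i$ exhibits $c_i$, modulo a rational polynomial in lower Chern classes, as the $i$-th graded component of $\log_{K(n)}(c_{tot})$. Applying $\log_{K(n)}$ to both sides of the Cartan formula $c_{tot}(x+y) = F_{K(n)}(c_{tot}(x), c_{tot}(y))$ converts formal group addition into ordinary addition, so $\log_{K(n)} c_{tot}$ is an additive operation from $\tilde{K}(n)^*$ to $CH^*\otimes\QQ$; each graded component is therefore also additive, and vanishes at $0$ since $c_{tot}(0)=0$. By Proposition \ref{add_rank} the integral additive operations $[\tilde{K}(n)^*, CH^i\otimes\Zp]^{add}$ form a free $\Zp$-module of rank one sitting inside the one-dimensional $\QQ$-space $[\tilde{K}(n)^*, CH^i\otimes\QQ]^{add}$, so for any integral generator $\phi_i'$ there is a unique $\lambda\in\QQ$ with $(\log_{K(n)} c_{tot})_i = \lambda\cdot\phi_i'$, i.e. $c_i = \lambda\phi_i' + P_i$.

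The remaining task is to show that $\lambda$ can, up to a $\Zp$-unit adjustment of the generator, be chosen to equal $p^{-\mu_i}$ with $\mu_i = \max(0,-\nu_p(P_i))$. For this I would invoke Theorem \ref{main}(iii): the operation $\phi_i := \lambda^{-1}(c_i - P_i) = \lambda^{-1}c_i - \lambda^{-1}P_i$ must be expressible as an integral power series in the $c_j$'s, and, being a generator rather than a $p$-multiple of one, this expression must be nonzero modulo $p$ (otherwise $\phi_i/p$ would still be an additive operation, contradicting genericity). Integrality of $\lambda^{-1}c_i - \lambda^{-1}P_i$ forces $\nu_p(\lambda^{-1}) \ge 0$ and $\nu_p(\lambda^{-1}) + \nu_p(P_i) \ge 0$, hence $\nu_p(\lambda^{-1}) \ge \max(0,-\nu_p(P_i))$. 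Equality is then forced by the mod-$p$ nonvanishing: when $\nu_p(P_i)\ge 0$ one can take $\lambda^{-1}=1$ and the coefficient $1$ of $c_i$ is nonzero modulo $p$; when $\nu_p(P_i)<0$ the choice $\lambda^{-1}=p^{-\nu_p(P_i)}$ makes $-\lambda^{-1}P_i$ a polynomial of $p$-valuation exactly $0$, while $\lambda^{-1}c_i$ becomes divisible by $p$, so the mod-$p$ reduction is precisely $-\bar{p^{\mu_i} P_i}\neq 0$.

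The only real subtlety is this mod-$p$ genericity check, which is what ensures we extract an actual generator rather than a nonzero $\Zp$-multiple of one; everything else is formal packaging of the classification results already established. It is worth noting that the corollary is really a statement about the denominators appearing in $\log_{K(n)}$ applied to a formal series in lower Chern classes, and the minimality of $\mu_i$ is dictated precisely by the fact that $c_i$ contributes a coefficient of $1$ (and hence vanishes mod $p$ whenever $\mu_i>0$), forcing the polynomial part $-p^{\mu_i}P_i$ to do all the mod-$p$ work.
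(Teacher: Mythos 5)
Your argument is correct and follows the same route as the paper's own derivation: observe via the formal-group logarithm that $(\log_{K(n)} c_{tot})_i = c_i - P_i$ is additive, express it as a rational multiple of an integral generator using the rank-one statement of Prop.\ \ref{add_rank}, and then pin down the $p$-valuation of that multiple by combining Theorem \ref{main}(iii) with the requirement that a generator be nonzero modulo $p$. The only cosmetic difference is that you spell out the $\nu_p$ bookkeeping and the choice of unit normalization slightly more explicitly than the paper does.
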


We will need the following Proposition (proved unconditionally)
to yield several polynomial relations between additive operations $\phi_i \mod p$.

\begin{Prop}\label{prop:log_morava}
Let $F$ be the FGL of an $n$-th Morava K-theory,
and denote the coefficients of its logarithm via the following formula:
$$ \log_{F}(x) = \sum_{i=0}^\infty b_i x^{p^{ni}},$$
where $b_i\in\QQ$, $i\ge 0$, $b_0=1$.

Then $b_i = \frac{a_i}{p^i}$ where $a_i\in\Zp^\times$,
and, moreover, $a_k \equiv (a_1)^k \mod p$ for $k\ge 1$.
\end{Prop}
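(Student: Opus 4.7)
The plan is to argue from the functional equation $\log_F \circ [p]_F = p \cdot \log_F$, which is equivalent to the definition of the $p$-series $[p]_F = \log_F^{-1} \circ (p\cdot) \circ \log_F$. Write $[p]_F(x) = \sum_{j\ge 1} e_j x^j \in \Zp[[x]]$ (integral since $F$ is), with $e_1 = p$; the height-$n$ hypothesis modulo $p$ means $e_j \equiv 0 \pmod p$ for $1 \le j < p^n$ and $e_{p^n} \in \Zp^\times$. I will reparameterize by setting $\mu_i := p^i b_i$, so the desired conclusion becomes $\mu_i \in \Zp^\times$ with $\mu_k \equiv \mu_1^k \pmod p$ (taking $a_i := \mu_i$).

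Let $C_{i,k}$ denote the coefficient of $x^{p^{nk}}$ in $[p]_F(x)^{p^{ni}}$. Expanding the functional equation and comparing coefficients of $x^{p^{nk}}$, then multiplying by $p^k$ and moving the $i=k$ summand (which equals $\mu_k p^{p^{nk}}$, absurdly $p$-divisible) to the left-hand side, one obtains
\begin{equation*}
\mu_k \, p \, (1 - p^{p^{nk}-1}) \;=\; \sum_{i=0}^{k-1} p^{k-i}\, \mu_i \, C_{i,k},
\end{equation*}
where $1 - p^{p^{nk}-1}$ is a unit in $\Zp$ for $n \ge 1$. The analysis of the $C_{i,k}$ uses the freshman's dream: since $c^p \equiv c \pmod p$ in $\Zp$, every $f \in \Zp[[x]]$ satisfies $f(x)^p \equiv f(x^p) \pmod p$, and iterating gives $[p]_F(x)^{p^{ni}} \equiv [p]_F(x^{p^{ni}}) \pmod p$, whence $C_{i,k} \equiv e_{p^{n(k-i)}} \pmod p$ for all $0 \le i \le k-1$.

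The proof then proceeds by induction on $k \ge 1$. The base case $k=1$ gives $\mu_1 \, p \, (1 - p^{p^n-1}) = p\, e_{p^n}$ directly, hence $\mu_1 = e_{p^n}/(1-p^{p^n-1}) \in \Zp^\times$ and $\mu_1 \equiv e_{p^n} \pmod p$. For $k \ge 2$, assume $\mu_j \in \Zp^\times$ with $\mu_j \equiv \mu_1^j \pmod p$ for all $j < k$. Summands with $i < k-1$ carry the prefactor $p^{k-i} \ge p^2$ and have all other factors integral, so they vanish modulo $p^2$; only the $i = k-1$ term survives, contributing $p\, \mu_{k-1}\, e_{p^n}$ modulo $p^2$. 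Dividing by $p$ and by the unit $(1-p^{p^{nk}-1})$ yields $\mu_k \equiv \mu_{k-1}\, e_{p^n} \equiv \mu_1^{k-1} \cdot \mu_1 = \mu_1^k \pmod p$, a unit, so $\mu_k \in \Zp^\times$ as required. The main hurdle is really just the careful $p$-adic bookkeeping that separates the divergent $i=k$ term, the negligible $i < k-1$ terms, and the crucial $i=k-1$ contribution that drives the induction; the freshman's dream delivers exactly the precision (mod $p$) needed after dividing through by one factor of $p$.
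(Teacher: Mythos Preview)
Your proof is correct and takes a more elementary route than the paper's. The paper invokes Cartier's theorem that $F$ is $p$-typical, classifies $F$ via a ring map $\pi\colon BP=\Zp[v_1,v_2,\ldots]\to\Zp$, spends a paragraph showing (via gradability of $[p]_F$) that $\pi(v_j)=0$ whenever $n\nmid j$, and then feeds the Araki relations $pl_m=\sum_{i<m} l_i v_{m-i}^{p^i}+p^{p^m}l_m$ into the same induction you run. Your argument bypasses all of this machinery: the identity $\log_F\circ[p]_F=p\log_F$ together with the freshman's-dream congruence $[p]_F(x)^{p^{ni}}\equiv [p]_F(x^{p^{ni}})\pmod p$ yields the recursion directly, and the only input from the height hypothesis you actually use is $e_{p^n}\in\Zp^\times$. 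The paper's route has the mild advantage of identifying $a_1\equiv\pi(v_n)\pmod p$ explicitly in $BP$ language, but your approach is self-contained and shorter, and the two recursions are visibly the same once one notes that your $C_{i,k}$ plays the role of the paper's $\pi(v_{(k-i)n})^{p^{in}}$ modulo $p$.
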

\begin{proof}
By the result of Cartier the formal group law $F$
is $p$-typical (see e.g. \cite[I.§4]{PS_Araki}), 
and thus is classified 
by a map from the polynomial ring $BP=\Zp[v_1,v_2,\ldots]$ ([op.cit.]).
There exist the universal $p$-typical formal group law over $BP$
defined by the logarithm $\log_{BP} x=\sum_{i=0}^\infty l_ix^{p^i}$
where coefficients $l_j$ are expressed in terms of 
the generators $v_i$ by the Araki relations ([I.§6.4 (6.12), loc.cit.]):

$$pl_m = \sum_{i=0}^{m-1} l_i v_{m-i}^{p^i}+p^{p^m}l_m.$$

Clearly, the map $\pi:BP\rarr \Zp$ corresponding to the formal group law $F$
is determined by the fact that its rationalization $BP\ot\QQ\rarr \QQ$
sends $l_{ni}$ to $b_i$, and sends $l_j$ to zero whenever $n\nmid j$.

Recall also that for the universal $p$-typical 
formal group law we have 
$ [p]\cdot_{BP} x = px+_{BP}\sum^{BP}_{i\ge 1} v_ix^{p^i}$,
and therefore ${[p]\cdot_F x = px+_F \sum^F_{i\ge 1} \pi(v_i)x^{p^i}}$.
However, the $p$-series $[p]\cdot F=\log^{-1}_F (p\log F)$
 contains only monomials 
 which have degree in $x$ equal to 1 modulo $p^n-1$ 
 (let us call this 
property {\it gradability} throughout this proof).
In particular, it contains no terms of degree $i$ where $1<i<p^n$,
 and thus the morphism $\pi$ sends $v_i$ to zero for $i<n$.
The condition that $F \mod p$ has height $n$,
i.e. $[p]\cdot_F x \equiv ax^{p^n} \mod (p, x^{p^n+1})$
for some $a\in \F{p}^\times$,
means that $\pi(v_n)\in\Zp^\times$.

Let us show that $\pi$ sends $v_j$ to zero
whenever $n\nmid j$. Suppose that $i_0=\min\{j: \pi(v_j)\neq 0, n\nmid j\}$ is finite.
As mentioned above, we have 
$[p]\cdot_F x = px+_{F} \sum_i^{F} \pi(v_i)x^{p^i}$.
By our induction assumption it can be rewritten as
\begin{equation}\label{eq:p-sum-bpn}
[p]\cdot_{F} x = 
\sum_{jn<i_0}\nolimits^{F} \pi(v_{jn})x^{p^{jn}}+_{F}\pi(v_{i_0})x^{p^{i_0}}
+_{F}\mathrm{higher\ degree\ terms}.
\end{equation}

We claim that the RHS of (\ref{eq:p-sum-bpn}) 
has a summand $\pi(v_{i_0})x^{p^{i_0}}$ which contradicts 
the gradability of $[p]\cdot_F x$.
The first summand of the RHS of (\ref{eq:p-sum-bpn}) 
satisfies gradability up to degree bigger than $p^{i_0}$.
This follows from the fact that $F$ has summands only of bidegree $(i,j)$
where $i+j \equiv 1 \mod p^n-1$ (Prop. \ref{prop:mor_grad}), and the sum over $F$
of monomials having degree equal to 1 modulo $p^n-1$ clearly satisfies gradability.
The second summand of the RHS of (\ref{eq:p-sum-bpn}) 
yields $\pi(v_{i_0})x^{p^{i_0}}$, and all other summands in the RHS
have higher degrees in $x$, and therefore the claim is proved.

Now we can rewrite Araki relations pushed forward to $F$ 
and multiplied by a power of $p$ as 
\begin{equation}\label{eq:araki_modified}
p^{k}(1-p^{p^{nk}-1})b_{k} = \sum_{i=0}^{k-1} p^{k-1}b_{i}\pi(v_{(k-i)n})^{p^{in}},
\end{equation}
and use them to show that $p^kb_k\in\Zp^\times$,
and $p^kb_{k} \equiv (pb_1)^k \mod p$.

{\bf Base of induction ($k=1$).}  Denote by $a_1:=pb_1$.
From Araki relations (\ref{eq:araki_modified})
 it follows that $a_1\in\Zp^\times$
 and $a_1\equiv \pi(v_n) \mod p$.

{\bf Induction step.}
By induction we see that the RHS of Araki relations (\ref{eq:araki_modified}) is integral
and reducing modulo $p$ we get 
$$ p^kb_{k} \equiv p^{k-1}b_{(k-1)}\pi(v_n)^{p^{(k-1)n}} \equiv (a_1)^{k-1}a_1=(a_1)^k \mod p.$$
\end{proof}

Now we will calculate $\mu_i$ of Cor. \ref{cr_chern_exist} precisely,
 and this is done in the next Lemma which is proved unconditionally.
 Let us fix $n$-th Morava K-theory
for what follows 
with the logarithm 
$\log_{K(n)}(x) = x+\sum_{i\ge 1} \frac{a_i}{p^i}x^{p^{ni}}$
where $a_i \in \Zp^\times$ and $(a_1)^i\equiv a_i \mod p$
according to Prop. \ref{prop:log_morava}.

\begin{Lm}\label{lm_add_powers}

Let $C:=c_1 t+ c_2t^2 + \ldots = \sum_{i=1}^\infty c_it^i$  be a formal power series
in variables $c_i$, i.e. $C\in \ZZ[c_1,c_2,\ldots ][[t]]$. 
Define $P_s\in\QQ[c_1,c_2,\ldots, c_{s-1}]$
to be equal to $c_s-\log_{K(n)}(C)[t^s]$ for $s\ge 1$.

Fix $i>0$.

\begin{itemize}

\item[1.] Let $j$ be such that $j\neq i \mod (p^n-1)$. 
Set variables $c_s$ to be zero whenever $s \neq j \mod (p^n-1)$. 

Then the respective specialization $\tilde{P}_i$ of the polynomial $P_i$ is zero.

\item[2.] Let $v$ be such that $i=p^{nk}v$ and $p^n \nmid v$.
Then  $\nu_p(P_i)\ge -k$. 

\item[2p.] If moreover $k>0$, 
then $\nu_p(P_i)=-k$. 

Set variables $c_s$ to be zero whenever $s\neq i \mod (p^n-1)$.
Denote the respective specializations of the polynomials $P_i$ and $P_v$ 
by  $\tilde{P}_i$ and $\tilde{P}_v$. 

Then over $\F{p}[c_1,\ldots, c_i]$ polynomials 
$(p^k\tilde{P}_i \mod p)$ and $ ((c_v-\tilde{P}_v)^{p^{nk}} \mod p)$ are proportional.

\end{itemize}
\end{Lm}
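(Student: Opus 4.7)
The plan is to work from the explicit formula
\[ P_s = -\sum_{m\ge 1}\frac{a_m}{p^m}\, C^{p^{nm}}[t^s], \]
obtained by unwinding $P_s = c_s - \log_{K(n)}(C)[t^s]$ with $\log_{K(n)}(x) = x + \sum_{m\ge 1}\tfrac{a_m}{p^m}x^{p^{nm}}$. Expanded multinomially, each contribution is $\binom{p^{nm}}{r_1,r_2,\ldots}\prod_j c_j^{r_j}$ with $\sum r_j = p^{nm}$ and $\sum j r_j = s$; by Kummer/Legendre the multinomial has $p$-valuation $\tfrac{1}{p-1}\bigl(\sum_j s_p(r_j)-1\bigr)$, which vanishes precisely on the \emph{pure power} configurations $c_j^{p^{nm}}$ (those with a single $r_j = p^{nm}$ and $jp^{nm}=s$) and is $\ge 1$ otherwise.

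For Part~1, after the specialization only $c_s$ with $s\equiv j\pmod{p^n-1}$ survive, so every monomial $\prod c_{s_\alpha}$ arising in $\tilde C^{p^{nm}}$ has $t$-degree $\sum s_\alpha \equiv p^{nm}j\equiv j\pmod{p^n-1}$, using $p^n\equiv 1\pmod{p^n-1}$. Since $i\not\equiv j$, no such monomial contributes to $[t^i]$ and hence $\tilde P_i=0$.

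The heart of Part~2 is the sharpened inequality
\[ v_p\bigl(C^{p^{nm}}[t^i]\bigr)\ge m-k \quad\text{for all } m>k, \]
which, combined with the pure-power valuations $-m\ge -k$ at $m\le k$ (where the pure power is $c_{p^{n(k-m)}v}^{p^{nm}}$), yields $\nu_p(P_i)\ge -k$. I would prove it by induction on $k$ with $m$ arbitrary, using two elementary congruences: the freshman's dream $C^{p^n}\equiv C^{(p^n)}\pmod p$, where $C^{(p^n)}:=\sum_j c_j^{p^n}t^{p^nj}$, together with $(A+pB)^{p^N}\equiv A^{p^N}\pmod{p^{N+1}}$ (from $v_p\binom{p^N}{r}=N-v_p(r)$). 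Applying these to $C^{p^{nm}}=(C^{(p^n)}+pD)^{p^{n(m-1)}}$ with $N=n(m-1)$ gives
\[ C^{p^{nm}}[t^i]\equiv \bigl(C^{(p^n)}\bigr)^{p^{n(m-1)}}[t^i]\pmod{p^{n(m-1)+1}}, \]
and $n(m-1)+1\ge m-k$ for $n,m\ge 1$, $k\ge 0$. The base case $k=0$ is then immediate because $(C^{(p^n)})^{\bullet}$ is concentrated in $t$-degrees divisible by $p^n$ while $p^n\nmid i$. For $k\ge 1$, substituting $\tau=t^{p^n}$ identifies $\bigl(C^{(p^n)}\bigr)^{p^{n(m-1)}}[t^i]$ with $\tilde C^{p^{n(m-1)}}[\tau^{i/p^n}]$, where $\tilde C=\sum_j c_j^{p^n}\tau^j$ has the same structural form as $C$; the inductive hypothesis applied at $(m-1,k-1)$ closes the step.

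Part~2p has two halves. The equality $\nu_p(P_i)=-k$ is immediate from the preceding analysis: $c_v^{p^{nk}}$ appears in $\sum_m\frac{a_m}{p^m}C^{p^{nm}}[t^i]$ only via the pure power at $m=k$, $j=v$, contributing exactly $-a_k/p^k$, which no other monomial can cancel. For the proportionality $p^k\tilde P_i\equiv\lambda(c_v-\tilde P_v)^{p^{nk}}\pmod p$, I would expand $(c_v-\tilde P_v)^{p^{nk}}\equiv c_v^{p^{nk}}-\tilde P_v^{p^{nk}}\pmod p$ by the freshman's dream, fix $\lambda=-a_k$ by matching the unique $c_v^{p^{nk}}$-term, and verify the remaining identity by iterating the Part~2 congruence to express $C^{p^{nk}}[t^i]\bmod p$ purely in terms of $p^{nk}$-th powers. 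The hard part will be checking that the $m>k$ contributions to $p^k\tilde P_i$ assemble modulo $p$ into exactly $a_k\tilde P_v^{p^{nk}}$; I expect this to reduce to a term-by-term comparison after the substitution $t\mapsto\tau^{p^{nk}}$, $c_j\mapsto\tilde c_j^{p^{nk}}$, invoking the congruence $a_m\equiv a_1^m\pmod p$ of Prop.~\ref{prop:log_morava} to align the leading coefficients.
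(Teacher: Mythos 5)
Your proposal is correct and proceeds by a genuinely different route from the paper's proof, at least for parts 2 and 2p.

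For part 2, the paper argues by differentiating the $t$-series: it notes that $\bigl(a_{k+r}p^{k}\tfrac{C^{p^{n(k+r)}}}{p^{k+r}}\bigr)'$ has coefficients divisible by $p^{nk+r(n-1)}$, while extracting $[t^i]$ via differentiation multiplies by $i=p^{nk}v$, so the comparison $\nu_p(i)\le nk+n-1\le nk+r(n-1)$ gives integrality. You instead iterate the congruence
\[
C^{p^{nm}}[t^i]\equiv\bigl(C^{(p^n)}\bigr)^{p^{n(m-1)}}[t^i]\pmod{p^{n(m-1)+1}},
\]
reduce to $\tilde C^{p^{n(m-1)}}[\tau^{i/p^n}]$, and induct on $k$; the check $(m-1)(n-1)+k\ge 0$ replaces the paper's valuation comparison. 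Both work; yours has the advantage of directly producing the congruence you then reuse in 2p, whereas the paper needs a fresh analysis there.

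For part 2p, the paper splits into the cases $\nu_p(v)<n-1$, $\nu_p(v)=n-1$ with $n>1$, and $n=1$, uses the specialization of variables, changes variables to a one-variable series $f$, and applies two explicit facts about multinomial coefficients $\binom{p^m}{b_1p^{m-1},\dots}$. You avoid the case split entirely by iterating the same congruence $nk$ more times to get $C^{p^{n(k+r)}}[t^i]\equiv\mathrm{Frob}^{nk}\bigl(C^{p^{nr}}[t^v]\bigr)\pmod{p^{nr+1}}$, dividing by $p^r$ and noting $nr+1-r=r(n-1)+1\ge 1$, and then identifying $\mathrm{Frob}^{nk}(f)\bmod p$ with $f^{p^{nk}}\bmod p$ on $p$-integral polynomials. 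Together with $a_{k+r}\equiv a_k\,a_r^{p^{nk}}\pmod p$ (which follows from $a_m\equiv a_1^m$ and Fermat), this matches the $m=k+r$ contribution to $-p^kP_i$ with the $r$-th term of $a_k\bigl(c_v-P_v\bigr)^{p^{nk}}$ term by term. In fact this gives the proportionality \emph{without} the specialization; the specialization in the lemma is then a harmless weakening. One caveat: your last paragraph is phrased speculatively ("I expect\dots"), and the substitution is written as $t\mapsto\tau^{p^{nk}}$, $c_j\mapsto\tilde c_j^{p^{nk}}$ when what you want is the identification $\tau=t^{p^{nk}}$, $\tilde c_j=c_j^{p^{nk}}$ (equivalently, the Frobenius $c_j\mapsto c_j^{p^{nk}}$ combined with extracting the $\tau^v$-coefficient). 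These are notational slips rather than gaps: once the congruence and the Frobenius identification are written down as above, the proof is complete, uniform in $n$ and $r$, and arguably cleaner than the paper's case analysis.
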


\begin{Rk}
Perhaps, to calm oneself after the statement of this ad-hoc Lemma,
let us briefly explain how it will be used later.
In the course of an inductive construction of Chern classes the operation $c_i$
 will be expressed as a sum of $P_i$ 
and some additive operation. 
Part 1 of the Lemma will be enough to prove that $c_i$ is supported
on $K(n)^{i\mod (p^n-1)}$ (which is  Theorem \ref{main}, i)).
Part 2p will be used to show that $p^kP_i$ defines a non-trivial operation modulo $p$.
Also Corollary \ref{cr_kn_add} follows from it, which {\it indirectly} confirms 
and motivates results of Section \ref{op_modp}.
\end{Rk}

\begin{proof}
1. For $j: 1\le j\le p^n-1$ 
denote by $C_{[j]}:=\sum_{r=0}^\infty c_{j+r(p^n-1)}t^{j+r(p^n-1)}$
the specialization of the formal power series $C$ at $c_s=0$, for all $s \neq j \mod (p^n-1)$.

 We need to show that the coefficient of $t^i$ in
 $\log_{K(n)} C_{[j]}=C_{[j]}+\frac{a_1}{p}C_{[j]}^{p^n}+\ldots$ equals to zero.
However, the product of $p^{kn}$ monomials $c_{j+r(p^n-1)}t^{j+r(p^n-1)}$ has the power of $t$ 
equal to $jp^{kn}\equiv j$ modulo $p^n-1$.

2. Let us prove that $p^kP_i$ is an integral polynomial. 
We need to show that the coefficient of $t^i$ 
in $p^kC + a_1p^{k-1}C^{p^n} + \ldots + a_kC^{p^{nk}} + \frac{a_{k+1}}{p}C^{p^{n(k+1)}}+\ldots$
is a polynomial over $\Zp$. 
First $(k+1)$ summands of this expression 
obviously produce an integral polynomial. 
Other summands are of the form $a_{k+r}p^k\frac{C^{p^{n(k+r)}}}{p^{k+r}}$ for $r>0$.
The derivative by $t$ of this expression is equal to $a_{k+r}p^{kn+r(n-1)} C^{p^{n(k+r)}-1} C'$,
which is an integral polynomial with coefficients divisible by $p^{kn+r(n-1)}$.
On the other hand, the coefficient of $t^i$ is multiplied by $i=p^{nk}v$ after differentiation. 
Comparing these two statements and using comparison $\nu_p(i)=nk+\nu_p(v)\le nk+n-1 \le nk+r(n-1)$
 we get that the coefficient of $t^i$ is integral,
and the claim is proved. 

2p. Two cases can be dealt with separately. 

First, assume $\nu_p(v)<n-1$. Then we can prove the proportinality
of polynomials in the statement with 
$P_i$ and $P_v$ instead of $\tilde{P}_i, \tilde{P}_v$.

We have $\nu_p(i)<nk+n-1$ and by the argument from part 2,  
the summands $a_{k+r}p^k\frac{C^{p^{n(k+r)}}}{p^{k+r}}$
do not add anything non-divisible by $p$ for $r>0$.
Thus, the polynomial $-p^kP_i$ modulo $p$ equals to $a_kC^{p^{nk}}[t^i]$. 
The latter is simply $a_kc_v^{p^{nk}}$.

Using the same argument one gets that  $c_v-P_v$ modulo $p$ equals to $C[t^v]$
 and thus is $c_v$. The claim follows.

Second, $\nu_p(v)=n-1$ and assume that $n>1$.
Then again using the same argument we obtain that the polynomial $-p^kP_i$ equals 
$(a_kC^{p^{nk}}+\frac{a_{k+1}}{p}C^{p^{n(k+1)}})[t^i]$ modulo $p$  
and $c_v-P_v$ equals $C[t^v]+\frac{a_1}{p}C^{p^n}[t^v]$ modulo $p$.
Clearly, $(C[t^v])^{p^k}=c_v^{p^{nk}}$ equals $C^{p^{nk}}[t^i]$ modulo $p$.
Our goal is to prove that $\frac{1}{p}C^{p^{n(k+1)}}[t^{p^{nk}v}]$ is equal
to $(\frac{1}{p}C^{p^n}[t^v])^{p^{nk}}$ modulo $p$ when specific variables in $C$ 
are taken to be zero.
As $a_{k+1}\equiv  (a_1)^{k+1} \mod p$, $a_k \equiv (a_1)^k \mod p$
 by Prop. \ref{prop:log_morava}
we will have $p^kP_i = (a_1)^kc_v^{p^{nk}}+(a_1)^{k+1} \frac{1}{p}C^{p^n}[t^v])^{p^{nk}}$
and $(c_v-\tilde{P}_v)^{p^{nk}}\equiv c_v^{p^{nk}} + 
(a_1)^{p^{nk}}(\frac{1}{p}C^{p^n}[t^v])^{p^{nk}}$.
As $(a_1)^{p^{nk}}\equiv a_1 \mod p$ the claimed propotionality would follow.

Let $l$ be s.t. $1\le l \le p^n-1$, $i \equiv l \mod (p^n-1)$ or in other terms, $v= l+s(p^n-1)$. 
From the equality $\nu_p(v)=n-1$ one easily gets that 
there exists $u\in\ZZ$ non-divisible by $p$ s.t. $s=l+up^{n-1}$, i.e. $v=l+(l+up^{n-1})(p^n-1)$.

Define $f(t)$ to be $\sum_{r\ge 0} c_{l+r(p^n-1)}t^r$, thus, $C_{[l]} = t^lf(t^{p^n-1})$.
In this notation we have two equalities:
\begin{equation} 
-p^k\tilde{P}_i = (a_kc_v^{p^{nk}}+\frac{a_{k+1}}{p}t^{lp^{n(k+1)}}f(t^{p^n-1})^{p^{n(k+1)}})[t^{p^{nk}v}] \mod p, 
\qquad
c_v-\tilde{P}_v = c_v+\frac{a_1}{p}t^{lp^n} f(t^{p^n-1})^{p^n}[t^v].
\end{equation}

We need to prove the equality $\frac{1}{p}t^{lp^{n(k+1)}}f(t^{p^n-1})^{p^{n(k+1)}}[t^{p^{nk}v}]
 = (\frac{1}{p}t^{lp^n} f(t^{p^n-1})^{p^n}[t^v])^{p^{nk}} \mod p$.
We may simplify it by changing variables $x=t^{p^n-1}$ and recalling
that $v-lp^n=up^{n-1}(p^n-1)$ and $p^{nk}v-lp^{n(k+1)}=up^{nk+n-1}(p^n-1)$.
Thus, we need to prove that $\frac{1}{p}f(x)^{p^{n(k+1)}}[x^{up^{nk+n-1}}]$
equals to $(\frac{1}{p}f(x)^{p^{n}}[x^{up^{n-1}}])^{p^{nk}}$.
This is already a rather universal equality since it has to be true for any positive integer 
$u$ non-divisible by $p$, any $k>0$
and any series $f$ (as it has coefficients which are independent variables).

To see this recall two simple facts about multinomial coefficients.
\begin{enumerate}
\item For any $m>0$ $p$-valuation $\nu_p\binom{p^m}{r_1,\ldots,r_k}=1$ 
if and only if $r_j=b_jp^{m-1}$ for all $j$, $0<b_j<p$.

In particular, $k$ is at most $p$ and allowing $b_j$ to be equal to 0
we may write these coefficients as $\binom{p^m}{b_1p^{m-1},\ldots,b_pp^{m-1}}$.

\item For any $a_j: 0\le b_j < p$ we have 
$$\frac{1}{p}\binom{p^m}{b_1p^{m-1},\ldots,b_pp^{m-1}} \equiv
\frac{1}{p}\binom{p}{b_1,\ldots,b_p} \mod p.$$
\end{enumerate}

Hence we have the following equalities modulo $p$:

$$\frac{1}{p}f(x)^{p^{n(k+1)}}[x^{up^{nk+n-1}}] = 
\left( \frac{1}{p}f(x)^p[x^u] \right)^{p^{nk+n-1}} =
\left( \frac{1}{p}f(x)^{p^n}[x^{up^{n-1}}] \right)^{p^{nk}}.  $$

The case $n=1$ is different,
as all monomials $\frac{a_{k+r}}{p^r}C^{p^{k+r}}[t^i]$ with $r\ge 0$ 
add something
non-trivial to $-p^kP_i$ modulo $p$, and similarly for $c_v-P_v$ modulo $p$.
However, the same strategy works,
but now we need to show that $(\frac{1}{p^r}C_{[l]}^{p^r}[t^v])^{p^k}$
is equal to $\frac{1}{p^r}C_{[l]}^{p^{k+r}}[t^{p^kv}]$ modulo $p$ for any $r>0$.
Changing variables in the same way as above,
this is equivalent to a universal equality
$(\frac{1}{p^r}f^{p^r}(x)[x^u])^{p^k} \equiv \frac{1}{p^r}f^{p^{k+r}}(x)[x^{p^ku}] \mod p$
which can be proven in a similar fashion.

\end{proof}

Assuming Theorem \ref{main} we deduce the following result,
which will be proven unconditionally later.

\begin{Cr}[cf. Cor. \ref{cr_k0_add}]\label{cr_kn_add}

Let $i=p^{nk}v$ where  $p^n \nmid v$, and 
let $\phi_i, \phi_v$ be generators of integral additive operations 
from $K(n)^*$ to $CH^i\ot\Zp$ and $CH^v\ot\Zp$, respectively.

Then $\phi_i \equiv a (\phi_v)^{p^{nk}} \mod p$ for some $a\in \F{p}^{\times}$.
\end{Cr}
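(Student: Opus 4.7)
The plan is to use Corollary \ref{cr_chern_exist} to express both $\phi_i$ and $(\phi_v)^{p^{nk}}$ modulo $p$ as explicit polynomials in Chern classes, match them via Lemma \ref{lm_add_powers}, and then upgrade the polynomial match (which initially holds only after a specialization) to the desired equality of operations using additivity.

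First I would dispose of the trivial case $k = 0$ (then $i = v$ and $a = 1$ works); assume $k \geq 1$. Part 2 of Lemma \ref{lm_add_powers} applied to $v$ (with the lemma's ``$k$'' equal to $0$) gives $\nu_p(P_v) \geq 0$; hence Corollary \ref{cr_chern_exist} yields $\mu_v = 0$ and $\phi_v = c_v - P_v$ integrally. Part 2p applied to $i = p^{nk} v$ gives $\nu_p(P_i) = -k$, so $\mu_i = k$ and $\phi_i = p^k(c_i - P_i)$. Reducing modulo $p$ and using $k \geq 1$,
\[
\phi_i \equiv -p^k P_i \pmod p, \qquad (\phi_v)^{p^{nk}} \equiv (c_v - P_v)^{p^{nk}} \pmod p,
\]
as operations (equivalently, as polynomials in Chern classes, by Theorem \ref{main} iii)). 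The second assertion of Lemma \ref{lm_add_powers} 2p then produces $a \in \F{p}^\times$ with $-p^k \tilde P_i \equiv a\,(c_v - \tilde P_v)^{p^{nk}} \pmod p$ after setting $c_s \mapsto 0$ for all $s \not\equiv i \pmod{p^n - 1}$.

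The main obstacle will be to remove this specialization. My strategy is to exploit that both $\phi_i$ and $(\phi_v)^{p^{nk}}$ become additive modulo $p$: the former by construction, the latter because the $p^{nk}$-power map is the iterated Frobenius on $CH^*/p$. Proposition \ref{add_grad}, combined with $i \equiv v \pmod{p^n - 1}$, then shows that both are supported on $\tilde{K}(n)^{i \bmod (p^n - 1)}$. For any element $y$ in this graded component, Theorem \ref{main} i) annihilates $c_s(y)$ whenever $s \not\equiv i \pmod{p^n - 1}$; hence each of the polynomials $-p^k P_i$ and $a\,(c_v - P_v)^{p^{nk}}$ evaluates on $y$ exactly as its specialization $-p^k \tilde P_i$, respectively $a\,(c_v - \tilde P_v)^{p^{nk}}$, does, and the two values therefore coincide modulo $p$. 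Since both operations vanish on other graded components and are additive mod $p$, this equality on the single graded component extends to all of $K(n)^*$, giving the desired identity of operations.
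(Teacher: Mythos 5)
Your argument is correct and follows essentially the same route as the paper's proof given directly after the corollary: both use Cor.~\ref{cr_chern_exist} together with Lemma~\ref{lm_add_powers} (parts 2 and 2p) to write $\phi_v = c_v - P_v$ and $\phi_i = p^k(c_i - P_i)$, then restrict to the common graded component $K(n)^{v \bmod (p^n-1)}$ so that $P_i, P_v$ may be replaced by their specializations $\tilde P_i, \tilde P_v$, and invoke the proportionality from part 2p. You simply spell out more explicitly than the paper why the restriction to the single graded component determines the operation (additivity mod $p$ plus vanishing on the other components), and add the observation that $(\phi_v)^{p^{nk}}\bmod p$ is additive via Frobenius; note the paper also records a later unconditional proof through Cor.~\ref{grad_modp}, but the one you have reproduced is the argument the paper gives at the corollary itself.
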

\begin{proof}
If $k=0$ the claim is trivial, so assume $k>0$. 

It is clear that to prove the Corollary we may choose any generators
$\phi_i$, $\phi_v$. In accordance with
Cor. \ref{cr_chern_exist} and using Lemma \ref{lm_add_powers}
we choose them to satisfy the following equalities $\phi_i = p^{k}c_i - p^{k}P_i$,
$\phi_v = c_v - P_v$. 

As additive operations are supported on one component only,
which in this case is $K(n)^v$,
to compare them we may restrict operations to it.
Thus, we get $\phi_v = c_v-\tilde{P}_v$, $\phi_i = p^kc_i-p^k\tilde{P}_i$.

The comparison between $(\phi_v)^{p^{nk}}$ and $\phi_i$ modulo $p$
now follows from part 2p of Lemma \ref{lm_add_powers}.
\end{proof}

\subsection{Construction of Chern classes}\label{chern_constr}

In this section we reduce the existence of operations $c_i:K(n)^*\rarr CH^i\ot\Zp$
satisfying conditions i), ii) of Theorem \ref{main} to Lemma \ref{lift_add},
 which is proved in the next section. 
 
For now let us provide another point of view on 
the Cartan's formula as stated in ii) of Theorem \ref{main}.
We may look at each graded component of $c_{tot}(x+y)$ separately: 
in degree $i$ the derivative of $c_i$ is expressed as a polynomial in external
products of Chern classes of smaller degree. 
For example, for $i=p^n$ we get 
$\partial^1c_{p^n}(x,y):=c_{p^n}(x+y)-c_{p^n}(x)-c_{p^n}(y)=-\frac{a_1}{p}\sum_{j=1}^{p^n-1} \binom{p^n}{i} (c_1(x))^i(c_1(y))^{p^n-i}$.
As the derivative of an operation defines
 it uniquely up to an additive operation, this gives a way of an inductive construction of Chern classes.

It is rather easy to integrate the derivative of the $i$-th Chern class 
as $\QQ$-polynomial $P_i$ in $c_1,\ldots, c_{i-1}$ (notation is consistent with Section \ref{sec_cor}).
For many values of $i$ this polynomial is not integral (Lemma \ref{lm_add_powers}),
and to construct $c_i$ one needs to find an additive operation s.t. the sum of $P_i$ with it
will be integral (cf. Cor. \ref{cr_chern_exist}).
For example, $-\frac{a_1}{p}(c_1)^{p^n}$ has the same derivative
 as the derivative of $c_{p^n}$ predicted
by the Cartan's formula. 
To prove the existence of an integral operation $c_{p^n}$
 we need to find an additive operation $\psi_{p^n}$ from $K(n)^*$ to $CH^{p^n}\ot\QQ$
s.t. $-\frac{a_1}{p}(c_1)^{p^n}+\psi_{p^n}$ is an integral operation.

\begin{proof}[Proof of Theorem \ref{main}]
We construct operations $c_i$, satisfying i) and ii) by induction on $i$
s.t. they satisfy the following property:
\begin{itemize}
\item[{\it iiibis)}] a generator of integral additive operations $\phi_i$ is expressible as an integral polynomial in
$c_1,\ldots, c_i$.
\end{itemize}

Recall that by Prop. \ref{add_rank} for any $j\ge 1$ 
the space $[K(n)^*,CH^j\ot\Zp]^{add}$ of additive operations is a free module over $\Zp$ of rank 1.

{\bf Base of induction.} 
For $1\le i\le p^n-1$ choose $c_i$ to be {\sl  any} generator of $[K(n)^*,CH^j\ot\Zp]^{add}$.

By Prop.\ref{add_grad} the condition i) is satisfied.
As there are no terms in $F_{K(n)}$ of degree bigger than 1 and less than $p^n$
the Cartan's formula can be reformulated as additivity of operations $c_i$ for $1\le i\le p^n-1$. 
Therefore  ii) is satisfied.
The property iiibis) is obviously true.

{\bf Induction step.}
Let $i> p^n-1$. Assume that operations $c_1,c_2,\ldots c_{i-1}$, satisfying  i) and ii), are defined.
In particular, this means that we can calculate derivatives of polynomials in Chern classes $c_j$, $j<i$,
as polynomials in these Chern classes $c_j$, $j<i$.

Define the rational polynomial $P_i\in \QQ[c_1,\ldots,c_{i-1}]$ via the following formula:
$(\log_{K(n)} c_{tot})_i = c_i - P_i$. 
Here $(\log_{K(n)} c_{tot})_i$ is the $i$-th graded component of the series $\log_{K(n)} (c_1+c_2+\ldots)$.

The derivative of $c_i$ as predicted by Cartan's formula
is equal to the derivative of $P_i$ as a polynomial in Chern classes.
Indeed, $(\log_{K(n)} c_{tot})_i$ is predicted to be an additive operation, 
thus $\partial^1((\log_{K(n)} c_{tot})_i)=0=\partial^1(c_i-P_i)$.
This computation is purely algebraic and does not depend on 
any assumptions about operations $c_j$ except for the Cartan's formula.

Therefore if we define $c_i$ as a sum of $P_i$ and some additive operation,
 the condition ii) will be satisfied (up to degree $i$).
 
\begin{Lm}\label{chern_exist}[cf. Cor. \ref{cr_chern_exist}]
Let $\mu_i = \max(0, -\nu_p(P_i))$. 

Then there exist a generator $\phi_i$ of additive operations $[K(n)^*, CH^i\ot\Zp]^{add}$,
s.t. operation $c_i:K(n)^*\rarr CH^i\ot\QQ$ defined by the formula
$c_i = P_i(c_1,\cdots,c_{i-1})+\frac{\phi_i}{p^{\mu_i}}$
acts integrally on products of projective spaces.
\end{Lm}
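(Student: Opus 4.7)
By the COT and the continuity of operations (Section \ref{sec:cont}), integrality of $c_i$ on products of projective spaces is determined by its values on each ${K(n)^*((\mathbb{P}^\infty)^{\times r}) = \Zp[[z_1,\ldots,z_r]]}$. If $\mu_i = 0$, i.e.\ $P_i$ is itself integral, then any generator $\phi_i$ of $[\tilde K(n)^*, CH^i\otimes\Zp]^{add}$ already does the job, so we may assume $\mu_i > 0$. By Lemma \ref{lm_add_powers}(2p) this means $\mu_i = k$ with $i = p^{nk} v$, $p^n\nmid v$, and after specialising $c_s = 0$ for $s\not\equiv i \pmod{p^n-1}$ the integral polynomial $p^{\mu_i}\tilde P_i$ becomes, modulo $p$, a non-zero scalar multiple of $(c_v - \tilde P_v)^{p^{nk}}$.

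The first step is to recognise the reduction $p^{\mu_i} P_i \bmod p$ as an \emph{additive} operation $K(n)^*\to CH^i/p$. By the inductive hypothesis (\textit{iiibis}) applied to $v<i$, we have $c_v - P_v = \phi_v / p^{\mu_v}$, where $\phi_v$ is an integral polynomial in the previously constructed Chern classes that generates $[\tilde K(n)^*, CH^v\otimes\Zp]^{add}$. Because the $p^{nk}$-th power map is additive in any $\F{p}$-algebra (Frobenius), $\phi_v^{p^{nk}} \bmod p$ is additive, and by Lemma \ref{lm_add_powers}(1) combined with (2p) the same is true of $p^{\mu_i} P_i \bmod p$: the support on $K(n)^{i \bmod (p^n-1)}$ follows from (1), while additivity on that component is given by the specialised identity of (2p).

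The second step invokes Lemma \ref{lift_add} from Section \ref{op_modp} (still to be proved) to obtain an integral additive operation $\phi_i : K(n)^* \to CH^i \otimes \Zp$ that is a generator of its rank-one $\Zp$-module and satisfies $\phi_i \equiv -p^{\mu_i} P_i \pmod{p^{\mu_i}}$ on every product of projective spaces, after scaling by a suitable unit in $\F{p}^\times$ to match the leading mod-$p$ coefficient. Then $p^{\mu_i} P_i + \phi_i$ is divisible by $p^{\mu_i}$ in the coefficient ring, so $c_i := P_i + \phi_i/p^{\mu_i}$ lies in $\Zp[[z_1,\ldots,z_r]]$ for every $r$, giving the required integrality.

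The hard part will be Lemma \ref{lift_add} itself: as foreshadowed in the Remark after Theorem \ref{main}, many additive operations to $CH^*/p$ do \emph{not} lift to $CH^*\otimes\Zp$, so one must identify a sufficient liftability criterion and verify that Frobenius powers of integral additive generators meet it. A secondary subtlety is upgrading a bare mod-$p$ lift to a congruence modulo $p^{\mu_i}$; I expect this to be built into the statement of Lemma \ref{lift_add} (phrased for all prime-power congruences at once), or else handled by iteratively correcting by the additive operations already available in codimensions $< i$.
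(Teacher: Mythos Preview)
Your overall strategy matches the paper's: reduce Lemma~\ref{chern_exist} to Lemma~\ref{lift_add} and then iterate. Two points need sharpening.

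\textbf{Step 1.} Your justification that $p^{\mu_i}P_i\bmod p$ is additive is incomplete. Part~(1) of Lemma~\ref{lm_add_powers} gives vanishing on each single ``wrong'' component $K(n)^j$, and part~(2p) gives additivity on $K(n)^i$, but neither handles cross-terms $P_i(x_1+x_2)$ with $x_1,x_2$ in different graded pieces; for a non-additive operation, vanishing on each wrong component separately does not force it to depend only on the $i$-component. The paper's argument is cleaner and avoids this: since $\partial^1 P_i=\partial^1 c_i$ as integral polynomials in $c_1,\ldots,c_{i-1}$ by the Cartan formula, one has $\partial^1(p^{\mu_i}P_i)=p^{\mu_i}\cdot(\text{integral polynomial})\equiv 0\pmod p$ whenever $\mu_i>0$, giving full additivity at once. (In any case, note that Lemma~\ref{lift_add} as the paper states it does not take additivity as a hypothesis; it is a conclusion.)

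\textbf{Step 2.} Lemma~\ref{lift_add} only produces a congruence modulo~$p$, not modulo $p^{\mu_i}$, so the iteration is essential and should be written out. The paper proves by finite induction on $r=0,1,\ldots,\mu_i$ that there exist $\alpha_r\in\Zp^\times$ with $p^{\mu_i-r}P_i+\frac{\alpha_r}{p^r}\phi_i$ integral: applying Lemma~\ref{lift_add} to this integral operation (with $\mu=\mu_i-r>0$) gives $\equiv b\phi_i\pmod p$, whence $p^{\mu_i-r-1}P_i+\frac{\alpha_r-bp^r}{p^{r+1}}\phi_i$ is integral, and one sets $\alpha_{r+1}=\alpha_r-bp^r$. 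All corrections are by multiples of $\phi_i$ in codimension~$i$; your suggestion of correcting by additive operations ``in codimensions~$<i$'' cannot work, since those land in the wrong graded piece of $CH^*$. The step $r=0\to 1$ is special: one needs $p^{\mu_i}P_i\not\equiv 0\pmod p$ to ensure $\alpha_1\in\Zp^\times$, and this is exactly where part~(2p) and property~(\textit{iiibis}) for $v<i$ are used (your Step~1 observation that $p^{\mu_i}\tilde P_i$ is a nonzero multiple of $(c_v-\tilde P_v)^{p^{nk}}\bmod p$ is precisely the right input here).
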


The existence of operation $c_i:K(n)^*\rarr CH^i\ot\Zp$
satisfying conditions i), ii), iiibis) follows from this Lemma.
Indeed, the COT yields existence of integral operation $c_i$,
s.t. $c_i\ot id_\QQ$ (which is loosely denoted by $c_i$ in the Lemma)
 satisfies the formula in Lemma \ref{chern_exist}.
The Cartan's formula will be true for integral $c_i$, 
since it is an equality between two operations
which can be checked on products of projective spaces. 
As there is no torsion in values of our theories on products of projective spaces,
the statement can be checked rationally where it is true by the formula defining $c_i$.

Let us show now that the condition i) is satisfied.
Choose $j\neq i \mod{p^n-1}$.
By Lemma \ref{lm_add_powers} the specialization $\tilde{P}_i$ of the polynomial $P_i$ 
at $c_s=0$, for $s\neq j \mod(p^n-1)$ is equal to zero.
By the induction assumption this is what happens to the classes
$c_s$ as above for $s<i$ when restricted to $K(n)^j$.
By Prop. \ref{add_grad} the additive operation $\frac{\phi_i}{p^{\mu_i}}$ is supported on $K(n)^i$.
Therefore by the formula defining $c_i$ this operation is zero when restricted to $K(n)^j$
by the COT.

The condition iiibis) is satisfied by the choice of $\mu_i$ as explained in Section \ref{sec_cor}.

We finish this section by reducing Lemma \ref{chern_exist} to a result 
on additive operations $[K(n)^*,CH^*/p]^{add}$, Lemma \ref{lift_add},
which is proved in the next section. In fact, much stronger version of it will be proved
saying that the reduction modulo $p$ of {\it many} integral operations, whenever it is additive,
 is proportional to the reduction of an integral additive operation.

\begin{Lm}\label{lift_add}
Suppose that for some $\mu >0$, $a\in\QQ$ operation $p^{\mu}P_i+a\phi_i$,
where $\phi_i$ is a generator of integral additive operations $[K(n),CH^i\ot\Zp]^{add}$,
acts integrally on products of projective spaces and, thus, (by the COT)
defines an integral operation $\pi$.

Then $\pi$ is proportional to $\phi_i$ modulo $p$.
In particular, operation $\pi$ is additive modulo $p$.
\end{Lm}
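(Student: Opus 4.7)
The plan is to reduce the lemma to two independent assertions: (i) that the reduction $\pi \mod p$ is an \emph{additive} operation $K(n)^* \to CH^i/p$, and (ii) the anticipated structural result of Section \ref{op_modp}, which guarantees that an additive mod-$p$ operation arising as the reduction of an integral operation must in fact be proportional to the reduction of an integral additive operation.

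For (i), I would compute the derivative $\partial^1 \pi$ directly. Since $\phi_i$ is additive, $\partial^1 \phi_i = 0$, and therefore
$$\partial^1 \pi = p^\mu \partial^1 P_i.$$
The crucial point is that $\partial^1 P_i$ is \emph{integral} as a polynomial in the previously constructed Chern classes $c_1, \ldots, c_{i-1}$. Indeed, $P_i = c_i - (\log_{K(n)} c_{tot})_i$, and applying $\log_{K(n)}$ to the Cartan formula $c_{tot}(x+y) = F_{K(n)}(c_{tot}(x), c_{tot}(y))$ linearises the right-hand side, so each graded piece of $\log_{K(n)} c_{tot}$ is additive in the argument; thus $\partial^1 P_i = \partial^1 c_i$, which by the Cartan formula itself is an integral polynomial in $c_1, \ldots, c_{i-1}$ with coefficients in $\Zp$ pulled back from the coefficients of $F_{K(n)}$. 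Since $\mu > 0$, multiplication by $p^\mu$ kills this modulo $p$, and we conclude $\partial^1 \pi \equiv 0 \mod p$, i.e.\ $\pi \mod p$ is additive.

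For (ii), one invokes the forthcoming analysis of Section \ref{op_modp}: the $\F{p}$-line of additive mod-$p$ operations $K(n)^* \to CH^i/p$ that are reductions of (possibly non-additive) integral operations coincides with the $\F{p}$-line spanned by $\phi_i \mod p$. Combined with step (i), this immediately yields $\pi \equiv \lambda \phi_i \mod p$ for some $\lambda \in \F{p}$.

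The main obstacle is clearly step (ii). As the paper explicitly notes, $[K(n)^*, CH^*/p]^{add}$ is much larger than the image of $[K(n)^*, CH^* \ot \Zp]^{add}$ modulo $p$, so one cannot conclude by a naive rank count. Instead, one must isolate precisely the liftability condition --- presumably via a careful inspection of the system of equations in the CAOT reduced modulo $p$, together with an obstruction argument showing that the extra additive mod-$p$ operations fail to extend to any integral (a priori non-additive) operations. This is exactly the technical content that the author defers to the next section.
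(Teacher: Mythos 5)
Your step (i) is exactly the paper's argument for additivity mod $p$: $\partial^1 \pi = p^\mu \partial^1 P_i$, the right-hand side is an integral polynomial in $c_1,\dots,c_{i-1}$ by the Cartan formula, and $\mu > 0$ kills it modulo $p$.

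Your step (ii), however, misidentifies what Section~\ref{op_modp} actually proves, and as stated it is at risk of circularity. You propose to invoke the claim that ``the $\F{p}$-line of additive mod-$p$ operations that are reductions of integral operations coincides with the line spanned by $\phi_i \bmod p$.'' But the paper does not prove this directly; indeed the author remarks that this characterization of liftable additive operations follows only \emph{after} the proof of part~iii) of Theorem~\ref{main} --- precisely the statement whose proof depends on Lemma~\ref{lift_add}. Invoking it here would run in a circle. The missing key idea is the notion of a \emph{gradable} operation (Def.~\ref{def_grad}): an operation whose associated symmetric polynomials $G_l$ involve only monomials in which each $t_j$ appears to a power $\equiv 1 \pmod{p^n-1}$. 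The paper establishes two things: first, Prop.~\ref{chern_grad} shows by induction (using the shape of $\log_{K(n)}$ and Prop.~\ref{prop:add_grad}) that $P_i$ and $\phi_i$, hence $\pi$, are gradable --- a property one can check directly on the defining data, with no reference to liftability of $\pi$; second, Cor.~\ref{grad_modp} shows via the upper-triangular analysis of the CAOT system (Props.~\ref{upper_triangular}, \ref{deg_coef}, \ref{grad_coef}) that the $\F{p}$-space of \emph{gradable} additive operations to $CH^i/p$ is one-dimensional and spanned by $\phi_i \bmod p$. Your instinct to ``isolate the liftability condition'' by inspecting the CAOT equations points in the right direction, but the workable selection criterion is gradability, not liftability; without it the argument does not close.
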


As a matter of fact let us show first that if $\pi$ is integral, then it is additive modulo $p$.
It is enough to show that its derivative is zero modulo $p$ as an integral polynomial in Chern classes.
Recall that the derivative of $p^\mu P_i$ equals to $p^\mu \partial^1c_i$ by construction.
Here $\partial^1c_i$ is a formal notation meaning an integral polynomial in Chern classes
$c_1,\ldots, c_{i-1}$ which is predicted by the Cartan's formula. 
Poly-operation $p^\mu \partial^1c_i$ is equal to zero modulo $p$, since $\mu>0$.
The derivative of $a\phi_i$ is zero as well.

\begin{proof}[Lemma \ref{chern_exist} follows from Lemma \ref{lift_add}.]

Fix any generator $\phi_i$ of integral additive operations $[K(n)^*,CH^i\ot\Zp]^{add}$.
Let us prove the following statement by (finite) induction on $r$ using Lemma \ref{lift_add}.

{\bf Claim.}
For $0\le r \le \mu_i$ there exist $\alpha_r\in \Zp^{\times}$
 s.t. $p^{\mu_i-r}P_i +\frac{\alpha_r}{p^r}\phi_i$ is an integral operation.

{\bf Base of induction ($r=0$).} 
Recall that $\mu_i$ is chosen so that $p^{\mu_i}P_i$ is integral,
and so if $r=0$, one may take $\alpha_0=1$.

{\bf Induction step.}
Suppose we have shown that $p^{\mu_i-r}P_i +\frac{\alpha_r}{p^r}\phi_i$ is an integral operation.

It follows from a discussion above that if $r<\mu_i$, 
then Lemma \ref{lift_add} is applicable to the operation $\pi$
defined by $p^{\mu_i-r}P_i +\frac{\alpha_r}{p^r}\phi_i$ modulo $p$.
Therefore the operation $p^{\mu_i-r}P_i +\frac{\alpha_r}{p^r}\phi_i$ 
equals to $b \phi_i$ modulo $p$ for some $b\in\Zp$.
The operation $p^{\mu_i-r}P_i+(\frac{\alpha_r}{p^r}-b)\phi_i$ is zero modulo $p$ and hence 
the operation $p^{\mu_i-(r+1)}P_i+\frac{\alpha_r-bp^r}{p^{r+1}}\phi_i$ is an integral operation.
 Define $\alpha_{r+1} = \alpha_r-bp^r$.

If $r>0$ then $\alpha_r-bp^r \in \Zp^{\times}$, since $\alpha_r\in \Zp^\times$, $b\in \Zp$,
 and the induction step is proved.

However, for the induction step with $r=0$ we need to show that $b\neq \alpha_0 \mod p$.

{\bf Additional details on the induction step $r=0 \rarr r=1$.} 

Note that this applies only if $p^n|i$, as otherwise by Lemma \ref{lm_add_powers} $\mu_i=0$
and the induction stops at the base.

As we want $\alpha_1$ not to be $p$-divisible, 
which is the same as $b \neq \alpha_0 \mod p$,
we need to show that 
the operation $p^{\mu_i}P_i$ should not be equal to zero modulo $p$.
To prove it use Lemma \ref{lm_add_powers}, 2p which says that 
$-p^{\mu_i}P_i$ is proportional to $(c_v-P_v)^{p^{nk}}$ modulo $p$. 
However, by induction assumption of the construction of Chern classes
 property iiibis) is satisfied for them, i.e.
$c_v-P_v$ is a generator of integral additive operations. 
Thus, it is not equal to zero modulo $p$,
as otherwise using the COT one could divide it by $p$ and yield another integral additive operation.
Powers of $\phi_v \mod p$ are not zero as well by the COT,
since the coefficient ring of the target theory has no zero divisors.
The induction step and the Claim are proven.

Now we can define $c_i :=  P_i +\frac{\alpha_{\mu_i}}{p^{\mu_i}}\phi_i$,
 which is proved to be an integral operation. 
 As $\alpha_{\mu_i} \in \Zp^{\times}$ the operation $\alpha_{\mu_i}\phi_i$
  is a generator of integral additive operations
 and Lemma \ref{chern_exist} is proved to follow from Lemma \ref{lift_add}.
\end{proof}

\renewcommand{\qedsymbol}{}
\end{proof}

\subsection{Additive operations to Chow groups modulo $p$.}\label{op_modp}

In this section we prove Lemma \ref{lift_add} as Corollary \ref{grad_modp} of the study of additive
operations $[K(n), CH^*/p]^{add}$. 
Though the proof of Prop. \ref{grad_coef} from which the needed Corollary follows
is independent of the induction construction of the previous section,
to avoid misunderstanding we try to escape using $i$ as a varible
where the construction of Chern classes is not involved.

The proof is based on a general discussion about the system of linear equations,
which defines additive operations according to the Classification of Operations Theorem.
Roughly speaking it goes as follows. This system is finite-dimensional
(for operations to a particular component of $CH^*/p$) and is upper-triangular when written 
in a naturally chosen basis.
Over the rings $\Zp$ and $\QQ$ the diagonal coefficients
 of this system are non-zero,
and therefore the space of solutions is 1-dimensional (cf. Prop. \ref{add_rank}).
However, over $\F{p}$ many of the diagonal coefficients 
are zero which leads to a higher dimension of the space of solutions
and to the existence of additive operations to $CH^*/p$
which are not liftable as additive operations to $CH^*\ot\Zp$.
It turns out that
 for a rather natural set of additive operations ({\sl gradable} operations,
  Definition \ref{def_grad})
 it is possible to use equations with zeros on the diagonal 
 to express all variables in terms of one of them.
 In other words, 
 one can transform this system into an upper-triangular one without zeros on the diagonal,
 but the choice of the new basis for this transformation is not so natural 
 in the coordinates we work with.
 Anyway this proves that the space of gradable additive operations to $CH^i/p$ is 1-dimensional 
 (Cor.\ref{grad_modp}) and it is easy to show 
 that it is generated by a reduction of an integral additive operation.
Lemma \ref{lift_add} then easily follows.

{\it Notation (cf. Section \ref{sec:CAOT}).} 
Let $A$ denote one of the rings $\Zp, \F{p}, \QQ$.
 If $\phi:\tilde{K}(n)^*\rarr CH^*\ot A$ is an operation, 
denote by $G_l=G_l(t_1,\ldots, t_l)\in A[t_1,\ldots, t_l]$ the value of $\phi$ on
$\prod_{i=1}^l c_1^{\tilde{K}(n)}(\mathcal{O}(1)_i)
:=z_1\cdots z_l\in \tilde{K}(n)(\prod (\mathbb{P}^\infty)^{\times l})$
expressed as a symmetric polynomial in $t_j=c_1^{CH}(\mathcal{O}(1)_j)$.
Here $\mathcal{O}(1)_j$ is the pullback of the canonical line bundle on the $j$-th component 
of the product of projective spaces.

\begin{Rk}\label{Gdiv}
From continuity of operations it follows that $G_l$ is divisible by $t_1\cdots t_l$.
\end{Rk}

\begin{Def}\label{def_grad}
An operation $\phi:\tilde{K}(n)^*\rarr CH^i\ot A$ is called gradable,
if for any $l\ge 1$ the symmetric polynomial $G_l(t_1,\ldots, t_l)$ admits only monomials 
where every variable has its power equal to 1 modulo $p^n-1$.
\end{Def}

The following is straight-forward.

\begin{Prop}\label{grad}

\begin{enumerate}
\item The sum of gradable operations is gradable;
\item product of $N$ gradable operations is gradable if $N\equiv 1 \mod {p^n-1}$;
\item an operation $\phi:K(n)^*\rarr CH^i\ot\Zp$ is gradable iff $\phi\ot id_\QQ: K(n)\rarr CH^i\ot\QQ$ is gradable.
\end{enumerate}
\end{Prop}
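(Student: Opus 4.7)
The plan is to verify each claim by direct inspection of the polynomial $G_l$ attached to the operation, using the pointwise definition of the ring structure on operations together with Remark \ref{Gdiv}.

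For (1), since the ring structure on $[\tilde{K}(n)^*,CH^*\otimes A]$ is induced pointwise from the abelian group structure of the target, the polynomial attached to $\phi_1+\phi_2$ on $z_1\cdots z_l$ is $G_l^{(1)}+G_l^{(2)}$. Any monomial appearing in this sum already appears in at least one summand, so the condition that every variable has exponent $\equiv 1 \pmod{p^n-1}$ in every monomial is preserved. For (2), the polynomial attached to $\phi_1\cdots\phi_N$ on $z_1\cdots z_l$ is the product $G_l^{(1)}\cdots G_l^{(N)}$; by Remark \ref{Gdiv} each factor is divisible by $t_1\cdots t_l$, so each monomial of each factor involves every variable $t_j$ with positive exponent, and by gradability this exponent is $\equiv 1 \pmod{p^n-1}$. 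Consequently each monomial of the product involves every $t_j$ with exponent congruent to $N$ modulo $p^n-1$, which yields a gradable polynomial precisely when $N\equiv 1 \pmod{p^n-1}$.

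For (3) I will use that $CH^*((\mathbb{P}^\infty)^{\times l})$ is a free abelian group for every $l$, so the base change map $CH^*\otimes\Zp \to CH^*\otimes\QQ$ on values over products of projective spaces is injective. Hence the polynomial $G_l$ attached to $\phi$ and the polynomial attached to $\phi\otimes\id_\QQ$ have exactly the same monomial support with nonzero coefficients, and the two gradability conditions are equivalent. The only step requiring any care is the bookkeeping on exponents in (2); the other two parts are immediate from the construction of $G_l$.
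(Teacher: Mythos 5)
Your proof is correct and matches the intended (but omitted) argument; the paper only says "The following is straight-forward" and gives no proof, so you have simply supplied the routine verification at the level of the $G_l$ polynomials, which is exactly how one should read it. Two small remarks worth noting: for part (2) the appeal to Remark~\ref{Gdiv} is in fact redundant once $p^n>2$, since the gradability condition $\deg_{t_j}\equiv 1\pmod{p^n-1}$ already forces each variable to appear with positive exponent (and when $p^n=2$ every operation is vacuously gradable); and for part (3), since the target is a fixed component $CH^i$, each $G_l$ is a homogeneous polynomial of degree $i$, so the injectivity of $\Zp[t_1,\ldots,t_l]\hookrightarrow\QQ[t_1,\ldots,t_l]$ is all that is needed, as you say.
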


\begin{Prop}\label{prop:add_grad}
All additive operations from $K(n)^*$ to $CH^*\ot\Zp$ are gradable.
\end{Prop}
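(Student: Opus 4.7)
The plan is to reduce to a rank-one module via Prop.~\ref{add_rank}, and then exhibit a gradable rational generator using the Chern character.

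First, by Remark~\ref{rem:add_grad} additive operations to $CH^*\otimes\Zp$ split according to the grading of $CH^*$, so it is enough to treat a single additive $\phi\colon K(n)^*\to CH^m\otimes\Zp$ for fixed $m\ge 1$ and show that $G_l(t_1,\ldots,t_l):=\phi(z_1\cdots z_l)$ is gradable for every $l\ge 1$. By Prop.~\ref{add_rank} this $\Zp$-module is free of rank $1$, and the argument there identifies its rationalization with a subspace of $[K(n)^*\otimes\QQ, CH^m\otimes\QQ]^{add}$. Since gradability of a polynomial is preserved under rational rescaling, it suffices to exhibit a gradable generator of the rational space of additive operations.

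Second, the candidate is $ch_m$, the $m$-th component of the Chern character $ch\colon K(n)^*\otimes\QQ\to CH^*\otimes\QQ$. This $ch$ is the unique stable multiplicative operation and, being a morphism of ring-valued presheaves, is additive; hence so is each $ch_m$. On $\mathbb{P}^\infty$ one has $ch(z)=\exp_{F_{K(n)}}(t)$, the compositional inverse of $\log_{F_{K(n)}}(t)=\sum_{i\ge 0} b_i\,t^{p^{ni}}$. Because $p^{ni}\equiv 1\pmod{p^n-1}$, this logarithm, and hence its inverse, contain only monomials of exponent $\equiv 1\pmod{p^n-1}$. Multiplicativity of $ch$ then gives
\[
ch(z_1\cdots z_l)=\prod_{j=1}^l ch(z_j)=\prod_{j=1}^l \sum_{r_j\ge 0} f_{r_j}\, t_j^{1+r_j(p^n-1)},
\]
so in every monomial of $ch_m(z_1\cdots z_l)$ each variable $t_j$ appears in an exponent congruent to $1$ modulo $p^n-1$; that is, $ch_m$ is gradable.

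Third, I would verify that $[K(n)^*\otimes\QQ,CH^m\otimes\QQ]^{add}$ is one-dimensional and generated by $ch_m$ by invoking the rational stable multiplicative isomorphism $K(n)^*\otimes\QQ\simeq K_0\otimes\QQ$ (both being rationally classified by the additive formal group law) and transporting the classification of additive operations from $\tilde K_0$ established in the proposition preceding Cor.~\ref{cr_k0_add}; nonvanishing of $ch_m$ on $z^m$ shows it is a nonzero generator. Any integral $\phi$ is then rationally of the form $\alpha\cdot ch_m$ for some $\alpha\in\QQ$, so $G_l^\phi=\alpha\cdot ch_m(z_1\cdots z_l)$ is a rational multiple of a gradable polynomial and therefore gradable.

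The main delicate point is the one-dimensionality of the rational additive operations and the identification of $ch_m$ as a generator; this requires either adapting the argument of Prop.~\ref{add_rank} to the rational additive setting or transporting it along the multiplicative isomorphism with $K_0\otimes\QQ$. The gradability of $ch_m$ itself, by contrast, is essentially immediate once one unwinds the congruence $p^{ni}\equiv 1\pmod{p^n-1}$ together with multiplicativity.
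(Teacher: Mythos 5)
Your proposal is correct and follows essentially the same route as the paper: reduce via the rank-one structure of $[\tilde{K}(n)^*, CH^m\ot\Zp]^{add}$ and Prop.~\ref{grad}(3) to the gradability of the Chern character, then use multiplicativity to reduce to $ch(z)=\exp_{F_{K(n)}}(t)$ and the fact that the compositional inverse of $\log_{K(n)}$ inherits the congruence condition on exponents. The only cosmetic difference is that the paper justifies this last step by a homogeneity argument over $\Zp[v_n,v_n^{-1}]$ with $\deg v_n=1-p^n$, whereas you state it directly; both are fine.
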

\begin{proof}
Any additive operation from $K(n)^*$ to $CH^j\ot\Zp$ 
 is rationally propotional to the component of the Chern character $ch_j$.
Due to Prop. \ref{grad}, (3), it is enough to prove that all components of the Chern character
are gradable.

Since the Chern character is multiplicative, we have $G_l(t_1,t_2,\ldots,t_l) =\prod_j G_1(t_j)$.
It is neccessary and sufficient for $ch$ to be gradable that 
 $\gamma(t):=G_1(t)$ admits only monomials in $t$ of the power equal to 1 modulo $p^n-1$.

The series $\gamma$ defines a morphism from the FGL $F_{K(n)}$ to the additive FGL.
Thus, by definition of the logarithm, $\gamma$ is the composition inverse of $\log_{K(n)}$.
One may consider the inverse to the homogenous series $x+v_n\frac{a_1x^{p^n}}{p}+\ldots$ 
over $\Zp[v_n]$, where $\deg v_n = 1-p^n$ and $\deg x =1$. 
Its inverse is homogenous as well. However, as $\log_{K(n)}$ can be obtained
from this series by setting $v_n=1$, $\gamma$ can be obtained from its inverse 
by the same procedure. Therefore, it is 'gradable', 
the operation $ch$ is gradable and the proposition is proved.
\end{proof}

\begin{Rk}
One can show that not all operations $K(n)^*\rarr CH^*\ot\Zp$ are gradable, e.g. $\phi_1^2$ is not. 

For $n>1$ not all additive operations $K(n)^*\rarr CH^*/p$ are gradable either,
 the easiest example being $\phi_1^p$.
\end{Rk}

\begin{Prop}\label{chern_grad}
Let $i\ge p^n$.
Assume that operations $c_1,\ldots, c_{i-1}:K(n)^*\rarr CH^*\ot\Zp$,
 satisfying i), ii) and iiibis) of Theorem \ref{main} exist.

Then these operations are gradable.

Denote by $P_j=-(\log_{K(n)} c_{tot})_j+c_j$, $j< i$, 
a rational polynomial in Chern classes $c_1,\ldots, c_{j-1}$.

Then $aP_j+b\phi_j$ defines a gradable operation for any $a,b\in \QQ$.
\end{Prop}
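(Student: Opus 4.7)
The plan is to establish the first claim---gradability of each $c_j$ for $j < i$---by induction on $j$, with the second claim following immediately. The two key ingredients are the multiplicative and additive closure properties of gradability from Proposition \ref{grad} together with the observation that $\log_{K(n)}(x) = x + \sum_{k \geq 1} b_k x^{p^{nk}}$ involves only exponents $p^{nk} \equiv 1 \pmod{p^n - 1}$, which is precisely the congruence needed by Proposition \ref{grad}(2). For the base case $1 \leq j \leq p^n - 1$, condition ii) forces $c_j$ to be additive: since $F_{K(n)}(u, v) \equiv u + v$ modulo monomials of total degree $\geq p^n$ (each Chern class contributes at least $1$ to the Chow degree), the degree-$j$ part of $c_{tot}(x + y) = F_{K(n)}(c_{tot}(x), c_{tot}(y))$ with $j < p^n$ reduces to $c_j(x + y) = c_j(x) + c_j(y)$. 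Since integral additive operations from $K(n)^*$ to Chow groups are gradable by Proposition \ref{prop:add_grad}, $c_j$ is gradable.

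For the induction step with $p^n \leq j < i$, I would first argue that $c_j - P_j$ is an additive operation after rationalization. By construction of $P_j$ the formal derivative of $P_j$ as a polynomial in $c_1, \ldots, c_{j-1}$ matches $\partial^1 c_j$ as predicted by the Cartan formula, hence $\partial^1(c_j - P_j) = 0$ as an operation and $c_j - P_j$ is additive. By Proposition \ref{add_rank} the space of such rational operations is one-dimensional, so $c_j = P_j + \alpha \phi_j$ for some $\alpha \in \QQ$. The crucial structural observation is
\[
P_j = -\sum_{k \geq 1} b_k \bigl(c_{tot}^{p^{nk}}\bigr)_j,
\]
so every monomial of $P_j$ is a product of exactly $p^{nk}$ Chern classes $c_{j_1} \cdots c_{j_{p^{nk}}}$ (for some $k \geq 1$) with each $j_m < j$. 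The induction hypothesis gives gradability of each factor, and since $p^{nk} \equiv 1 \pmod{p^n - 1}$, Proposition \ref{grad}(2) yields gradability of every such monomial, which then passes to $P_j$ by Proposition \ref{grad}(1). Combining with gradability of $\phi_j$ closes the induction.

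The second claim is then immediate: $P_j$ is gradable by the argument just given and $\phi_j$ is gradable by Proposition \ref{prop:add_grad}, so any $\QQ$-linear combination $aP_j + b\phi_j$ is gradable because scaling preserves gradability trivially and sums are handled by Proposition \ref{grad}(1). I do not foresee any serious obstacle; the argument is essentially a bookkeeping exercise matching the exponents $p^{nk}$ appearing in $\log_{K(n)}$ to the multiplicative gradability criterion of Proposition \ref{grad}(2). The only minor point requiring care at the induction step is confirming that condition ii), together with the fact that $\partial^1$ annihilates precisely the additive operations, determines $c_j$ up to a rational additive operation---but this is straightforward from Proposition \ref{add_rank}.
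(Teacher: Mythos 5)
Your proposal is correct and follows essentially the same inductive argument as the paper: base case via additivity of $c_1,\ldots,c_{p^n-1}$ and Prop.~\ref{prop:add_grad}, induction step by writing $c_j = P_j + \alpha\phi_j$ and observing that every monomial of $P_j$ is a product of $p^{nk}$ ($k\ge 1$) Chern classes of lower index, so Prop.~\ref{grad}(2) applies. The only cosmetic difference is that you spell out why the base-case $c_j$ are additive (the Cartan formula in degree $<p^n$ reduces to additivity since $F_{K(n)}\equiv x+y$ below degree $p^n$), whereas the paper simply cites the construction; both are fine.
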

\begin{proof}
{\bf Base of induction.} 
Operations $c_1, c_2, \ldots, c_{p^n-1}$ are additive 
and are proved to be gradable in Prop. \ref{prop:add_grad}.

{\bf Induction step.}
The operation defined by the polynomial $P_j$ is gradable
as follows from the gradability of $c_1, \ldots, c_{j-1}$ and Prop. \ref{grad}, (2).
Indeed, any monomial of $\log_{K(n)} c_{tot}$ is a product of $p^{kn}$ Chern classes for $k\ge 0$
and $p^{kn}\equiv 1 \mod {p^n-1}$.

Recall from Section \ref{chern_constr} that rationally $c_j$ is equal to a sum of a multiple of $P_j$ 
and a rational additive operation $\psi_j$. 
The operation $P_j$ is gradable by induction and $\psi_j$ is gradable as an additive operation.
By Prop. \ref{grad} the induction step is proved.

Operation $aP_j+b\phi_j$ is a sum of gradable operations and therefore gradable.
\end{proof}

Now we rewrite the CAOT for the case $A^*=K(n)^*$, $B^*=CH^m/p$.

\begin{Th}
Additive operations $[\tilde{K}(n)^*, CH^m/p]^{add}$ are in 1-to-1 correspondence with 
the set of symmetric polynomials $G_l\in \F{p}[t_1,\ldots, t_l]\cdot (t_1\cdots t_l)$ 
(cf. Remark \ref{Gdiv})
 of degree $m$ for $l\ge 1$ which satisfy the following equations:

$$ G_l (t_1, t_2, \ldots, t_{l-1}, u+v) 
- \sum_{j,k\ge 0} a_{jk} G_{j+k+l-1}(t_1, \ldots, t_{l-1},u^{\times j}, v^{\times k})  = 0, \eqno (A_l)$$
where $a_{jk}$ are the coefficients of the formal group law of $K(n)$:
$F_{K(n)}(x,y)=\sum_{j,k} a_{jk} x^jy^k$.
\end{Th}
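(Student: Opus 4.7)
The plan is to derive this statement as a direct specialization of the Algebraic Classification of Additive Operations Theorem (CAOT) to $A^* = K(n)^*$ and $B^* = CH^*/p$. Since $K(n)^*$ is a free theory (cf. Section \ref{subsec_mor}) it is of rational type by Vishik's theorem, so the CAOT applies and yields a bijection between additive operations from $K(n)^*$ to $CH^*/p$ and families $G_l \in \mathrm{Hom}_{Ab}(\Zp, \F{p}[[t_1,\ldots,t_l]])$ for $l \ge 0$ satisfying the three properties i)--iii) stated there.

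First I would simplify the Hom-group. The target $\F{p}[[t_1,\ldots,t_l]]$ is annihilated by $p$, so any abelian group homomorphism $\Zp \to \F{p}[[t_1,\ldots,t_l]]$ kills $p\Zp$ and factors uniquely through $\F{p} = \Zp/p\Zp$; it is therefore determined by its image on $1 \in \Zp$. This lets me replace the homomorphism $G_l$ by a single element $G_l(1) \in \F{p}[[t_1,\ldots,t_l]]$, which I again denote $G_l$ for brevity. To cut down from $CH^*/p$ to the fixed graded component $CH^m/p$, I would invoke Remark \ref{rem:add_grad}, which forces $G_l$ to be homogeneous of degree $m$; in particular $G_l$ is a polynomial rather than a formal series. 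Restricting the source from $K(n)^*$ to $\tilde{K}(n)^*$ kills the datum at $l = 0$, leaving only $l \ge 1$.

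It then remains to transcribe conditions i)--iii) of the CAOT. Condition i) is exactly divisibility of $G_l$ by $t_1 \cdots t_l$, and condition ii) is symmetry. For iii) I would plug in $F_B(x,y) = x+y$ on the left, since the formal group law of Chow groups is additive, and $F_A(x,y) = F_{K(n)}(x,y) = \sum_{j,k} a_{jk} x^j y^k$ on the right. Taking $\alpha = 1 \in \Zp$ and using $G_l(\alpha a_{jk}) = \bar{a}_{jk}\, G_l(1)$, where $\bar{a}_{jk}$ is the reduction of $a_{jk}$ modulo $p$, condition iii) becomes exactly $(A_l)$, with the coefficients of $F_{K(n)}$ understood as their reductions in $\F{p}$.

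I do not expect any conceptual obstacle: the entire argument is an unpacking of the CAOT together with the elementary observation about $p$-torsion Hom-groups. The only point requiring minor care is the edge case $l = 1$ of equation $(A_l)$, where the sum formally involves $G_0$ when $j = k = 0$; this term vanishes automatically because $a_{00} = 0$ for any formal group law, so no additional hypothesis $G_0 = 0$ needs to be imposed beyond what the restriction to $\tilde{K}(n)^*$ already provides.
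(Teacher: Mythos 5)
Your proposal is correct and follows the same route the paper takes: the theorem is presented there simply as a rewriting of the CAOT for $A^*=K(n)^*$, $B^*=CH^m/p$, with the identification of $\mathrm{Hom}_{Ab}(\Zp,\F{p}[[t_1,\ldots,t_l]])$ with a single element and Remark \ref{rem:add_grad} handling the grading, exactly as you spell out. Your observation about the $l=1$ edge case and $a_{00}=0$ is also consistent with the paper's remark that $G_0=0$ once the source is restricted to $\tilde{K}(n)^*$.
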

\begin{Rk}
If we investigated operations from $K(n)^*$, and not from $\tilde{K}(n)^*$, 
then there would be also a polynomial $G_0$ defining an operation. It is equal to zero in our case.
\end{Rk}

Let $\vec{r}=(r_1,\ldots, r_k)$ be a partition of $m$.
We do not imply in our notations that numbers $r_j$ are ordered. 
So, $(1,2,3)$ and $(2,1,3)$ are both acceptable notations of the same partition.
Denote by $\alpha^{(s)}_{(r_1,\ldots,r_k)}$ or $\alpha^{(k)}_{\vec{r}}$
 a coefficient of any monomial $t_{j_1}^{r_1}\cdots t_{j_k}^{r_k}$ 
in the symmetric polynomial $G_k$ where $\{j_1,\ldots,j_k\}=\{1,2,\ldots,k\}$.
 We will call these coefficients {\sl variables} as we consider
them unknown in equations $A_l$.
The system of equations $(A_l)$ is a finite-dimensional 
linear homogenous system on $\alpha^{(k)}_{(r_1,\ldots,r_k)}$.

\begin{Prop}\label{upper_triangular}
Let the variables $\{\alpha^{(k)}_{\vec{s}}\}$ 
parameterized by all $\vec{s}$ -- $k$-partitions of $m$
  satisfy the system $(A_l)$.
  
 Let $a,b, r_1,\ldots, r_{l-1}>0$ be such that
  $\vec{r}:=(r_1,\ldots,r_{l-1},a+b)$ is a partition of $m$.

Then the coefficient of the monomial $t_1^{r_1}\cdots t_{l-1}^{r_{l-1}} u^a v^b$ in the equation $A_l$ 
lies in $\F{p}\alpha^{(l)}_{\vec{r}}+\oplus_{k>l} \F{p}\alpha^{(k)}_{(s_1,\ldots,s_k)}$.

Moreover, this coefficient lies in $\oplus_{k>l} \F{p}\alpha^{(k)}_{(s_1,\ldots,s_k)}$
if and only if the partition $\vec{r}$ is $p$-special, i.e. $r_q = p^{w_q}$.

In particular, if $\vec{r}$ is not $p$-special,
then $\alpha^{(l)}_{\vec{r}}$ is expressible in terms of $\alpha^{(k)}_{\vec{s}}$ for $k>l$. 
\end{Prop}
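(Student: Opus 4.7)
The plan is to expand both sides of equation $(A_l)$ and read off the coefficient of $t_1^{r_1}\cdots t_{l-1}^{r_{l-1}} u^a v^b$, tracking which $\alpha^{(k')}_{\vec{s}}$-variables can possibly appear. For the left-hand side $G_l(t_1,\ldots,t_{l-1},u+v)$, symmetry of $G_l$ forces the coefficient of any monomial $t_1^{r_1}\cdots t_{l-1}^{r_{l-1}} t_l^{s}$ in $G_l$ to equal $\alpha^{(l)}_{(r_1,\ldots,r_{l-1},s)}$; after the substitution $t_l=u+v$, only $s=a+b$ can produce a $u^a v^b$ term, and it contributes exactly $\binom{a+b}{a}\alpha^{(l)}_{\vec{r}}$.

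For the right-hand side I would go through the sum $\sum_{j,k\ge 0} a_{jk} G_{l+j+k-1}(t_1,\ldots,t_{l-1},u^{\times j},v^{\times k})$ case by case. The term $(j,k)=(0,0)$ vanishes because $a_{00}=0$ (the FGL has no constant term). The terms $(1,0)$ and $(0,1)$ give $G_l(t_1,\ldots,t_{l-1},u)$ and $G_l(t_1,\ldots,t_{l-1},v)$ respectively (since $a_{10}=a_{01}=1$), and neither polynomial contains monomials with both $u$ and $v$ at positive exponents, so they contribute $0$ to the coefficient at hand. For every $(j,k)$ with $j+k\ge 2$, the polynomial $G_{l+j+k-1}$ involves $l+j+k-1\ge l+1$ variables, so each of its monomial coefficients is one of the variables $\alpha^{(k')}_{\vec{s}}$ with $k'=l+j+k-1>l$; thus the substitution produces an $\F{p}$-linear combination of such higher variables. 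Combining, the coefficient of $t_1^{r_1}\cdots t_{l-1}^{r_{l-1}} u^a v^b$ in $(A_l)$ is precisely $\binom{a+b}{a}\alpha^{(l)}_{\vec{r}}$ minus an $\F{p}$-linear combination of $\alpha^{(k)}_{\vec{s}}$ with $k>l$, which gives the first assertion.

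For the ``moreover'' and ``in particular'' parts, the coefficient lies purely in $\bigoplus_{k>l}\F{p}\alpha^{(k)}_{\vec{s}}$ iff the scalar $\binom{a+b}{a}$ vanishes modulo $p$. By Kummer's theorem this occurs iff adding $a$ and $b$ in base $p$ produces a carry, and for \emph{every} splitting of a fixed sum $r_l=a+b$ with $a,b>0$ this holds iff $r_l$ is a power of $p$. Since $\alpha^{(l)}_{\vec{r}}$ depends only on the multiset of parts, symmetry of $G_l$ allows us to permute any chosen part of $\vec{r}$ into the last slot before splitting; varying both the choice of part and the splitting $(a,b)$ shows that the coefficient fails to involve $\alpha^{(l)}_{\vec{r}}$ precisely when every part $r_q$ is a power of $p$, i.e.\ when $\vec{r}$ is $p$-special. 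Conversely, if $\vec{r}$ is not $p$-special, pick $r_j$ not a $p$-power, permute it into position $l$, and choose $(a,b)$ with $\binom{r_j}{a}\not\equiv 0\pmod p$ (for instance with $a$ equal to the lowest nonzero base-$p$ digit of $r_j$ times its $p$-power); the resulting equation expresses $\alpha^{(l)}_{\vec{r}}$ in terms of $\alpha^{(k)}_{\vec{s}}$ with $k>l$. The only real obstacle is the bookkeeping in the RHS analysis: one must make sure that no term with $j+k\ge 2$ accidentally contributes to the $\alpha^{(l)}_{\vec{r}}$-coefficient, and this is settled purely by counting arguments of $G_{l+j+k-1}$.
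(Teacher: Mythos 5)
Your proof is correct and follows essentially the same route as the paper: expand both sides of $(A_l)$, observe that the only occurrence of an $\alpha^{(l)}$-variable in the coefficient of $t_1^{r_1}\cdots t_{l-1}^{r_{l-1}}u^av^b$ comes from $G_l(\ldots,u+v)-G_l(\ldots,u)-G_l(\ldots,v)$ and equals $\binom{a+b}{a}\alpha^{(l)}_{\vec r}$, while all $j+k\ge 2$ terms in the sum only involve $\alpha^{(k')}$ with $k'=l+j+k-1>l$. Your Kummer/carry observation and the remark that one must vary both the splitting $(a,b)$ and which part of the partition occupies the last slot make precise the quantification that the paper's phrase ``monomials $\ldots$ are not additive over $\F{p}$'' leaves implicit, but the substance is identical.
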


\begin{proof}
The variables $\alpha^{(k)}_{\vec{s}}$ with $k<l$ do not appear in the equation $A_l$.
Thus, it is enough to show that the variables $\alpha^{(l)}_{\vec{s}}$ do not appear 
as coefficients of the monomial $t_1^{r_1}\cdots t_{l-1}^{r_{l-1}}u^a v^b$ 
in the equation $A_l$ for $\vec{s}\neq \vec{r}$.

In fact these variables can appear only in the expression 
$G_l (t_1, t_2, \ldots, t_{l-1}, u+v)-G_l(t_1, t_2, \ldots, t_{l-1}, u)-G_l (t_1, t_2, \ldots, t_{l-1}, v)$
 of the equation $A_l$. That is we have to look at 
$\alpha^{(l)}_{(s_1,\ldots,s_l)} t_1^{s_1}\cdots t_{l-1}^{s_{l-1}} \left((u+v)^{s_l}-u^{s_l}-v^{s_l} \right)$, 
which obviously does not contain the monomial under investigation if
partitions $\vec{s}$ and $\vec{r}$ are different.

The variable $\alpha^{(l)}_{\vec{r}}$ appears with a non-zero coefficient
in the expression
 $$G_l (t_1, t_2, \ldots, t_{l-1}, u+v)-G_l(t_1, t_2, \ldots, t_{l-1}, u)-G_l (t_1, t_2, \ldots, t_{l-1}, v)$$
 whenever the corresponding monomials $t_{j_1}^{r_1}\cdots t_{j_l}^{r_l}$ 
 are not additive over $\F{p}$.  
 This condition exactly means that the partition $\vec{r}$ is not $p$-special.
\end{proof}

By the definition of a gradable operation variables $\alpha^{(k)}_{\vec{r}}$ corresponding to it
have to be zero whevener there exist $r_q\in \vec{r}$ s.t. $r_q \neq 1 \mod (p^n-1)$.
We now investigate when the variables $\alpha^{(l)}_{\vec{r}}$ corresponding
to $p$-special partitions can be non-zero for gradable operations.
Note, that these are precisely the variables
which are in charge of the possible non-liftable additive operations.

\begin{Prop}\label{deg_coef}
The number $p^r$ is equal to 1 modulo $p^n-1$ if and only if $r$ is divisible by $n$.

Therefore variables $\alpha^{(l)}_{\vec{r}}$ corresponding 
to $p$-special partition $\vec{r}$ are zero for gradable operation
whenever
there exists $r_q=p^{w_q}$ s.t. $w_q$ is not divisible by $n$.
\end{Prop}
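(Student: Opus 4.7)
The plan is to split the proposition into its two assertions: first the purely number-theoretic statement about the multiplicative order of $p$ modulo $p^n-1$, then a direct application to the gradability definition.

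For the first assertion, the ``if'' direction is immediate: write $r=nk$, so
$$p^r=(p^n)^k\equiv 1^k=1\pmod{p^n-1}.$$
For the ``only if'' direction, write $r = nk + s$ with $0\le s < n$ by division with remainder. Then $p^r = (p^n)^k \cdot p^s \equiv p^s \pmod{p^n-1}$, so the assumption $p^r\equiv 1$ reduces to $p^s\equiv 1\pmod{p^n-1}$. But for $0\le s<n$ we have $0\le p^s-1\le p^{n-1}-1<p^n-1$, and the only non-negative integer strictly less than $p^n-1$ that is divisible by $p^n-1$ is $0$. Hence $p^s=1$, i.e.\ $s=0$, so $n\mid r$. (The case $n=1$ is vacuous since then $p^n-1=p-1$ divides $p^s-1$ for every $s$, and every integer is divisible by $1$.)

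For the second assertion, I would simply unpack Definition \ref{def_grad}. The variable $\alpha^{(l)}_{\vec r}$ is by definition the coefficient in $G_l$ of a monomial of the form $t_{j_1}^{r_1}\cdots t_{j_l}^{r_l}$. If the operation is gradable, every such monomial that actually occurs must satisfy $r_q\equiv 1\pmod{p^n-1}$ for each $q$. When $\vec r$ is $p$-special, each $r_q=p^{w_q}$, so the gradability condition becomes $p^{w_q}\equiv 1\pmod{p^n-1}$ for every $q$. By the first assertion this is equivalent to $n\mid w_q$ for every $q$. Contrapositively, if some $w_q$ is not divisible by $n$, then no monomial with this exponent profile is allowed to appear in $G_l$, forcing $\alpha^{(l)}_{\vec r}=0$.

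There is no real obstacle here: the whole statement reduces to the elementary divisibility fact about $p^s-1$ versus $p^n-1$, and the second part is just a direct reading of the definition of gradability. The only thing to be slightly careful about is the corner case $n=1$, where the claim is degenerate but remains correct.
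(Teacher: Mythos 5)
Your proof is correct and follows essentially the same approach as the paper: reduce the exponent modulo $n$ and observe that $p^s-1 < p^n-1$ forces $s=0$, then read off the gradability condition directly from Definition \ref{def_grad}. The extra attention to the $n=1$ corner case is fine but not strictly needed.
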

\begin{proof}
From the equality $r=1+v(p^n-1)=p^m$ we get that $p^n-1|p^m-1$ and hence $n|m$
($p^m \equiv p^{m\mod n} \mod (p^n-1)$ and thus $m \equiv 0 \mod n$).
By definition of gradable operations the claim follows.
\end{proof}

We will call partitions $(p^{ns_1}, \ldots, p^{ns_l})$ -- $p^n$-special.

\begin{Prop}\label{grad_coef}
Let $\{\alpha^{(k)}_{(r_1,\ldots,r_k)}\}$ satisfy
the system of equations $(A_l)$ and let $\alpha^{(j)}_{\vec{r}}=0$
whenever there exists $r_q \neq 1 \mod (p^n-1)$, $1\le q\le l$.
In other words, $\{\alpha^{(k)}_{\vec{r}}\}$ correspond to a gradable additive operation.

Let  $\vec{r}:=(r_1,\ldots, r_l)$ be a $p^n$-special partition
s.t. there are at least $p^n$ equal numbers among $r_q$, $1\le q \le l$.

Then using equations $A_{l-p^n+1}$ and $A_l$ one may express
 $\alpha^{(l)}_{\vec{r}}$ in terms of $\alpha^{(k)}_{\vec{s}}$ for $k>l$.
\end{Prop}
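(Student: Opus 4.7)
The plan is to extract $\alpha^{(l)}_{\vec r}$ from a single linear relation obtained by reading off one carefully chosen monomial coefficient in equation $A_{l-p^n+1}$, and to clear out intermediate arity-$l$ variables produced along the way by invoking Prop.~\ref{upper_triangular} (which is the content of equation $A_l$).

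Concretely, I would reorder $\vec r$ so that $r_{l-p^n+1}=\cdots=r_l=p^{nw}$ for some $w\ge 0$, and in $A_{l-p^n+1}$ extract the coefficient of
\[
M \;:=\; t_1^{r_1}\cdots t_{l-p^n}^{r_{l-p^n}}\, u^{p^{nw+n-1}}\, v^{(p-1)p^{nw+n-1}}.
\]
On the LHS this coefficient is $\alpha^{(l-p^n+1)}_{(r_1,\ldots,r_{l-p^n},\,p^{n(w+1)})}\binom{p^{n(w+1)}}{p^{nw+n-1}}$, which vanishes modulo $p$ by Kummer's theorem (the addition $p^{nw+n-1}+(p-1)p^{nw+n-1}=p^{n(w+1)}$ produces exactly one carry). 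On the RHS, the gradability constraint $j+k\equiv 1\pmod{p^n-1}$ splits the pairs in three: $j+k=1$ is incompatible with $M$ having both $u$ and $v$; $j+k>p^n$ feeds only into $G_{l+c}$ with $c>0$, yielding variables of arity $>l$; and $j+k=p^n$ feeds into $G_l$. For the last class, $a_{jk}=-b_1\binom{p^n}{j}$ is a $p$-adic unit precisely for $(j,k)=(ip^{n-1},(p-i)p^{n-1})$ with $1\le i\le p-1$.

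The heart of the argument is to pin down which arity-$l$ variables actually appear for these pairs. The coefficient of $M$ in $G_l(t_1,\ldots,t_{l-p^n},u^{\times j},v^{\times k})$ is a sum of $\alpha^{(l)}_{\vec s}$ over tuples $\vec s$ agreeing with $\vec r$ in the first $l-p^n$ entries and whose last $j$ (resp.\ $k$) entries sum to $p^{nw+n-1}$ (resp.\ $(p-1)p^{nw+n-1}$). The key claim is that among such $\vec s$ which are $p^n$-special, the only one is $\vec r$, arising solely for $(j,k)=(p^{n-1},(p-1)p^{n-1})$. This is a sum–of–powers computation: if $ip^{n-1}$ powers of $p^n$ sum to $p^{nw+n-1}$, no summand exceeds $p^{nw}$; letting $t$ count summands equal to $p^{nw}$, the crude bound on the remaining terms forces $t\ge p^{n-1}(p^n-i)/(p^n-1)$, which combined with $t\le p^{n-1}$ pins $t=p^{n-1}$ and leaves zero remaining mass, compatible with positivity only when $i=1$. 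Non-$p^n$-special contributions are dispatched immediately: those that are $p$-special but not $p^n$-special are zero by Prop.~\ref{deg_coef} and gradability, while those that are not $p$-special are rewritten through Prop.~\ref{upper_triangular} (i.e., $A_l$) in terms of $\alpha^{(k)}_{\vec s}$ with $k>l$.

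Putting everything together, the chosen coefficient in $A_{l-p^n+1}$ reduces modulo $p$ to
\[
0 \;\equiv\; a_{p^{n-1},(p-1)p^{n-1}}\,\alpha^{(l)}_{\vec r} \;+\; (\text{linear combination of }\alpha^{(k)}_{\vec s}\text{ with }k>l),
\]
and since $a_{p^{n-1},(p-1)p^{n-1}}$ is a $p$-adic unit, we can solve for $\alpha^{(l)}_{\vec r}$. The main obstacle I anticipate is the uniqueness assertion isolating $\vec r$ as the sole $p^n$-special contribution among all relevant $(j,k)$; the remainder is routine bookkeeping using Propositions \ref{upper_triangular} and \ref{deg_coef}.
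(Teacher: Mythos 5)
Your proposal is correct and follows essentially the same route as the paper: you extract the same monomial coefficient from $A_{l-p^n+1}$, kill the arity-$(l-p^n+1)$ contribution because the resulting partition is $p$-special (Kummer's theorem is exactly the content of Prop.~\ref{upper_triangular} in this case), use the constraint $j+k\equiv 1\pmod{p^n-1}$ to isolate $j+k=p^n$ as the only source of arity-$l$ terms, and handle non-$p^n$-special partitions via Props.~\ref{upper_triangular} and \ref{deg_coef}. The one small divergence is in the uniqueness argument for $p^n$-special $\vec s$: you bound the count $t$ of maximal summands from the $u$-constraint alone, whereas the paper adds the $u$- and $v$-constraints to get $\sum_{q=l-p^n+1}^l p^{nk_q}=p^{n(s+1)}$ and observes directly that the only solution is all $k_q=s$; both work, though your derivation of $t=p^{n-1}$ needs the integrality of $t$ together with $i\le p-1$ to close (the two stated inequalities alone do not pin $t$ for general $i$, which is worth spelling out). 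Also, since the CAOT here is stated over $\F{p}$, ``$a_{p^{n-1},(p-1)p^{n-1}}$ is nonzero in $\F{p}$'' is the cleaner phrasing.
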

\begin{proof}
As there are at least $p^n$ equal numbers in the partition $\vec{r}$,
 it follows that $l> p^n-1$.
Denote by $r_{l-p^n+1}=r_{l-p^n+2}=\ldots = r_l = p^{sn}$ for some $s\ge 0$.

Recall that modulo $p$ the formal group law of the $n$-th Morava K-theory looks as
$$x+y-\frac{a_1}{p}\sum_{j=1}^{p-1} \binom{p}{j}x^{p^{n-1}j}y^{p^{n-1}(p-j)}
+\textrm{higher degree terms},$$
i.e. $a_{t,p^n-t}\neq 0 \mod p$ if and only if $t=jp^{n-1}$, $0<j<p$.

Look at the coefficient of the monomial 
$t_1^{r_1}\cdots t_{l-p^n}^{r_{l-p^n}}u^{p^{sn+n-1}}v^{p^{sn+n-1}(p-1)}$ in the equation $A_{l-p^n+1}$.
The variable $\alpha^{(l)}_{(r_1,\ldots,r_l)}$ appears non-trivially as a coefficient of this monomial
 coming from the term 
 $$a_{p^{n-1},(p-1)p^{n-1}}G_{l}(t_1,\ldots, t_{l-p^n},u^{\times p^{n-1}}, v^{\times (p-1)p^{n-1}}).$$ 
 
Here we have used the assumption that there are at least $p^n$ equal numbers in $(r_1,\ldots,r_l)$.
Otherwise we would get some multiple of $\alpha^{(l)}_{(r_1,\ldots,r_l)}$,
because $G_l(t_1,\ldots, t_l)$ is symmetric and monomials may 'glue' in 
$G_{l}(t_1,\ldots, t_{l-p^n},u^{\times p^{n-1}}, v^{\times (p-1)p^{n-1}})$.

The first claim is that variables $\alpha^{(k)}_{\vec{m}}$ with $k<l$ 
do not appear in the coefficient of this monomial. 
In equation $A_{l-p^n+1}$ these variables may appear only for $k=l-p^n+1$,
however they do not as monomials in question are additive (Prop. \ref{upper_triangular}).

The second claim is that variables $\alpha^{(l)}_{(m_1,\ldots,m_l)}$
 which appear in the coefficient of the monomial in question
are only those which come from non-additive monomials, i.e. $p$ does not divide $m_i$ for some $1\le i\le l$.
These can be expressed in terms of variables $\alpha^{(k)}_{\vec{s}}$ with $k>l$ 
via equation $A_l$ by Prop. \ref{deg_coef} and so the proof will be finished.

Indeed, the only variables coming from additive gradable monomials are 
$\alpha^{(l)}_{(p^{nk_1},\ldots,p^{nk_l})}$. 
In the equation $A_{l-p^n+1}$ these may come from the term
$a_{wp^{n-1},(p-w)p^{n-1}}G_{l}(t_1,\ldots, t_{l-p^n},u^{\times wp^{n-1}}, v^{\times (p-w)p^{n-1}})$ 
for some $w: 1\le w \le p-1$. 
To appear as a coefficient of the monomial 
$t_1^{r_1}\cdots t_{l-p^n}^{r_{l-p^n}}u^{p^{sn+n-1}}v^{p^{sn+n-1}(p-1)}$
 we need $p^{nk_q} = r_q$ for $q<l-p^n$, 
and moreover $\sum_{q=l-p^n+1}^{l} p^{nk_q} = p^n p^{ns}=p^{n(s+1)}$.

It is easy to see that the latter equation
has a unique solution with $k_{l-p^n+1}=\ldots=k_{l}=s$.
This corresponds to the coefficient we were interested in,
 so no other variable $\alpha^{(l)}_{(p^{nk_1},\ldots,p^{nk_l})}$ appears in the coefficient 
 of the monomial in question.
\end{proof}

\begin{Cr}\label{grad_modp}
The $\F{p}$-vector space of gradable additive operations is 1-dimensional
and is generated by $\phi_m \mod p$,
where $\phi_m:K(n)^*\rarr CH^m\ot\Zp$ is a generator of integral additive operations. 

In particular, Lemma \ref{lift_add} and Cor. \ref{cr_kn_add} are true.
\end{Cr}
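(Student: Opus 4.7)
The plan is to combine the three preparatory propositions into a descending induction on the arity $l$, which will show that gradability together with the linear system $(A_l)$ leaves at most one free parameter; I then exhibit $\phi_m \bmod p$ as a witness that this parameter can be nonzero, and deduce the remaining statements.

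By the CAOT, a gradable additive operation $\tilde{K}(n)^* \to CH^m/p$ is encoded by symmetric polynomials $G_l \in \F{p}[t_1,\ldots,t_l]\cdot(t_1\cdots t_l)$ of total degree $m$ satisfying $(A_l)$, which I parametrize by the coefficients $\alpha^{(l)}_{\vec{r}}$, where $\vec{r}$ runs over partitions of $m$ of length $l$. Gradability forces $\alpha^{(l)}_{\vec{r}}=0$ whenever some part $r_q$ fails to be congruent to $1$ modulo $p^n-1$. Since $G_l$ is divisible by $t_1\cdots t_l$, only arities $l\le m$ occur, so I will perform descending induction on $l$, treating $\alpha^{(k)}_{\vec{s}}$ with $k>l$ as already expressed in terms of a single chosen free parameter.

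For each surviving partition $\vec{r}$ of arity $l$ (with all parts $\equiv 1 \pmod{p^n-1}$), I will apply exactly one of three elimination rules. If $\vec{r}$ is not $p$-special, Proposition \ref{upper_triangular} uses the corresponding coefficient in $(A_l)$ to express $\alpha^{(l)}_{\vec{r}}$ in terms of arity-$(>l)$ variables. If $\vec{r}$ is $p$-special but some part fails to be of the form $p^{nw}$, Proposition \ref{deg_coef} combined with gradability forces $\alpha^{(l)}_{\vec{r}}=0$. Finally, if $\vec{r}$ is $p^n$-special and some power appears at least $p^n$ times, Proposition \ref{grad_coef} eliminates $\alpha^{(l)}_{\vec{r}}$ via equations $(A_{l-p^n+1})$ and $(A_l)$. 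The partitions surviving all three procedures are the $p^n$-special ones in which no power appears more than $p^n-1$ times; these correspond bijectively to the base-$p^n$ digit expansions of $m$, of which there is exactly one. Hence the space of gradable additive operations has dimension at most one.

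On the other hand, Proposition \ref{prop:add_grad} shows that $\phi_m$ is gradable, and its reduction mod $p$ is nonzero since $\phi_m$ generates a free $\Zp$-module of rank one. So the dimension is exactly one and the generator is $\phi_m \bmod p$, proving the main claim. Lemma \ref{lift_add} then follows: the operation $\pi = p^\mu P_i + a\phi_i$ is integral by hypothesis, its derivative vanishes mod $p$ because $\partial^1 \phi_i = 0$ and $\mu>0$, so $\pi \bmod p$ is additive; it is gradable by Proposition \ref{chern_grad}, and must therefore be proportional to $\phi_i \bmod p$. Corollary \ref{cr_kn_add} follows similarly: $\phi_v^{p^{nk}} \bmod p$ equals $(\phi_v \bmod p)^{p^{nk}}$, which is additive as a Frobenius power of an additive map in characteristic $p$, gradable as a product of $p^{nk}\equiv 1 \pmod{p^n-1}$ gradable operations, and nonzero because $\F{p}$-valued operation rings on products of projective spaces are integral domains. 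The main obstacle is verifying that the descending induction is well-founded, namely that the expressions provided by Propositions \ref{upper_triangular} and \ref{grad_coef} genuinely involve only variables of strictly greater arity with no circular dependencies; this is built into the statements of those propositions and is the structural point behind the uniqueness of the surviving free parameter.
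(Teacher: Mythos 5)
Your proof is correct and takes essentially the same approach as the paper: you organize the elimination supplied by Propositions \ref{upper_triangular}, \ref{deg_coef} and \ref{grad_coef} into a descending induction on arity, identify the unique surviving $p^n$-special partition with all digit multiplicities below $p^n$ (the base-$p^n$ expansion of $m$) as the one free parameter, and then use Proposition \ref{prop:add_grad} together with the COT to exhibit $\phi_m \bmod p$ as a nonzero gradable generator. The derivations of Lemma \ref{lift_add} (via Proposition \ref{chern_grad} and the remark that $p^{\mu}\partial^1 P_i$ is $p$-divisible for $\mu>0$ because $\partial^1 P_i = \partial^1 c_i$ is integral by Cartan's formula) and of Corollary \ref{cr_kn_add} (Frobenius power is additive, gradable by Proposition \ref{grad}(2), nonzero since $CH^*/p$ of a product of projective spaces is a domain) also match the paper's.
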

\begin{proof}
To prove this corollary we 'solve' the system of equations $(A_l)$ for coefficients of gradable operations. 
Namely, we express all variables $\alpha^{(l)}_{(r_1,\ldots,r_l)}$ as multiples of one of
them. This will yield that the dimension of the space of solutions
is bounded above by 1. 

By Prop. \ref{prop:add_grad} the operation $\phi_m \mod p$ is gradable.
If it were zero, then it would follow from the COT
that $\frac{1}{p}\phi_m$ is an integral additive operation
which contradicts the assumption that $\phi_m$ is a generator.
This yields that the dimension of the space of gradable additive operations 
is bounded below by 1.

We say that the variable $\alpha$ is expressible in terms of some variables $\beta_s$
if there exist a consequence of the system of equations $(A_l)$ which
looks like this: $\alpha = \sum b_s \beta_s$, where $b_s\in\F{p}$.
From Prop. \ref{upper_triangular} and  \ref{grad_coef} 
we know that the variable $\alpha^{(l)}_{(r_1,\ldots,r_l)}$
is expressible in terms of variables with a bigger superscript 
whenever there exists $r_q \neq p^{ns_q}$ for any $s_q$ or
 if there are at least $p^n$ equal numbers among $(r_1,\ldots,r_l)$.

Consider now other variables $\alpha^{(l)}_{(r_1,\ldots,r_l)}$ corresponding to $p^n$-special partitions 
which do not satisfy this condition.
Denote by $m_k$ the number of $p^{kn}$ among $(r_1,\ldots,r_l)$, where $k\ge 0$.
We know that $0\le m_k<p^n$ for any $k$,
 and we know that $m=m_0 + m_1 p^n + m_2 p^{2n} + \ldots + m_s p^{sn}$
 because it is a partition of $m$.
Thus $m_i$ are uniquely defined by $m$ as they are the digits of $m$ in the $p^n$-ary digit system.
This means that the 'exceptional' variable 
which can not be expressed in terms of variables 
with higher superscript by propositions \ref{upper_triangular}, \ref{grad_coef}
is unique. 
(Note that we do not claim that it could not be expressed in this way by some other argument.)

If $m<p^n$, then the space of gradable additive operations
is 1-dimensional as it has to be determined by the only coefficient $\alpha^{(m)}_{(1,1,\ldots,1)}$
which is the 'exceptional' one.
If $m\ge p^n$, then the variable $\alpha^{(m)}_{(1,1,\ldots,1)}$ 
equals 0 due to the Prop. \ref{grad_coef}.
Any variable except for the exceptional one can be expressed 
in terms of the exceptional one and $\alpha^{(m)}_{(1,1,\ldots,1)}$.
The dimension of the space of gradable additive operations is thus proved to be 1.

To prove Lemma \ref{lift_add} note that due to Prop. \ref{chern_grad} expression $p^{\mu}P_i+a\phi_i$ 
defines a gradable operation. It was shown in Section \ref{chern_constr} that if it is integral
then it is additive modulo $p$. Therefore it is propotional to $\phi_i$ modulo $p$.

Cor. \ref{cr_kn_add} follows as $(\phi_v)^{p^{kn}}$ is a gradable additive operation
 by Prop. \ref{grad} and is non-zero as follows from the COT since
the coefficient ring of $CH^*/p$ has no zero divisors.
Thus, operations $(\phi_v)^{p^{kn}}$ and $\phi_{vp^{kn}}$ are proportional.
\end{proof}

All the steps of the induction construction of Chern classes $c_i$
are now verified, and so, parts i), ii) and iiibis) of Theorem \ref{main} are proven.

One can show after the proof of iii) of Theorem \ref{main} 
(and in the same manner as the proof in Section \ref{chern-base})
 that the only liftable additive operations $K(n)\rarr CH^*/p$ are gradable operations 
and their $p$-primary  powers (however $p$-primary powers do not lift in general as additive operations).
From this it is easy to show that there are many additive operations to $CH^*/p$ which are not liftable.
Let us produce now a construction of such operations 
with a particular example which is proved to be non-liftable directly.

It is well-known (cf. \cite[Th. 6.6]{PS_Vish1}) that
 to any $p$-partition $(p^{s_1},\ldots, p^{s_i})$ of $j$ corresponds an additive operation
$CH^i/p\rarr CH^j/p$
which sends the product of $i$ first Chern classes $t_1\cdots t_i$
to the symmetrization of the monomial $t_1^{p^{s_1}}\cdots t_i^{p^{s_i}}$
(as in the notations of Section \ref{sec:CAOT}).

It is clear that the composition of such an operation with a gradable operation is
not always gradable. 
We expect however that all additive operations from $K(n)^*$ to $CH^*/p$ 
are generated by the action of the Steenrod algebra 
on gradable operations though we do not prove it here. 

For example, let $n>1$, and consider an additive operation 
$Q:CH^2/p\rarr CH^{p+1}/p$ which sends  $t_1t_2$ to $t_1^pt_2 + t_1t_2^p$.
The composition $Q\circ c_2$ is a non-zero additive operation, which is supported on $K(n)^2$. 
Assume that $p\neq 2$.  From iii) of Theorem \ref{main} it follows
that there is the only (up to a scalar) integral operation from $K(n)^2$ to $CH^{p+1}\ot\Zp$ 
which is $c_2^{\frac{p+1}{2}}$. One easily checks that this is not additive modulo $p$ therefore not proportional to $Q\circ c_2$, and therefore $Q\circ c_2$ is not liftable.

\subsection{Chern classes freely generate all operations}\label{chern-base}

In this section we prove the statement iii) of Theorem \ref{main}.

In Sections \ref{chern_constr}, \ref{op_modp} we have constructed
 operations $c_i:K(n)^*\rarr CH^i\ot\Zp$, $i\ge 1$,
 satisfying properties i), ii) of Theorem \ref{main} and iiibis) of Section \ref{chern_constr}.
The latter says that generators of additive integral operations $\phi_i$ 
can be expressed as an integral polynomial in $c_1,\ldots, c_i$.
We now prove that Chern classes $c_i$ generate freely the ring of integral operations to Chow groups. 
In fact, we get a more general statement that external products of Chern classes
freely generate rings of poly-operations (Prop. \ref{prop_chern_base}).

The proof is independent of the construction of Chern classes
 and uses only the notion of a gradable operation (Def. \ref{grad}) 
 and the fact that integral additive operations are gradable (Prop. \ref{prop:add_grad}).

The starting point for the proof of iii) is the natural inclusion
$[\tilde{K}(n)^*, CH^*\ot\Zp]\subset [\tilde{K}(n)^*, CH^*\ot\QQ]$ provided by the COT. 
The latter space is freely generated by components of the Chern character (Prop. \ref{ch-base}),
and it is easy to show that it is freely generated by Chern classes 
(Prop. \ref{chern_polybase} below).
Thus, if we have an integral operation it can be uniquely expressed as a rational series 
in Chern classes. 
The problem is then to prove that this is in fact an integral series. 
To do this we study derivatives of this operation 
and, roughly speaking, reduce everything to the case of additive poly-operations (Lemma \ref{polyderint}).

\begin{Prop}\label{chern_polybase}
Chern classes are free generators of the $\QQ$-algebra
 of poly-operations from $\tilde{K}(n)^*$ to $CH^*\ot\QQ$.
\end{Prop}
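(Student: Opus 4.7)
The plan is to reduce to Proposition \ref{ch-base}, which identifies the $\QQ$-algebra of $r$-ary poly-operations from $\tilde{K}(n)^*$ to $CH^*\ot\QQ$ with the free algebra $\QQ[[ch_1,ch_2,\ldots]]^{\odot r}$ on external products of Chern character components. Granted this, it suffices to show that the two infinite families $\{c_i\}_{i\ge 1}$ and $\{ch_i\}_{i\ge 1}$ are related by a degree-preserving triangular change of variables over $\QQ$; freeness of one system then automatically transfers to the other, both at the level of ordinary operations and, by passing to external products, at the level of $r$-ary poly-operations.

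To set up the change of variables, I would use the Cartan formula (property ii of Theorem \ref{main}): applying $\log_{K(n)}$ (a homomorphism from $F^{K(n)}$ to the additive formal group law over $\QQ$) to $c_{tot}(x+y)=F^{K(n)}(c_{tot}(x),c_{tot}(y))$ shows that each graded component $(\log_{K(n)}c_{tot})_i$ defines an additive operation from $\tilde{K}(n)^*$ to $CH^i\ot\QQ$. By the rational form of Proposition \ref{add_rank}, the space of such additive operations is one-dimensional over $\QQ$ and generated by $ch_i$, so $(\log_{K(n)}c_{tot})_i=\lambda_i\,ch_i$ for some $\lambda_i\in\QQ$. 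Unwinding the definition of $P_i$ (Section \ref{sec_cor}) gives the triangular relation
\[
c_i=P_i(c_1,\ldots,c_{i-1})+\lambda_i\,ch_i, \qquad P_i\in\QQ[c_1,\ldots,c_{i-1}].
\]

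The only substantive verification is that $\lambda_i\neq 0$ for every $i$. For $1\le i\le p^n-1$, Lemma \ref{lm_add_powers} gives $P_i=0$, and $c_i$ was chosen as a non-zero generator of integral additive operations, hence a non-zero $\QQ$-multiple of $ch_i$. For $i\ge p^n$, the additive part of $c_i$ equals $\phi_i/p^{\mu_i}$ by Corollary \ref{cr_chern_exist}, and $\phi_i$ is a generator of integral additive operations, so it is non-zero after tensoring with $\QQ$. With $\lambda_i\in\QQ^\times$ for all $i$, induction on $i$ produces mutually inverse polynomial expressions of each $ch_i$ in $c_1,\ldots,c_i$ and of each $c_i$ in $ch_1,\ldots,ch_i$, compatible with the total-degree grading. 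Taking $r$-fold external products yields a graded $\QQ$-algebra isomorphism $\QQ[[c_1,c_2,\ldots]]^{\odot r}\cong\QQ[[ch_1,ch_2,\ldots]]^{\odot r}$, and Proposition \ref{ch-base} identifies the target with the algebra of $r$-ary poly-operations. The only potential obstacle is the non-vanishing of each $\lambda_i$; everything else is formal bookkeeping with the triangular change of variables.
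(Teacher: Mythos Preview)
Your proposal is correct and follows essentially the same approach as the paper: both reduce to Proposition~\ref{ch-base} and establish a degree-triangular change of variables between $\{c_i\}$ and $\{ch_i\}$ over~$\QQ$. The paper's proof is terser, invoking property iiibis) directly to express $\phi_i$ (hence $ch_i$) as a polynomial in $c_1,\ldots,c_i$, and citing ``the construction'' for the reverse direction; you unpack the same triangular relation $c_i = P_i(c_1,\ldots,c_{i-1}) + \lambda_i\,ch_i$ explicitly and take the extra care to verify $\lambda_i\neq 0$, which the paper leaves implicit in the fact that $\phi_i$ is a generator of a rank-one module.
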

\begin{proof}
From Prop. \ref{ch-base} one easily sees that it is enough to show
 that components of the Chern character can be expressed as polynomials in Chern classes,
 and Chern classes can be expressed as polynomials in components of the Chern character.
 
Indeed, by iiibis) a generator of integral additive operations $\phi_i$ is
equal to an integral polynomial in Chern classes. 
Rationally the operation $\phi_i$ is a multiple
 of $ch_i$ and 
 thus $ch_i$ is a rational polynomial in  $c_1,\ldots, c_i$.

 The operations $c_i$ can be expressed as polynomials in $ch_j$, $j\le i$,
  by the construction.
\end{proof}

Recall from Section \ref{prelim} that we denote by $\odot$ the external product of operations.
Proposition \ref{chern_polybase} thus can be written as
$[\tilde{K}(n)^{*\times r}, CH^*\ot\QQ]=\QQ[[c_1\ldots, c_n, \ldots]]^{\odot r}$. 

We call $\psi_i$ the $i$-th component of poly-operation
 $\psi_1\odot \psi_2 \odot \cdots \odot \psi_r$.

\begin{Prop}\label{polyaddbase}
Let $A^*$ be a theory of rational type, let $B^*$ be a g.o.c.t. 
and let $\{\phi_s\}_{s\in S}$ be a set of $B$-linearly independent additive operations 
from $A^*$ to $B^*$.

Then external products of operations 
$\{\phi_{s_1}\odot\phi_{s_2}\odot \cdots\odot \phi_{s_r}\}_{s_1,s_2, \ldots, s_r \in S}$
are $B$-linearly independent.
\end{Prop}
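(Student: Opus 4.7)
The plan is to induct on the arity $r$, with base case $r=1$ given by the hypothesis. For the inductive step, given a finite vanishing relation
\[
\sum_{\vec s \in S^r} b_{\vec s}\,\phi_{s_1}\odot\cdots\odot\phi_{s_r} = 0, \qquad b_{\vec s}\in B,
\]
I would regroup by the last index as $\sum_{s\in S}\Phi_s\odot\phi_s = 0$, where
\[
\Phi_s := \sum_{\vec s'\in S^{r-1}} b_{\vec s',\,s}\,\phi_{s_1'}\odot\cdots\odot\phi_{s_{r-1}'}
\]
is an $(r-1)$-ary poly-operation. The result then reduces to the following key lemma: \emph{if $\{\phi_s\}_{s\in S}$ is $B$-linearly independent as a family of additive operations and $\{\Phi_s\}$ is a finite family of $(r-1)$-ary poly-operations satisfying $\sum_s \Phi_s \odot \phi_s = 0$, then each $\Phi_s = 0$}. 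Applying the inductive hypothesis to each such $\Phi_s = 0$ immediately gives $b_{\vec s',s} = 0$ for all $\vec s', s$, finishing the induction.

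To prove the key lemma I would invoke the COT and restrict to products of infinite projective spaces. For arbitrary $P_i \in A^*((\mathbb{P}^\infty)^{\times l_i})$, $1 \le i \le r$, let $T$ denote the Chern class variables of the first $r-1$ factors and $U$ those of the last, and set $F_s := \Phi_s(P_1,\ldots,P_{r-1}) \in B[[T]]$ and $g_s := \phi_s(P_r) \in B[[U]]$; the relation becomes $\sum_s F_s(T)\,g_s(U) = 0$ in $B[[T,U]] = B[[T]]\,\widehat\otimes\,B[[U]]$. Expanding $F_s = \sum_\tau f_{s,\tau}\,\tau$ with $f_{s,\tau} \in B$ and equating the coefficient of each $T$-monomial $\tau$ in $B[[T,U]] = B[[T]][[U]]$ yields $\sum_s f_{s,\tau}\,g_s(U) = 0$ in $B[[U]]$ for every $\tau$. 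Since this holds for every $P_r$, the additive operation $\sum_s f_{s,\tau}\,\phi_s$ vanishes on all products of $\mathbb{P}^\infty$'s and is therefore zero by the COT; $B$-linear independence of $\{\phi_s\}$ then forces $f_{s,\tau} = 0$ for all $s,\tau$, so $F_s = 0$. As $(P_1,\ldots,P_{r-1})$ was arbitrary, $\Phi_s$ vanishes on every product of projective spaces, and a second application of the COT gives $\Phi_s = 0$.

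The conceptual heart — and the only place where something non-formal happens — is the coefficient extraction in $B[[T,U]] = B[[T]][[U]]$, which converts an identity between poly-operations into an identity between additive operations by freezing the first $r-1$ inputs. Once this reduction is in place, the rest is bookkeeping: splitting the variables into two groups, identifying the completed tensor product, and invoking the COT twice — once for additive operations on the last factor, once for the $(r-1)$-ary poly-operation on the first $r-1$ factors. The finiteness of every linear combination involved (only finitely many $b_{\vec s}$ are non-zero) is essential throughout so that all arguments take place over finitely many indices.
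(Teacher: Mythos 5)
Your proof is correct and takes essentially the same approach as the paper: both induct on arity and reduce to lower arity by freezing one factor and extracting coefficients of $T$-monomials (in the paper, by choosing a $B$-linear projection $p : B[[z_1,\ldots,z_k]] \to B$ with $p(R(x)) \neq 0$), then invoking the COT and the $B$-linear independence of $\{\phi_s\}$. The only presentational difference is that you isolate a cleaner and slightly more general key lemma — that $\sum_s \Phi_s \odot \phi_s = 0$ forces every $\Phi_s = 0$ for an arbitrary finite family of lower-arity poly-operations — whereas the paper argues by contrapositive, picking a non-zero monomial and exhibiting a specific restriction and projection witnessing non-triviality; you also group on the last component instead of the first, which is immaterial by symmetry.
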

\begin{proof}
We prove the statement by induction on arity of poly-operations. 

{\bf Base of induction} is the assumption of the Proposition. 

{\bf Induction step}. Suppose we know the statement of the Proposition
 for $i$-ary poly-operations, where $i<r$.
Consider a non-trivial linear combination 
$T:=\sum \beta_{(s_1,s_2,\ldots,s_r)} \phi_{s_1}\odot \phi_{s_2} \odot \cdots \odot \phi_{s_r}$.
Choose a monomial with a non-zero coefficient in this linear combination 
$\beta \phi_{s_1}\odot \phi_{s_2} \odot \cdots \odot \phi_{s_r}$,
 and collect all the terms in $T$ which differ from it only in the first component.
Denote it by $ R\odot \phi_{s_2} \odot \cdots \odot \phi_{s_r} 
:= (\sum_{l} \beta_{(l,s_2,\ldots,s_r)}\phi_{l})\odot \phi_{s_2} \odot \cdots \odot \phi_{s_r}$.

By assumption of the Proposition using the COT
we obtain that there exists $x$ in $A^*((\mathbb{P}^\infty)^{\times k})$ for some $k\ge 0$,
s.t. $R(x)\neq 0$. 
Restrict the poly-operation $T$ in the first component to this element.
Thus, we get a natural transformation $T_x:=T(x,-)$ from the functor
$\times_{i=1}^{k-1} A^*$ to the functor $B^*\circ ((\mathbb{P}^\infty)^{\times k}\times \prod)$.

Note that $B^*(X\times (\mathbb{P}^\infty)^{\times k})\cong B^*(X)\ot_B B[[z_1,\ldots, z_k]]$
for any $X$ by the projective bundle theorem.
Choose any $B$-linear projection $p:B^*(\mathbb{P}^\infty)^{\times k}\cong B[[z_1,\ldots,z_k]]\rarr B$
s.t. $p(R(x))\neq 0$ and compose it with $T_x$ to get an $(k-1)$-ary poly-operation
$(id\otimes p)\circ T_x$.
It can be expressed as a sum of $p(R(x)) \phi_{s_2} \odot \cdots \odot \phi_{s_r}$ and 
a linear combination of external products of $\phi_i$ which does not contain this summand.
By induction assumption this is a non-trivial poly-operation, and therefore $T$ is non-trivial as well.
\end{proof}
\begin{Cr}\label{cr_polyadd}
External products of additive operations $\{\phi_i^{p^s}\}_{\{p^n \nmid i\}}$ 
are linearly independent over $\F{p}$ as poly-operations to $CH^*/p$.
\end{Cr}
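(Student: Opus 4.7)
The plan is to reduce the poly-operation statement to a $1$-ary independence statement via Proposition \ref{polyaddbase}, then exploit the explicit description of $\phi_i^{p^s}$ modulo $p$ as a polynomial in Chern classes with a distinguishable leading monomial $c_i^{p^s}$, and finally invoke the $\F{p}$-freeness of Chern class monomials that follows from part iii) of Theorem \ref{main} (proved earlier in this section).

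First I apply Proposition \ref{polyaddbase} with $B = \F{p}$ and $B^* = CH^*/p$: each $\phi_i^{p^s}$ is additive modulo $p$ (by the Frobenius identity $(x+y)^{p^s} \equiv x^{p^s} + y^{p^s} \mod p$ together with additivity of $\phi_i$), so it suffices to verify that the family $\{\phi_i^{p^s}\}_{p^n \nmid i,\, s \ge 0}$ is $\F{p}$-linearly independent as a set of additive operations $\tilde{K}(n)^* \to CH^*/p$.

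Next I describe $\phi_i^{p^s}$ modulo $p$ in terms of Chern classes. When $p^n \nmid i$, part 2 of Lemma \ref{lm_add_powers} (with $k = 0$) gives $\nu_p(P_i) \ge 0$, hence $\mu_i = 0$, and the construction in Section \ref{chern_constr} then identifies $\phi_i$ integrally with $c_i - P_i$ where $P_i \in \Zp[c_1,\dots,c_{i-1}]$. Raising to the $p^s$-th power and applying Frobenius modulo $p$ yields
\[
\phi_i^{p^s} \equiv c_i^{p^s} \pm P_i^{p^s} \mod p,
\]
a polynomial in $c_1,\dots,c_i$ whose unique monomial containing the variable $c_i$ is $c_i^{p^s}$, with coefficient in $\F{p}^\times$.

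Finally, suppose $\sum_{(i,s)} \lambda_{i,s}\phi_i^{p^s} \equiv 0 \mod p$ is a putative nontrivial relation. Separating by the target degree $m = ip^s$ (operations of distinct target are automatically independent) I may restrict to a fixed $m$; then each $i$ determines the corresponding $s$ uniquely. Letting $i_{\max}$ be the largest index with $\lambda_{i_{\max},s_{\max}} \ne 0$, the monomial $c_{i_{\max}}^{p^{s_{\max}}}$ appears with nonzero coefficient in $\phi_{i_{\max}}^{p^{s_{\max}}}$ but cannot appear in any $\phi_j^{p^t}$ with $j < i_{\max}$, since such an operation lies in $\F{p}[c_1,\dots,c_j]$. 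By Theorem \ref{main} iii) --- specifically the remark that Chern class monomials are $\F{p}$-linearly independent modulo $p$ --- this forces $\lambda_{i_{\max},s_{\max}} = 0$, and induction on the number of surviving terms finishes the argument. The one delicate point is that the leading monomial $c_{i_{\max}}^{p^{s_{\max}}}$ might a priori be cancelled by ``lower'' contributions from other $\phi_j^{p^t}$; splitting first by fixed target degree $m$ is precisely what guarantees that no two distinct summands share the same largest Chern-class index, so the leading term argument goes through cleanly.
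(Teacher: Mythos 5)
Your reduction via Proposition \ref{polyaddbase} to the $1$-ary case, the splitting by target degree $m$ (so that each $i$ determines $s$ uniquely), and the observation that $\phi_i^{p^s}$ is additive mod $p$ are all fine and match the paper's opening move. The problem is the final step. You invoke ``the $\F{p}$-freeness of Chern class monomials that follows from part iii) of Theorem \ref{main} (proved earlier in this section),'' but this is circular: part iii) is proved at the \emph{end} of Section \ref{chern-base}, and the logical chain is exactly Cor.~\ref{cr_polyadd} $\Rightarrow$ Lemma~\ref{polyderint} $\Rightarrow$ Prop.~\ref{prop_chern_base} $\Rightarrow$ Theorem~\ref{main}~iii) $\Rightarrow$ the remark that Chern-class monomials are $\F{p}$-independent. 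So your proof of Cor.~\ref{cr_polyadd} feeds in a consequence of Cor.~\ref{cr_polyadd}. Nothing before this point in the paper establishes that a $\Zp$-polynomial in the $c_j$'s which vanishes as an operation mod $p$ must have all coefficients in $p\Zp$ --- $\QQ$-independence (Prop.~\ref{chern_polybase}) does not imply $\F{p}$-independence, and ``iiibis)'' only says the polynomial expressing $\phi_i$ is nonzero mod $p$, not that the mod-$p$ monomials are independent.

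The paper's own argument is deliberately independent of the Chern-class construction. After reducing to $1$-ary and a fixed target degree, it extracts the $p^r$-th root (using that $\F{p}$ has no nilpotents, so a nonzero operation mod $p$ has nonzero $p$-primary powers). The resulting relation $\sum_j \alpha_j \phi_j^{p^{r_j-r}} \equiv 0$ has a \emph{unique} summand $\alpha_i\phi_i$ with exponent $0$, and that summand is gradable (Prop.~\ref{prop:add_grad}); every other summand is $\phi_j^{p^{r_j-r}}$ with $0 < r_j-r < n$ (since $jp^{r_j-r}=i$ and $p^n\nmid i$), whose $G_l$-polynomials contain \emph{no} gradable monomials at all, because $p^{r_j-r}\not\equiv 1\bmod(p^n-1)$ by Prop.~\ref{deg_coef}. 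Gradable and non-gradable monomials cannot cancel, so $\alpha_i=0$, a contradiction. If you want to keep your ``leading monomial'' structure, you would have to replace the appeal to Theorem~\ref{main}~iii) by a direct monomial argument at the level of the $G_l$-polynomials --- which is essentially what the gradability argument does, using $t_j$'s rather than $c_j$'s.
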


Note that by Cor. \ref{cr_kn_add} (unconditionally proved in Cor. \ref{grad_modp})
the operation $\phi_j^{p^n}$ is proportional to $\phi_{jp^n}$ modulo $p$ for any $j\ge 1$,
which explains a somewhat strange set of additive operations in the statement.

\begin{proof}
By Prop. \ref{polyaddbase} it is enough to show that these additive operations
are linearly independent.

Suppose we have a linear combination $\sum_i \alpha_i\phi_{i}^{p^{r_i}}$,
which is zero modulo $p$ as an operation. 
Let $r=\min_i r_i$. If $r>0$ then consider $p^r$-th root of this expression.
If it were not a trivial operation, then from the COT
it would follow that its $p$-primary powers are non-trivial 
(as there are no $p$-nilpotents in the coefficient ring of a target theory). 
Thus, we obtain a relation in which there is a unique summand $\alpha_i \phi_i$, $\alpha_i\neq 0$,
which is a gradable operation (see Def. \ref{def_grad})  by Prop. \ref{prop:add_grad}.

Since $i$ is not divisible by $p^n$, then by degree reasons all other summands look
like $\alpha_j \phi_j^{p^{r_j}}$ where $jp^{r_j}=i$, and therefore $n\nmid r_j$. 
It follows from the fact that $\phi_j$ are gradable
 that the polynomials $G_l$ of these operations do not contain any gradable monomials. 
 Therefore such relation cannot exist if $\alpha_i\neq 0$.
\end{proof}

To prove the next Lemma we will use derivatives of poly-operations (Section \ref{prelim}),
and mainly derivatives of Chern classes (Section \ref{chern_constr}). 
Recall that by Cartan's formula $\partial^1 c_n$ is expressible 
as an integral polynomial in Chern classes $c_1,\ldots, c_{n-1}$.

\begin{Lm}\label{polyderint}
Let $\phi\in\QQ [[c_1, c_2, \ldots]]^{\odot N}$ 
be any $N$-ary poly-operation from $\tilde{K}(n)^*$ to $CH^*\ot\Zp$.
Assume that all its first derivatives
can be expressed as $\Zp$-series in Chern classes.

Then $\phi\in \Zp[[c_1, c_2, \ldots]]^{\odot N}$.
\end{Lm}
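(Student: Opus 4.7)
The plan is to decompose $\phi = \phi' + \phi''$ with $\phi' \in \Zp[[c_1, c_2, \ldots]]^{\odot N}$ and $\phi''$ poly-additive, and then handle the poly-additive case by the integral classification of additive operations.

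Concretely, by Prop.~\ref{chern_polybase} I may write $\phi$ as a $\QQ$-series in external products of Chern classes, so the task is to show that its $\QQ$-coefficients actually lie in $\Zp$. Since Chern classes satisfy Cartan's formula integrally (Theorem~\ref{main}~(ii)), each $\partial_j$ sends $\Zp$-series in Chern classes to $\Zp$-series in Chern classes. Using this, I would construct $\phi' \in \Zp[[c_1, c_2, \ldots]]^{\odot N}$ degree-by-degree in the Chern-class grading so that $\partial_j \phi' = \partial_j \phi$ for every $j$, by an inductive procedure modelled on the construction of the $c_i$ in Section~\ref{chern_constr}: at each degree, a rational anti-derivative of the known integral expression for the appropriate piece of $\partial_j \phi$ exists trivially, and differs from an integral choice by a poly-additive correction which can always be absorbed using property~(iiibis) applied in each slot. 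Then $\phi'' := \phi - \phi'$ has vanishing first derivatives in every slot, is therefore poly-additive, and is integral as a difference of two integral poly-operations.

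For the poly-additive case, rationally $\phi''$ is a unique $\QQ$-linear combination of external products $\phi_{j_1} \odot \cdots \odot \phi_{j_N}$ of integral additive generators (via Prop.~\ref{add_rank} applied slotwise together with a Kunneth-style unfolding through the COT). Prop.~\ref{polyaddbase} says these external products are $\Zp$-linearly independent, and a coefficient-extraction argument — restricting $\phi''$ to products of projective spaces and applying suitable $\Zp$-linear projections in the target Chow ring, exactly as in the proof of Prop.~\ref{polyaddbase} itself — shows that integrality of $\phi''$ as a poly-operation forces each coefficient to lie in $\Zp$. Property~(iiibis) then expresses each $\phi_{j}$ as an integral polynomial in Chern classes, so $\phi'' \in \Zp[[c_1, c_2, \ldots]]^{\odot N}$, and combining with $\phi'$ yields the conclusion.

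I expect the main obstacle to be the degree-by-degree integral construction of $\phi'$: at each stage one must check that the non-integral remainder of a naive anti-derivative is a poly-additive poly-operation which can in fact be cleared by an integral polynomial in Chern classes. This is essentially a higher-arity generalization of the induction carried out in Section~\ref{chern_constr} and depends crucially on the already-established properties (i), (ii), (iiibis) of the $c_i$. A secondary subtlety is the Kunneth-style isolation of $\Zp$-coefficients from integral poly-additive poly-operations, which I would handle by induction on arity $N$ together with the COT, much as in the proof of Prop.~\ref{polyaddbase}.
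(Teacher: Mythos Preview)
Your decomposition $\phi=\phi'+\phi''$ hides the entire difficulty in the construction of $\phi'$. You write that at each degree ``a rational anti-derivative \ldots\ differs from an integral choice by a poly-additive correction which can always be absorbed using property~(iiibis)'', but the existence of that \emph{integral choice} is exactly the statement of the Lemma. Two rational anti-derivatives of the same integral $\partial_j\phi$ certainly differ by something poly-additive; the question is why any one of them can be taken in $\Zp[[c_\bullet]]^{\odot N}$, and nothing in Section~\ref{chern_constr} tells you this for an arbitrary polynomial in the $c_j$'s. That section builds one $c_i$ at a time from the very specific rational polynomial $P_i$, and even there the integral correction requires Lemma~\ref{lift_add} and the whole gradable-operation analysis of Section~\ref{op_modp}. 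There is no evident way to run the same machine on a general monomial $c_{i_1}^{e_1}\cdots c_{i_s}^{e_s}$.

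The paper proceeds quite differently. It argues by contradiction, normalises a minimal counterexample to have denominators exactly $p$, and then uses a lexicographic order on monomials to isolate the leading non-integral term, which is forced to be $\frac{1}{p}c_i^{p^m}\odot P$. The crucial and genuinely nontrivial step (Step~2) shows that $p^n\nmid i$: if $p^n\mid i$ the Cartan formula produces, in the derivative, a term with denominator $p^2$ that cannot be cancelled. Only because $p^n\nmid i$ does one have $\mu_i=0$ and hence $\phi_i=c_i-P_i$ with $P_i$ integral, which allows the substitution $c_i^{p^m}\rightsquigarrow\phi_i^{p^m}$ without introducing new non-integral terms of equal or higher order. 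Iterating reduces the counterexample to a combination $\sum\frac{\beta}{p}\,\phi_{i_1}^{p^{r_1}}\odot\cdots\odot\phi_{i_N}^{p^{r_N}}$ with $p^n\nmid i_l$; note that these are \emph{not} poly-additive over $\Zp$ (only mod~$p$), so the endpoint is different from your $\phi''$, and the contradiction comes from Cor.~\ref{cr_polyadd}, which in turn rests on the gradability analysis. Your plan bypasses Step~2 entirely, and without it there is no mechanism to exclude leading terms $\frac{1}{p}c_i^{p^m}$ with $p^n\mid i$, for which property~(iiibis) gives no usable integral substitute.
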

\begin{proof}
We will prove the Lemma by contradiction.
First, it is enough to treat each graded component $\phi_j:\tilde{K}(n)^*\rarr CH^j\ot\Zp$
 of $\phi$ separately.
Thus, we may assume that $\phi$ is in fact a polynomial and not a series.

Second, if a counter-example to the statement of the Lemma existed, 
then there would be a counter-example $\phi$
 s.t. denominators of $\phi$ are at most $p$. 
Otherwise one can multiply $\phi$ by $p$ to get a counter-example with smaller denominators.
So we assume that $p\phi\in\Zp[c_1, \ldots, c_n]^{\odot N}$ for some $n\ge 1$.

Third, we may assume that all coefficients of $\phi$ are not $p$-integral,
 as we can subtract integral part without breaking a counter-example.

We will now continue to reduce a counter-example in order to finally get
$\sum_{\mathbf{i} = (i_1,i_2,\ldots,i_N); (r_1,\ldots, r_N)} \frac{\beta_{\mathbf{i}}}{p} 
\phi_{i_1}^{p^{r_1}}\odot \phi_{i_2}^{p^{r_2}} \odot \cdots \odot \phi_{i_N}^{p^{r_N}}$,
 where $\beta_{\mathbf{i}}\in\Zp^\times$ and $p^n \nmid i_l$ for all $1\le l \le N$.
This would contradict Cor. \ref{cr_polyadd}.\\

Denote by $i$ the highest index of the Chern class $c_i$ appearing in the first component of $\phi$.
One can write down $\phi$ as a sum 
$\sum_{\mathbf{s} = (s_i, s_{i-1}, \ldots, s_1)} \alpha_{\mathbf{s}} 
c_i^{s_i}c_{i-1}^{s_{i-1}}\cdots c_1^{s_1} \odot P_{\mathbf{s}}$, 
where ${P_{\mathbf{s}}\in \Zp[c_1, \ldots, c_n]^{\odot (N-1)}}$ is an integral polynomial
 in external products of Chern classes,
 s.t. $P_{\mathbf{s}}$ is not zero modulo $p$. 
 
Note that if $\alpha_\mathbf{s}$ is not zero and thus not integral by our assumptions,
then derivatives $\partial_{j-1} P_{\mathbf{s}}$ for any $j, 2\le j\le N$
 are divisible by $p$ as polynomials in Chern classes. 
Indeed, 
the derivative in the $j$-th component of $\phi$ contains
$\alpha_\mathbf{s} c_i^{s_i}c_{i-1}^{s_{i-1}}\cdots c_1^{s_1} \odot \partial_{j-1} P_{\mathbf{s}}$ which is integral
iff $\alpha_\mathbf{s} \partial_{j-1} P_{\mathbf{s}}$ is integral.

\sloppy{Define a (lexico-graphical) order $\succ$ on $i$-tuples of non-negative numbers
 as follows:}\\ $(s_i, s_{i-1}, \ldots, s_1) \succ (r_i, r_{i-1}, \ldots, r_1)$
 if and only if there exist $k:1\le k\le i$
 s.t. $s_j = r_j$ for $i\ge j \ge k$ and $s_{k-1} > r_{k-1}$.
 For example, $(1,0,\ldots,0)\succ (0,2,\ldots,0)$.
 
 Let $\mathbf{r}=(r_i, r_{i-1}, \ldots, r_1)$ be the $\succ$-highest index of non-zero coefficients
  $\alpha_{\mathbf{s}}$
 of $\phi$.
Without loss of generality we may assume that $\alpha_{\mathbf{r}}=\frac{1}{p}$
and by the choice of $i$ we have $r_i\neq 0$. Denote $P:=P_{\mathbf{r}}$.

 We will show that $\phi$ having an integral derivative in the first component
implies strong restrictions on the tuple $\mathbf{r}$.

{\bf Step 1.} Tuple $\mathbf{r}$ has to be of the form $(p^m, 0, 0, \ldots, 0)$ 
for some $m\ge 0$.

Let us recall how to differentiate monomials in Chern classes:
$$(\partial c_i^{r_i}\cdots c_1^{r_1})(x,y)=
(c_i(x)+c_i(y)+\partial c_i(x,y))^{r_i}\cdots (c_1(x)+c_1(y))^{r_1} 
 - \prod_{l=1}^i (c_l(x))^{r_l}  - \prod_{l=1}^i (c_l(y))^{r_l}.$$

In particular, if there exists $r_j\neq 0$, $j<i$,
then the expression on the right contains $c_i(x)^{r_i}\prod_{l=1}^{i-1} c_l(y)^{r_l}$.
This can not cancel, as the derivatives of $c_j$ are polynomials in $c_s$ with $s<j$. 

Thus, if there exists $r_j\neq 0$, $j<i$, then in the derivative  $\partial_1\phi$ 
we get a summand ${\frac{1}{p}c_i^{r_i}\odot \prod_{l=1}^{i-1} c_{l}^{r_{l}}\odot P}$,
which can not be cancelled by derivatives of other summands in $\phi$
due to the highest $\succ$-order of the tuple $\mathbf{r}$.

If $r_i$ is not a power of $p$ then there exist $k:1\le k \le r_i-1$, 
s.t. $\binom{r_i}{k}$ is not divisible by $p$. Thus $\partial^1 (\frac{1}{p} c_i^{r_i}\odot P)$ contains
 non-integral term $\binom{r_i}{k}\frac{1}{p} c_i^{r_i-k}\odot c_i^{k}\odot P$
 which can not be cancelled by derivatives of other monomials in $\phi$,
 because of the highest order of $(r_i, 0,\ldots, 0)$.

We have shown that the highest $\succ$-order term looks like 
this: $\frac{1}{p}c_i^{p^m}\odot P$.

{\bf Step 2.} The number $i$ has to be non-divisible by $p^n$. 

Assume the contrary and denote $v=\frac{i}{p^n}$.
Look at the derivative of $c_i$ as given by the Cartan's formula. 
It contains, in particular, 
the expression $-\frac{a_1}{p}\sum_{j=1}^{p^n-1} \binom{p^n}{j} c_v^j\odot c_v^{p^n-j}$.
 Note that it has a summand which is not zero modulo $p$ (e.g. $j=p^{n-1}$).
Therefore $\partial_1 (\frac{1}{p}c_i^{p^m}\odot P)$ 
contains a non-integral summand 
$\binom{p^n}{p^{n-1}} \frac{a_1}{p^2} c_v^{p^{m+n-1}} \odot c_v^{(p-1)p^{m+n-1}} \odot P$,
which we will call '{\sl bad}'.

We claim that in order for the 'bad' monomial to be cancelled in the derivative,
 $\phi$ itself should contain the monomial
$-\frac{a_1}{p^2}\binom{p^n}{p^{n-1}} \binom{p^{m+n}}{p^{m+n-1}}^{-1} c_v^{p^{m+n}}\odot P$. 
This would contradict our assumption that demoninators are at most $p$ in $\phi$.

The only Chern classes $c_j$, $j\le i$ s.t. $\partial^1 (c_j)^t$ 
contains $b_{(e,g)} c_v^{e}\odot c_v^{g}$ 
for some  $b_{(e,g)}\neq 0, e, g$ and some $t$ are $c_i$ and $c_v$. 
 To see this recall that the formal group law $F_{K(n)}$ 
 has only monomials $x^\alpha y^\beta$ with 
 $\alpha+\beta \equiv 1 \mod (p^n-1)$ (Prop. \ref{prop:mor_grad}, (1)). 
Thus, by Cartan's formula if $\partial^1 (c_j)^k$ contains 
external product $b_{(e,g)} c_v^{e}\odot c_v^{g}$,
then either $j=v$ or $\partial^1 c_j$ contains $c_v^{e'}\odot c_v^{g'}$ for some $e',g'>0$,
s.t. $(e'+g') \equiv 1 \mod (p^n-1)$.
However, $v(e'+g')=j$ has to be less or equal than $i=vp^n$,
and therefore $j=i$.

Therefore a bad monomial in $\partial^1 \phi$ may be cancelled 
only by derivatives of monomials $b_{(e,g)} c_i^ec_v^g\odot P$ for some $e\ge 0$
and $g>0$.
Fix maximal $p^m>e>0$ s.t. $b_{(e,g)}\neq 0$. This coefficient is not integral by our assumptions.
The derivative $\partial_1 (b_{(e,g)} c_i^ec_v^g\odot P)$ contains 
a non-integral monomial $b_{(e,g)}c_i^e\odot c_v^g\odot P$,
which can be cancelled only by the derivative of $b_{(e',g')}c_i^{e'}c_v^{g'}\odot P$
 for some $e'>e$,$g'\ge 0$ and $b_{(e',g')} \in \frac{1}{p}\Zp$. 
 By maximality of $e$ it has to be cancelled by $\frac{1}{p} c_i^{p^m}\odot P$,
 but the derivative of the latter term 
 contains only integral terms of the form $b c_i^e\odot c_v^g\odot P$ with $e>0$, $g>0$.
 
Thus, $e=0$ and so, a 'bad' momonial
 has to be cancelled by the derivative of $b c_v^{p^{n+m}}\odot P$.
As $\partial (c_v^{p^{n+m}})$ contains $c_v^{p^{m+n-1}} \odot c_v^{(p-1)p^{m+n-1}}$
with the coefficient $\binom{p^{m+n}}{p^{m+n-1}}$,
$b$ has to be equal to $-\frac{a_1}{p^2}\binom{p^n}{p^{n-1}} \binom{p^{m+n}}{p^{m+n-1}}^{-1}$
which has a $p$-valuation equal to $-2$. 
As we do not allow such big denominators in our counter-example $\phi$, 
 the claim of Step 2 is proved.

{\bf Step 3.} Reduce the non-integral monomial with the highest lexico-graphical order.

 If $i$ is non-divisible by $p^n$, then in the notaion of Lemmas \ref{lm_add_powers} and \ref{chern_exist} 
$\mu_i=0$. Thus,  a generator $\phi_i$  of additive operations to $CH^i\ot\Zp$  
 is expressible as integral polynomial $c_i-P_i\in\Zp[c_1,\ldots,c_i]$ 
 with a summand $c_i$.
Thus, the derivative of $\psi:= \phi - \frac{\phi_i^{p^m}}{p}\odot P$ in the first component 
is still integral, because $\partial^1 \phi_i=0$, and 
$\partial^1 (\phi_i)^{p^m}=\sum_{k=1}^{p^m-1} \binom{p^m}{k}\phi_i^k\odot \phi_i^{p^m-k}$, which
is divisible by $p$. 
Derivatives of $\psi$ in other components are integral because derivatives of $P$ are $p$-divisible 
as explaine in the beginning of the proof.

Note that the highest $\succ$-order of $\mathbf{r}=(r_i,\ldots, r_1)$ 
of non-zero coefficients $a_{\mathbf{r}}$ in  $\psi$ is smaller than for $\phi$. 

{\bf Step 4.}  Reduce the counter-example to the form 
$\sum_s \frac{\alpha_s}{p} \phi_{i_s}^{p^{r_s}}\odot P_s$.

If $\psi$ is not an integral polynomial 
we apply Steps 1-3 reducing the highest $\succ$-order of non-integral monomials
'in the first component'. 
In the end of this reducing procedure 
 we obtain $\phi - \sum_s \frac{\alpha_s}{p} \phi_{i_s}^{p^{r_s}}\odot P_s$, 
which is an integral polynomial in Chern classes
 and $\sum_s \frac{\alpha_s}{p} \phi_{i_s}^{p^{r_s}}\odot P_s$ has derivatives
  in all components which are integral as polynomials in Chern classes. 
Here $\alpha_s\in\Zp^\times$ and $i_s$ is not divisible by $p^n$.

Note that $\phi$ is an integral poly-operation by the assumpption of the Lemma,
and $\phi - \sum_s \frac{\alpha_s}{p} \phi_{i_s}^{p^{r_s}}\odot P_s$
is an integral polynomial in Chern classes, and therefore an integral poly-operation.
Thus a counter-example to the Lemma is reduced to the case of
 $\sum_{s; r_s} \frac{\alpha_s}{p} \phi_{i_s}^{p^{r_s}}\odot P_s$. 
Indeed, if it were integral, then $\phi$ would be integral as well.

{\bf Step 5.} Apply Steps 1-3 to other components of the poly-operation.

Reducing counter-example of the form $\sum_s \frac{\alpha_s}{p} \phi_{i_s}^{p^{r_s}}\odot P_s$
by the procedure above (applied not to the first component)
does not change expressions in the first component.

Thus, finally we get a counter-example of the form
$\sum_{\mathbf{i} = (i_1,i_2,\ldots,i_N); (r_1,\ldots, r_N)} \frac{\beta_{\mathbf{i}}}{p} 
\phi_{i_1}^{p^{r_1}}\odot \phi_{i_2}^{p^{r_2}} \odot \cdots \odot \phi_{i_N}^{p^{r_N}}$,
 where $\beta_{\mathbf{i}}\in\Zp^\times$ and $p^n \nmid i_l$ for all $1\le l \le N$.

{\bf Step 6.}
If this poly-operation is integral,
then multiplying it by $p$, we get a non-trivial relation in poly-operations modulo $p$:
$\sum_\mathbf{i} \beta_{\mathbf{i}} \phi_{i_1}^{p^{r_1}}\odot \phi_{i_2}^{p^{r_2}} 
\odot \cdots \odot \phi_{i_N}^{p^{r_N}}=0 \mod p$. 
Due to Corollary \ref{cr_polyadd} we have arrived at a contradiction and the Lemma is proved.
\end{proof}

\begin{Prop}\label{prop_chern_base}
Any integral poly-operation $\phi$ from $\tilde{K}(n)^*$ to $CH^*\ot\Zp$
 is uniquely expressible as an integral series in external products of Chern classes.
\end{Prop}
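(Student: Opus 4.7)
The plan is to combine the rational classification of Proposition~\ref{chern_polybase} with the integrality criterion of Lemma~\ref{polyderint} via an induction on a derivative-vanishing invariant. Uniqueness is immediate: two distinct integral series in external products of Chern classes map to distinct rational ones, so they are distinguished by Proposition~\ref{chern_polybase}. For existence, passing to graded components I reduce to an integral $N$-ary poly-operation $\phi_d$ in codimension $d$, which by Proposition~\ref{chern_polybase} is a $\QQ$-polynomial of total degree $d$ in the variables $\{c_i^{(j)}\}$; the goal is to show its $\QQ$-coefficients lie in $\Zp$.

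I would introduce the invariant $\operatorname{ord}(\phi_d) := \min\{s\ge 0 : \partial^{s+1}\phi_d = 0\}$ and induct on it, uniformly in $N$. This invariant is finite because Cartan's formula expresses the non-additive part of $c_l$ in terms of products of Chern classes of strictly smaller index, so iterated derivatives of a fixed-codimension polynomial eventually split every factor down to additive $c_i$'s (with $1\le i\le p^n-1$) and then vanish. The inductive step is a direct application of Lemma~\ref{polyderint}: if $\operatorname{ord}(\phi_d)=k\ge 1$, each first derivative $\partial_i \phi_d$ is an integral $(N+1)$-ary poly-operation with $\operatorname{ord}(\partial_i\phi_d)\le k-1$, hence by induction is an integral polynomial in external products of Chern classes, and Lemma~\ref{polyderint} yields the same conclusion for $\phi_d$.

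The hard part is the base case $\operatorname{ord}(\phi_d)=0$, where $\phi_d$ is poly-additive. Rationally, Proposition~\ref{chern_polybase} rewritten in the Chern-character basis (a monomial $\prod_j m_j(ch_i^{(j)})$ is poly-additive iff each $m_j$ is linear, because $ch$ is additive) together with the rational identification $ch_i\in\QQ\cdot\phi_i$ furnished by Proposition~\ref{add_rank} and property iiibis), give
\[
\phi_d \;=\; \sum_{\mathbf{i},\,\sum_j i_j=d} \alpha_\mathbf{i}\,\phi_{i_1}^{(1)}\odot\cdots\odot\phi_{i_N}^{(N)}, \qquad \alpha_\mathbf{i}\in\QQ.
\]
To upgrade $\alpha_\mathbf{i}$ to $\Zp$, I would argue by contradiction: if $m:=-\min_\mathbf{i}\nu_p(\alpha_\mathbf{i})>0$, then $p^m\phi_d$ is an integral poly-operation that vanishes modulo $p$, but reduces modulo $p$ to a nonzero $\F{p}$-combination of external products $\phi_{i_1}^{(1)}\odot\cdots\odot\phi_{i_N}^{(N)}$. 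This is forbidden by the mod-$p$ linear independence of such external products, obtained by combining Corollary~\ref{grad_modp} (each $\phi_j\bmod p$ is proportional to $\phi_v^{p^{nk}}\bmod p$ for $j=vp^{nk}$, $p^n\nmid v$) with Corollary~\ref{cr_polyadd} (the external products of the $\phi_v^{p^{nk}}$'s are $\F{p}$-linearly independent). The core obstacle is thus this base case: bridging the rational Kunneth decomposition to integrality via the mod-$p$ independence package.
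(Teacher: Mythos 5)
Your proposal is correct and follows essentially the same route as the paper: Proposition~\ref{chern_polybase} for the rational expression, induction on the order of derivative vanishing, and Lemma~\ref{polyderint} for the inductive step. The one genuine difference is your base case. The paper starts the decreasing induction at $s$ large enough that $\partial^s\phi=0$ (trivially integral); you instead bottom out at the poly-additive case and supply a separate mod-$p$ independence argument (via Corollary~\ref{cr_kn_add} and Corollary~\ref{cr_polyadd}). That argument is correct, but it is also redundant: when $\operatorname{ord}(\phi_d)=0$ all first derivatives are zero, hence trivially integral, and Lemma~\ref{polyderint} already delivers integrality of $\phi_d$, so the base case needs no special treatment. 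One small imprecision: your justification for finiteness of $\operatorname{ord}$ (``iterated derivatives split every factor down to additive $c_i$'s and then vanish'') is not quite the right mechanism --- e.g.\ $c_1^d$ already has all factors additive yet is not poly-additive. The correct reason, which the paper uses, is that $\phi_d$ is rationally a polynomial of degree $\le d$ in the additive $ch_j$'s, so $\partial^j\phi_d=0$ once $j>d$ because arity exceeds the degree available (equivalently, the value must be divisible by a product of first Chern classes of codimension exceeding $d$).
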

\begin{proof}
It is enough to work with polynomials (not series) of Chern classes, 
i.e. to prove that a poly-operation to a component of $CH^*$ is expressible 
as an integral polynomial in external products of Chern classes. 
Let $\phi$ be an integral $r$-ary poly-operation to $CH^n\ot\Zp$.

By Lemma \ref{chern_polybase} the operation $\phi$ and all its derivatives are 
uniquely expressible as rational polynomials in external products of Chern classes. 
We prove by decreasing induction on $s$ that all $s$-th derivatives are integral.

{\bf Base of induction ($s>>1$).}
Poly-operation $\phi$ can be written as a finite $\QQ$-linear combination
of external products of polynomials in components of the Chern
 character $ch_j$.

Recall that if $\psi$ is an additive operation,
then the $j$-th derivative of $\phi^N$ looks like: 
$\sum \binom{N}{r_1, r_2, \ldots, r_j} \phi^{r_1} \odot \phi^{r_2} \odot \cdots \odot \phi^{r_j}$.
In particular, for $j>N$ we have $\partial^j \phi^N =0$.

As the Chern character is additive, the same applies to polynomials
in it and external products in them. 

One could prove the base step directly from the COT using continuity of poly-operations.

{\bf Induction step ($s \rarr (s-1)$).} 
Suppose that all derivatives $\partial^s \phi$ are integral polynomials in products of Chern classes.
Then $(s-1)$-th derivatives  $\partial^{s-1} \phi$ are
 integral poly-operations for which Lemma \ref{polyderint}
is applicable. Therefore they are integral polynomials in external products of Chern classes.
\end{proof}

This finishes the proof of Theorem \ref{main}.


\end{document}